\def\lra{\longrightarrow}
\def\ind{\operatorname{ind}}
\def\Td{\operatorname{Td}}
\def\im{\operatorname{Im}}
\def\td{\operatorname{td}}
\def\tr{\operatorname{tr}}
\def\Mat{\operatorname{Mat}}
\def\End{\operatorname{End}}
\def\Vect{\operatorname{Vect}}
\def\dim{\operatorname{dim}}
\def\rk{\operatorname{rk}}
\def\Hom{\operatorname{Hom}}
\def\td{\mathrm{Todd}}
\def\Ch{\operatorname{Ch}}
\def\ch{\operatorname{ch}}
\def\Char{\operatorname{Char}}
\newtheorem{theorem}{Theorem}[section]
\newtheorem{lemma}{Lemma}[section]
\newtheorem{proposition}{Proposition}[section]
\theoremstyle{definition}
\newtheorem{remark}{Remark}[section]
\newtheorem{example}{Example}[section]
\newtheorem{definition}{Definition}[section]
\numberwithin{equation}{section}
\title{Index of elliptic operators for a diffeomorphism}
\author{Anton Savin and Boris Sternin}
\begin{document}

\maketitle

\begin{abstract}
We develop elliptic theory of operators associated with a diffeomorphism of a closed smooth manifold.
The aim of the present paper is to obtain an index formula for such operators in terms
of topological invariants of the manifold and of the symbol of the operator.  The symbol in this situation is an element of 
a certain crossed product. We express the index as the pairing of the class in K-theory defined by the symbol and the Todd class in periodic  cyclic cohomology of the crossed product. 
\end{abstract}



\tableofcontents

\section*{Introduction}
\addcontentsline{toc}{section}{Introduction}

Let $M$ be a smooth manifold and $g:M\to M$ be a diffeomorphism. We develop elliptic theory for operators of the form
\begin{equation}\label{eq-opers0}
    D=\sum_k D_kT^k:C^\infty(M)\lra C^\infty(M).
\end{equation}
Here $T$ is the shift operator $Tu(x)=u(g(x))$ along the orbits of  $g$, $D_k$ are pseudodifferential operators ($\psi$DO) on $M$, and the sum is assumed to be finite.

The aim of the present paper is to obtain an index formula for the operator   \eqref{eq-opers0} in terms
of topological invariants of the manifold and of the symbol of the operator. Precise definitions of all the objects will be given below but now let us note an important characteristic property of this theory.  Namely, the algebra of symbols of  operators 
\eqref{eq-opers0} is {\em not commutative}. More precisely, an explicit computation shows that the algebra of symbols is the crossed product   $C^\infty(S^*M)\rtimes \mathbb{Z}$ of the algebra of functions on the cosphere bundle by the action of the group $\mathbb{Z}$. This essentially means that we consider noncommutative elliptic theory.

Special  cases of operators~\eqref{eq-opers0} were considered by a number of authors (e.g., see \cite{Con1,NaSaSt17,SaSt24,SaSt25,SaSt29,Ant4,AnLe2,Per2,Per3}). In these papers, as a rule, certain conditions were imposed on the manifold $M$  and on the   diffeomorphism  $g$. For example, in the book   \cite{NaSaSt17} it is assumed that the  diffeomorphism is an isometry,  in \cite{AnLe2} the diffeomorphism is arbitrary but the manifold is one-dimensional, and so on. Further, we would   like to mention interesting papers   \cite{CoMo2} and \cite{Mos2}, where the authors study elliptic operators of the form   \eqref{eq-opers0} that are associated with the Dirac operator and conformal diffeomorphisms. Let us stress that in the present paper we consider an arbitrary compact smooth manifold and an arbitrary diffeomorphism   {\em without any restrictions.}

The main result of the paper is an explicit index formula for the elliptic operator  
 \eqref{eq-opers0}. More explicitly, the answer is given by the formula  
\begin{equation}\label{eq-f-la1}
 \ind D=(2\pi i)^{-n}\langle [\sigma(D)],{\rm Todd} (T^*_\mathbb{C}M)\rangle,\qquad \dim M=n,
\end{equation}
in terms of cyclic cohomology, where $[\sigma(D)]\in K_0(C^\infty(S^*M\times \mathbb{S}^1)\rtimes \mathbb{Z})$ is the class of symbol in $K$-theory, ${\rm Todd} (T^*_\mathbb{C}M)\in HP^{ev}(C^\infty(S^*M\times \mathbb{S}^1)\rtimes \mathbb{Z})$ is the Todd class in cyclic cohomology and the brackets $\langle,\rangle$ denote the pairing of   $K$-theory and cyclic cohomology.

Let us briefly describe the methods used in the present paper. It is clear more or less that a noncommutative elliptic theory requires a noncommutative apparatus: noncommutative differential forms, noncommutative trace, etc. Moreover, since the diffeomorphism generates an action of the group $\mathbb{Z}$,  the relevant topological invariants are naturally elements of the Haefliger cohomology group   $H^*(S^*M/\mathbb{Z})$ (see \cite{Haef1}).  In this framework, we define the Chern character and establish an important intermediate index formula (interesting in its own right)
as an integral of a Haefliger form over  $S^*M\times \mathbb{S}^1$.  After this, we reduce the obtained formula to the natural and elegant formula   \eqref{eq-f-la1}. The latter index formula can be considered as an analogue of the Atiyah--Singer formula in our situation.

We now  describe the contents of the paper. In the first section, we introduce a notion of ellipticity and prove  finiteness theorem. In the second section, we define Chern characters for crossed products, including   twisted Chern character,  and define  the Chern character of an elliptic symbol.  Section three is devoted to the solution of the equation
\begin{equation}\label{eq-chtd1}
 \ch y=\Td x,
\end{equation}
where $\Td x$ is the Todd class of a complex vector bundle $x$ and  $\ch$ is the Chern character.
The point here is that in our situation the Todd class is generally speaking undefined. However, it can be replaced by the Chern character of a bundle satisfying Eq.~\eqref{eq-chtd1}. Finally, in the fourth section we formulate an index theorem (in   Haefliger cohomology). The proof of the index theorem is given in Sections~ 5 and 6. Namely, in Sec.~5 we reduce our initial operator to a special boundary value problem on the cylinder  $M\times [0,1]$ (see \cite{Sav9,Sav10}), 
which is then reduced to a certain pseudodifferential operator on the torus of the original manifold twisted by the diffeomorphism   $g$  (cf. \cite{APS3,BBW2}). The index of the latter operator can be computed by the Atiyah--Singer formula. However, to give an index formula in terms of the original operator, we need to compare the index formula for the pseudodifferential operator on the torus and the formula announced in Sec.~4. So, the proof of the index formula for the original operator is complete, at least in the framework of Haefliger cohomology.  In Sec.~7 we interpret the index formula in Haefliger cohomology as an Atiyah--Singer formula in cyclic cohomology. Here we use equivariant characteristic classes  in cyclic cohomology   (see \cite{Gor1}). In the eighth section we give some remarks and consider an example.

This work was done at the Institute of Analysis of Leibniz University of Hannover during  our stay in the summer 2011. We are grateful to  Prof. Elmar Schrohe, in whose working group this research was done,  for attention   and excellent working conditions.  

\section{Ellipticity and finiteness theorem}

Let $M$ be a smooth closed manifold and  $g:M\to M$ be a diffeomorphism. Consider an operator of the form
\begin{equation}\label{eq-oper1}
    D=\sum_k D_kT^k: C^\infty(M,\mathbb{C}^N)\lra C^\infty(M,\mathbb{C}^N),
\end{equation}
where
$$
T: C^\infty(M )\to C^\infty(M ), \quad Tu(x)=u(g(x)),
$$
is the shift operator corresponding to $g$ and the coefficients  
$$
 D_k:C^\infty(M,\mathbb{C}^N)\lra C^\infty(M,\mathbb{C}^N)
$$
are pseudodifferential operators  ($\psi$DO) of order zero and the sum in  \eqref{eq-oper1} is finite.
Denote the principal symbols of the coefficients by  
$$
 \sigma(D_k)\in
C^\infty(S^*M,\Mat_N(\mathbb{C})).
$$
Here  $S^*M=T^*_0M/\mathbb{R}_+$ stands for the cosphere bundle with the projection  $\pi:S^*M\to M$, where $T^*_0M=T^*M\setminus 0$ is the cotangent bundle with the zero section deleted.
\begin{definition}
\emph{The symbol} of the operator $D$ is a collection $\sigma(D)=\{\sigma(D_k)\}$ of symbols of its coefficients.
\end{definition}
If
\begin{equation}
B=\sum B_kT^k:C^\infty(M,\mathbb{C}^N)\to C^\infty(M,\mathbb{C}^N)
\label{eqa1}
\end{equation}
is another operator of the form~\eqref{eq-oper1}, then the symbol of the composition of  
\eqref{eq-oper1} and  \eqref{eqa1} is determined by the formula
\begin{equation}\label{eq-compos1}
\sigma(DB)(k)=\sum_{l+m=k } \sigma(D_l)\left[(\partial
g)^{l*}\sigma(B_m)\right].
\end{equation}
The product of symbols in the right-hand side of  Eq.~ \eqref{eq-compos1} is called the crossed product of symbols. Here $\partial g=(dg^t)^{-1}:T^*M\to T^*M$ is the codifferential of  $g$.

\begin{definition}\label{def-ell-1}
An operator  $D$ is   \emph{elliptic} if there exists a  symbol   $\sigma(D)^{-1}$  with a finite number  of nonzero components such that  
$$
\sigma(D)\sigma(D)^{-1}=\mathbf{1},\quad \sigma(D)^{-1}\sigma(D)=\mathbf{1},
$$
where the product of symbols is defined by Eq.~\eqref{eq-compos1}, while 
 the symbol of the identity operator $Id=T^0$   is denoted by $\mathbf{1}$.
\end{definition}

The composition formula  \eqref{eq-compos1} readily implies the following finiteness theorem.
\begin{theorem}\label{th-finit1}
An elliptic operator \eqref{eq-oper1}  is Fredholm  in Sobolev spaces
$$
D:H^s(M,\mathbb{C}^N)\lra H^s(M,\mathbb{C}^N)
$$
for all  $s$, and its kernel and cokernel consist of smooth functions.  
\end{theorem}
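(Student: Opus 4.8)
The plan is to build a parametrix for $D$ out of the inverse symbol $\sigma(D)^{-1}$ furnished by ellipticity and then run the standard elliptic bootstrap. First I would choose, for each component $\sigma_k$ of $\sigma(D)^{-1}=\{\sigma_k\}$, a zero-order $\psi$DO $R_k:C^\infty(M,\mathbb{C}^N)\to C^\infty(M,\mathbb{C}^N)$ with principal symbol $\sigma_k$ (possible since $\sigma_k\in C^\infty(S^*M,\Mat_N(\mathbb{C}))$), and set $R=\sum_k R_kT^k$, a finite sum of the same type as $D$. Composing $D$ with $R$ and moving all shift operators to the right via $D_lT^l\,R_mT^m=D_l\,(T^lR_mT^{-l})\,T^{l+m}$, and using that $T^lR_mT^{-l}$ is again a zero-order $\psi$DO with principal symbol $(\partial g)^{l*}\sigma(R_m)$, one finds that $DR$ and $RD$ are operators of the form $\sum_k E_kT^k$ whose symbols are precisely the crossed products $\sigma(D)\sigma(D)^{-1}$ and $\sigma(D)^{-1}\sigma(D)$ computed by \eqref{eq-compos1}. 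By Definition~\ref{def-ell-1} these equal $\mathbf 1$, so $DR-Id$ and $RD-Id$ are finite sums $\sum_k E_kT^k$ in which every $E_k$ is a $\psi$DO of order $-1$.

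Next I would invoke two standard facts. The shift operator $T$, and hence each $T^k$, is bounded on every Sobolev space $H^s(M,\mathbb{C}^N)$ because $g$ is a diffeomorphism of the closed manifold $M$; and a $\psi$DO of order $-1$ maps $H^s$ continuously into $H^{s+1}$, so by the Rellich embedding it is compact as an operator on $H^s$. Since the relevant sums are finite, $DR-Id$ and $RD-Id$ are compact on $H^s(M,\mathbb{C}^N)$ for every $s$. Hence $R:H^s\to H^s$ is a two-sided parametrix for $D$ modulo compact operators, and $D:H^s(M,\mathbb{C}^N)\to H^s(M,\mathbb{C}^N)$ is Fredholm for all $s$.

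For the regularity statement I would argue by the usual bootstrap. If $Du=0$ with $u\in H^s$, then writing $RD=Id+K$ with $K$ compact and of order $-1$ (so $K:H^s\to H^{s+1}$), we get $u=-Ku\in H^{s+1}$, and iterating yields $u\in H^{s+m}$ for all $m$, hence $u\in C^\infty(M,\mathbb{C}^N)$ by Sobolev embedding. For the cokernel I would note that the formal adjoint $D^*=\sum_k T^{-k}D_k^*=\sum_k(T^{-k}D_k^*T^k)\,T^{-k}$ is again an operator of the form \eqref{eq-oper1}, and its symbol is the adjoint of $\sigma(D)$ in the crossed product, hence invertible; so $D^*$ is elliptic and, by the argument just given, $\ker D^*$ consists of smooth sections. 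Since $\operatorname{coker}(D:H^s\to H^s)\cong\ker(D^*:H^{-s}\to H^{-s})$, the cokernel of $D$ likewise consists of smooth functions, and in particular is independent of $s$.

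The only ingredient that is not completely routine is the behaviour of a $\psi$DO under conjugation by the diffeomorphism, $P\mapsto T^lPT^{-l}$: one needs that this is still a zero-order $\psi$DO whose principal symbol transforms by the codifferential as $(\partial g)^{l*}\sigma(P)$. This is exactly the local computation underlying the composition formula \eqref{eq-compos1}, which the text takes as established; granting it, the remainder is the standard parametrix argument. A minor but essential bookkeeping point is that all sums occurring here are finite, so no convergence questions for the symbolic calculus arise and the compactness of the remainders is immediate.
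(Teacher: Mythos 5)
Your argument is correct and follows the same approach as the paper: build a parametrix $R$ by quantizing the inverse symbol $\sigma(D)^{-1}$, observe via the composition formula \eqref{eq-compos1} that $DR-Id$ and $RD-Id$ are finite sums $\sum_k E_kT^k$ with each $E_k$ of order $-1$ hence compact, and conclude Fredholmness. The paper's proof states this in two sentences; you have merely filled in the standard details (compactness of the remainder, bootstrap for the kernel, the adjoint argument for the cokernel), all of which are what the paper's phrase ``a direct computation'' is pointing at.
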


\begin{proof} Indeed, since $D$ is elliptic, the inverse symbol $\sigma(D)^{-1}$ has finitely many nonzero components. Denote by $D^{-1}$ an arbitrary operator with symbol equal to $\sigma(D)^{-1}$.
Then a direct computation shows that   $D^{-1}$  is an inverse of $D$ modulo operators of negative order.
\end{proof}

\begin{remark}
The set of all symbols  is an algebra with respect to the product \eqref{eq-compos1}. This algebra is actually the algebra of matrices, whose entries are elements of the crossed product    (e.g., see \cite{Zell1})
of the algebra  $C^\infty(S^*M)$ of smooth functions on $S^*M$ and the group $\mathbb{Z}$. 
The latter algebra is denoted by   $C^\infty(S^*M)\rtimes \mathbb{Z}$.  In the present paper, we only consider algebraic crossed products, whose elements have at most a finite number of nonzero components.  
\end{remark}

\section{Chern characters for crossed products}\label{sec-3}

\subsection{Chern character}\label{sec-2-1}

Let $g:X\to X$ be a diffeomorphism of a smooth closed manifold   $X$ and $E\in \Vect(X)$ be a vector bundle.

We recall some facts of the theory of noncommutative differential forms 
(e.g., see \cite{Con1,Con8,NaSaSt17}).

\paragraph{Noncommutative differenatial forms.}

Let $\Lambda(X)$ be the algebra of differential forms on $X$ with smooth coefficients.
Following \cite{NaSaSt17}, we define the space $\Lambda(X,\End E)_{\mathbb{Z}}$ of {\em noncommutative forms} on $X$. This space consists of finite sequences
$$
 a=\{a(k)\},\qquad a(k)\in\Lambda(X)\otimes \Hom({g^k}^*E,E),\quad \deg a=\max_k \deg a(k),
$$
which we  represent as operators
$$
 a=\sum_k a(k)T^k:\Lambda(X,E)\to \Lambda(X,E),
$$
where for  $\omega\in \Lambda(X,E)$ we set $T\omega:=g^*\omega\in \Lambda(X,g^*E)$.
This operator interpretation endowes   the space   $\Lambda(X,\End E)_{\mathbb{Z}}$ with the algebra structure.
Namely, the product of two forms  $a=\sum_k a(k)T^k,b =\sum_l b(l)T^l $ is the form
$$
ab=\sum_{k,l} a(k) {g^k}^*(b(l))T^{k+l}.
$$
The subalgebra of forms of zero degree is denoted by   $C^\infty(X,\End E)_{\mathbb{Z}}$.

\begin{remark}
For a trivial bundle  $\mathbf{1}_n$ of rank $n$,  we have
$$
\Lambda (X,\End \mathbf{1}_n)_{\mathbb{Z}}\simeq \Lambda (X,\Mat_n(\mathbb{C}))\rtimes
{\mathbb{Z}}.
$$
More generally, suppose we are given a  $g$-bundle $E$. This means that the mapping   $g:X\to X$ is extended   to a fiberwise-linear mapping $\widetilde{g}=\alpha g^*:E\to E$, where $\alpha:g^*E\to E$  is an isomorphism of vector bundles.
In this case we have an isomorphism of algebras  
$$
\begin{array}{ccc}
  \Lambda(X,\End E)_\mathbb{Z} & \simeq  & \Lambda(X,\End E )\rtimes \mathbb{Z},\vspace{2mm}\\
  \sum_k a(k) T^k  & \longmapsto & \sum_k \bigl[a(k)T^k  (\alpha T)^{-k}\bigr]\widetilde{T}^k.
\end{array}
$$
Here
\begin{equation}
\widetilde{T}=\alpha T:\Lambda(X,E)\to \Lambda(X,E)
\label{eqa2}
\end{equation}
is the action of the shift operator on the sections of $E$, while $\Lambda(X,\End E )\rtimes \mathbb{Z}$ is  the 
crossed product for the shift operator \eqref{eqa2}.
\end{remark}

\paragraph{Graded trace.} 
We define a graded trace on noncommutative forms   ranging in Haefliger forms on the manifold.
To this end, we first recall necessary facts about Haefliger forms and cohomology (see \cite{Haef1}).
In the de Rham complex $(\Lambda(X),d)$, consider the subcomplex
$((1-g^*)\Lambda(X),d)$.
\begin{definition}
\emph{The space of Haefliger forms} on $X$ is the quotient space $\Lambda(X)/(1-g^*)\Lambda(X)$ and is  denoted
by $\Lambda(X/\mathbb{Z})$.
The cohomology of the quotient complex $(\Lambda(X)/(1-g^*)\Lambda(X),d)$  is  \emph{Haefliger cohomology }of $X$ with respect to the diffeomorphism $g$ and is denoted by $H(X/\mathbb{Z})$.
\end{definition}

\begin{example}
It is clear from the definition, that Haefliger forms are automatically $g$-invariant. Moreover, if $g^N=Id$, then the spectral decomposition with respect to $g$ shows that the Haefliger complex is isomorphic to the complex $(\Lambda(X)^g,d)$ of $g$-invariant forms. Therefore, Haefliger cohomology in this case is isomorphic to the cohomology of the quotient
  $X/\mathbb{Z}_N$ of $X$ by the action of the group generated by the diffeomorphism $g$. This gives an explanation for our notation for Haefliger cohomology.
\end{example}

Consider the mapping
\begin{equation}
\label{trace1}
\begin{array}{ccc}
 \tau_E:\Lambda(X,\End E)_\mathbb{Z} & \longrightarrow &  \Lambda (X/\mathbb{Z}), \vspace{1mm}\\
 \sum_k\omega(k)T^k & \longmapsto & \tr_E(\omega(0)),
\end{array}
\end{equation}
where $\Lambda(X/\mathbb{Z})$ is the space of Haefliger forms on $X$, while $\tr_E:\Lambda(X,\End E) \to \Lambda(X)$ 
is the trace of an endomorphism of $E$.

The mapping \eqref{trace1} for the trivial bundle   $E=X\times \mathbb{C}^n$  will be denoted simply by $\tau$.
\begin{lemma}
The mapping  \eqref{trace1} is a graded trace on the algebra
$\Lambda(X,\End E)_\mathbb{Z}$, i.e., 
$$
\tau_E\bigl([\omega_1,\omega_2]\bigr)=0,\quad \text{whenever }\omega_1,\omega_2\in \Lambda(X,\End E)_\mathbb{Z}
$$
where $[,]$ stands for the supercommutator
$$
[\omega_1,\omega_2]=\omega_1\omega_2-(-1)^{\deg \omega_1 \deg
\omega_2}\omega_2\omega_1.
$$
\end{lemma}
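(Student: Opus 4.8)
The plan is to verify the graded trace property by a direct computation, reducing it to the fact that the ordinary fiberwise trace $\tr_E$ vanishes on supercommutators of $\End E$-valued forms together with the defining relation that Haefliger forms are taken modulo $(1-g^*)\Lambda(X)$. First I would take two homogeneous noncommutative forms $\omega_1=\sum_k a(k)T^k$ and $\omega_2=\sum_l b(l)T^l$ and write out both products using the multiplication rule $\omega_1\omega_2=\sum_{k,l}a(k)\,{g^k}^*(b(l))\,T^{k+l}$. Applying $\tau_E$ to $\omega_1\omega_2$ picks out the total degree-zero component $T^0$, i.e. the terms with $l=-k$, so $\tau_E(\omega_1\omega_2)=\sum_k \tr_E\bigl(a(k)\,{g^k}^*(b(-k))\bigr)$, and similarly $\tau_E(\omega_2\omega_1)=\sum_k \tr_E\bigl(b(-k)\,{g^{-k}}^*(a(k))\bigr)$ after reindexing.

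Next I would compare the two sums term by term for a fixed $k$. Using that $\tr_E$ kills graded commutators of endomorphism-valued forms on $X$ (the standard graded trace property of the fiberwise trace, valid once one is careful that $a(k)$ is a section of $\Hom({g^k}^*E,E)$ and ${g^k}^*(b(-k))$ of $\Hom(E,{g^k}^*E)$, so the composite is an endomorphism of $E$), we get
\begin{equation*}
\tr_E\bigl(a(k)\,{g^k}^*(b(-k))\bigr)=(-1)^{\deg a(k)\deg b(-k)}\,\tr_E\bigl({g^k}^*(b(-k))\,a(k)\bigr).
\end{equation*}
Now ${g^k}^*(b(-k))\cdot a(k)$ is an endomorphism of ${g^k}^*E$, and its fiberwise trace is a form on $X$; the key identity is that $\tr_{{g^k}^*E}\bigl({g^k}^*(\beta)\cdot{g^k}^*(\gamma)\bigr)={g^k}^*\bigl(\tr_E(\beta\gamma)\bigr)$ — naturality of the trace under pullback — after rewriting $a(k)$ as ${g^k}^*$ of the corresponding $\Hom(E,{g^{-k}}^*E)$-valued form, which matches exactly the $k$-th term of $\tau_E(\omega_2\omega_1)$ up to the pullback ${g^k}^*$.

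Finally, I would assemble the difference $\tau_E(\omega_1\omega_2)-(-1)^{\deg\omega_1\deg\omega_2}\tau_E(\omega_2\omega_1)$ and observe that each term is of the form $\eta(k)-{g^k}^*\eta(k)$ for a suitable form $\eta(k)$ on $X$, hence lies in $(1-g^*)\Lambda(X)$ and is therefore zero in $\Lambda(X/\mathbb{Z})$. The grading sign $(-1)^{\deg\omega_1\deg\omega_2}$ should come out correctly because only the $T^0$-component contributes and on that component $\deg a(k)=\deg\omega_1$, $\deg b(-k)=\deg\omega_2$ for the surviving terms (or one treats inhomogeneous forms by bilinearity). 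The main obstacle I anticipate is purely bookkeeping: tracking the $\Hom$-bundle types of the $a(k)$ and $b(l)$ through the pullbacks so that every trace written down is genuinely a trace of an endomorphism of a single bundle, and checking that the telescoping into $(1-g^*)\Lambda(X)$ is exact rather than merely up to exact forms — here it is an honest equality of forms, not just a cohomological statement, which is what makes $\tau_E$ a graded trace at the chain level.
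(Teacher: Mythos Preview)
Your proposal is correct and follows essentially the same route as the paper: reduce by bilinearity to monomials $a(k)T^k$ and $b(-k)T^{-k}$, use the graded trace property of the fiberwise trace to swap the two factors, and then invoke naturality of the trace under pullback so that the two sides differ by a pullback $g^{k*}$, hence agree in $\Lambda(X/\mathbb{Z})$. The paper writes this as a single chain of equalities valid in the Haefliger quotient (pulling back by $g^{-k*}$ rather than your $g^{k*}$, which is immaterial), while you phrase it as showing the difference lies in $(1-g^*)\Lambda(X)$; the only point you leave implicit is the elementary observation that $(1-g^{k*})\Lambda(X)\subset (1-g^*)\Lambda(X)$ for every $k\in\mathbb{Z}$, via the factorization $1-g^{k*}=(1-g^*)(1+g^*+\cdots+g^{(k-1)*})$ and its negative-$k$ analogue.
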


\begin{proof}
The proof is straightforward:
\begin{multline}
\tau_E(\omega_1 T^k \omega_2T^{-k})=\tr_E( \omega_1g^{k*}(\omega_2))=
\tr_{g^{-k*}E} \bigl(g^{-k*}(\omega_1g^{k*}(\omega_2))\bigr)=\\
\tr_{g^{-k*}E} ((g^{-k*}\omega_1)\omega_2)=(-1)^{\deg \omega_1 \deg \omega_2}\tau_E(\omega_2 T^{-k}\omega_1 T^k).
\end{multline}
Here $\omega_1\in\Lambda(X,\Hom(g^{k*}E,E))$ and $\omega_2\in\Lambda(X,\Hom(g^{-k*}E,E))$ . 
The first and the last equalities follow from the definition
of the product of noncommutative forms; the second equality follows from the properties of Haefliger forms; the third equality follows from the properties of the induced mapping.  
\end{proof}

\paragraph{Noncommutative connection and curvature form.}
We choose a connection in $E$
$$
\nabla_E:\Lambda (X,E)\longrightarrow \Lambda (X,E).
$$

Given a projection $p\in C^\infty(X,\End E)_\mathbb{Z}$, we define a differential operator of order one  
\begin{equation}\label{nabbla}
\nabla:=p\nabla_E p:\Lambda (X,E)\longrightarrow \Lambda (X,E).
\end{equation}
A {\em noncommutative connection} for a projection $p$ is the sum
$$
p(\nabla_E +\omega)p
$$
of operator \eqref{nabbla} and an operator of multiplication by $p\omega p$, where 
$\omega\in\Lambda^1(X,\End E)_\mathbb{Z}$.
\begin{lemma}
\label{lemA} For a noncommutative connection  $\nabla$ is one has an equality 
$$
d\tau_E(A)=\tau_E\left([\nabla,A]\right),
$$
whenever   $A\in \Lambda (X,\End E)_\mathbb{Z}$ is  such that $pA=A=Ap$.
\end{lemma}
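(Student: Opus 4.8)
The plan is to deduce the identity from two ingredients: the classical fact that the fibrewise trace on $\Lambda(X,\End E)$ turns the covariant derivative into the de Rham differential, and the cyclicity of the graded trace $\tau_E$ just established. Throughout I write $\delta A:=[\nabla_E,A]=\nabla_E A-(-1)^{\deg A}A\nabla_E$ for the graded commutator with the honest connection.

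First I would treat the case $\nabla=\nabla_E$ (i.e. $p=\mathbf{1}$, $\omega=0$), where the hypothesis on $p$ is vacuous. The point is that $\nabla_E$ intertwines $T^k$ with the pullback connection $g^{k*}\nabla_E$, so that for $A=\sum_k a(k)T^k$ the first-order part of the commutator cancels and $\delta A=\sum_k\bigl(\nabla^{(k)}a(k)\bigr)T^k$ lies again in $\Lambda(X,\End E)_\mathbb{Z}$, where $\nabla^{(k)}$ is the covariant exterior derivative on $\Lambda(X)\otimes\Hom(g^{k*}E,E)$ induced by $\nabla_E$; in particular $\delta$ is a graded derivation of degree one, and the $T^0$-component of $\delta A$ is $\nabla^{(0)}a(0)$. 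In a local frame $\nabla^{(0)}s=ds+[\Gamma,s]$ with $\tr_E[\Gamma,s]=0$, so $\tr_E(\nabla^{(0)}a(0))=d\,\tr_E(a(0))$, which is exactly $\tau_E(\delta A)=d\tau_E(A)$; note that this holds for \emph{every} $A$.

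For a general noncommutative connection write $\nabla=p\nabla_E p+p\omega p$ as an operator. Since $p\omega p\in\Lambda^1(X,\End E)_\mathbb{Z}$, the bracket $[p\omega p,A]$ is a genuine supercommutator in the algebra, so $\tau_E([p\omega p,A])=0$ by the graded-trace lemma. For the remaining term, using only $pA=A=Ap$, $p^2=p$ and the Leibniz rule for $\delta$, one finds
\[
[p\nabla_E p,A]=p\,\delta A-(-1)^{\deg A}A\,\delta p .
\]
Multiplying $\delta(p^2)=\delta p$ on the left by $p$ gives $p\,\delta p\,p=0$; combined with $A=pAp$ and cyclicity of $\tau_E$ this forces $\tau_E(A\,\delta p)=\tau_E(\delta p\,A)=0$. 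Since $\delta(Ap)=\delta A$, one also has $\tau_E(p\,\delta A)=\tau_E(\delta A\,p)=\tau_E(\delta A)-(-1)^{\deg A}\tau_E(A\,\delta p)=\tau_E(\delta A)$. Putting these together with the first step,
\[
\tau_E([\nabla,A])=\tau_E([p\nabla_E p,A])=\tau_E(\delta A)=d\tau_E(A).
\]

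The only genuinely delicate point is the first step: checking that the commutator with the \emph{differential operator} $\nabla_E$ stays inside the algebra of noncommutative forms (the first-order part must cancel, leaving the tensorial covariant derivative), and keeping the signs straight through the pullback connections $g^{k*}\nabla_E$. Once that is in hand the rest is bookkeeping with the Leibniz rule, the identities $p\,\delta p\,p=0$ and $A=pAp$ which annihilate the $\delta p$-terms, and cyclicity of $\tau_E$; the hypothesis $pA=A=Ap$ is used precisely there.
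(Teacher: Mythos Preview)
Your argument is correct. The paper's proof is organized differently, however. Like you, it first disposes of the $p\omega p$ term by the graded-trace property, reducing to $\nabla=p\nabla_E p$. But instead of establishing the base case $p=\mathbf{1}$ via the local identity $\tr_E(\nabla^{(0)}a(0))=d\,\tr_E a(0)$, the paper embeds $E$ as a subbundle of a trivial bundle $X\times\mathbb{C}^n$, extends $\nabla_E$ to a split connection $\nabla_{\mathbb{C}^n}$ on $\mathbb{C}^n$, and then invokes independence of the connection \emph{again} (now inside the trivial bundle) to replace $\nabla_{\mathbb{C}^n}$ by the flat connection $d$. With $d$ the identity becomes the two-line computation $\tau([p\,d\,p,A])=\tau(p\,dA)=\tau(dA)=d\tau(A)$, using only the relations obtained by differentiating $Ap=A=pA$. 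Your route is more intrinsic---no embedding is needed, and along the way you obtain the stronger intermediate statement $\tau_E([\nabla_E,A])=d\tau_E(A)$ for \emph{every} $A$---at the price of the Christoffel-symbol verification. The paper's route trades that local computation for the embedding trick, so that everything is ultimately reduced to the exterior differential on a trivial bundle.
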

\begin{proof}
1. Since  $\tau_E$ is a graded trace, we see that the right-hand side of the equality does not depend on the choice of   $\nabla$. Therefore, below we assume that $\nabla$ is defined as in   \eqref{nabbla}.

2. For a trivial bundle with the trivial connection  $\nabla=p \cdot d\cdot p$ we obtain
$$
\tau\left([p\cdot d\cdot p,A]\right)=\tau(pdp A+pdA-dp
A)=\tau(pdA)=\tau(dA)=d\tau(A).
$$
(Here we use the identities $(dA) p+A(dp)=dA$ and $(dp)A+pdA=dA$, which are obtained by differentiation of  $Ap=A=pA$.)

3. Let us realize a nontrivial vector bundle $E$ as a subbundle in the trivial bundle  $X\times \mathbb{C}^n$.  
Then the left-hand side of the desired equality is equal to  
$$
d\tau_E(A)=d\tau (A),
$$
whereas the right-hand side is equal to
$$
\tau \left([p\nabla_E p,A]\right)=
\tau \left([p\nabla_{\mathbb{C}^n} p,A]\right)=
\tau \left([p\cdot d\cdot p,A]\right)=d\tau (A).
$$
(In the trivial bundle  $X\times \mathbb{C}^n$ we choose a connection $\nabla_{\mathbb{C}^n}$ equal to the direct sum
of the  connection in $E$ and some connection in its orthogonal complement.)
\end{proof}

\begin{proposition}
\label{lemB}  For any noncommutative connection  $\nabla$ the operator
$$
 \nabla^2:\Lambda (X, E)\longrightarrow \Lambda (X,E)
$$
is an operator of multiplication by a $2$-form.  This form is denoted by
\begin{equation}\label{eqa3}
\Omega\in\Lambda^2(X,\End E)_\mathbb{Z}.
\end{equation}
and is called the  {curvature form of the noncommutative connection $\nabla$.}
\end{proposition}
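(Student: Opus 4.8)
The plan is to treat the projection $p$ as the operator of left multiplication it induces on $\Lambda(X,E)$, to abbreviate $\widetilde\nabla:=\nabla_E+\omega:\Lambda(X,E)\to\Lambda(X,E)$ (so that the noncommutative connection is $\nabla=p\widetilde\nabla p$), and then to simplify
\[
\nabla^{2}=(p\widetilde\nabla p)(p\widetilde\nabla p)=p\widetilde\nabla p\widetilde\nabla p
\]
(the second equality uses $p^{2}=p$) by repeatedly commuting each $\widetilde\nabla$ past a $p$. Each such commutation produces a genuine multiplication operator, and idempotency of $p$ makes the leftover ``derivative'' terms cancel, so that what remains is multiplication by an element of $\Lambda^{2}(X,\End E)_{\mathbb{Z}}$.

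Before the main computation I would record two purely formal facts. \emph{(i) A Leibniz rule.} One introduces the exterior covariant derivative $d^{\nabla}$ on $\Lambda(X,\End E)_{\mathbb{Z}}$: on a component $a(k)\in\Lambda(X)\otimes\Hom({g^{k}}^{*}E,E)$ it is given by $\nabla_E$ on the factor $E$ and by the pullback connection ${g^{k}}^{*}\nabla_E$ on the factor ${g^{k}}^{*}E$. A short check (using the ordinary Leibniz rule for $\nabla_E$ and the identity ${g^{k}}^{*}\nabla_E\circ{g^{k}}^{*}={g^{k}}^{*}\circ\nabla_E$) shows that $d^{\nabla}$ is a graded derivation of the algebra $\Lambda(X,\End E)_{\mathbb{Z}}$ and that, for every homogeneous $a$ in this algebra, the graded commutator of operators on $\Lambda(X,E)$ satisfies $[\nabla_E,a]=d^{\nabla}a$. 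Since $\omega$ itself acts by multiplication, $[\widetilde\nabla,a]=d^{\nabla}a+[\omega,a]$ is again multiplication by an element of $\Lambda^{\deg a+1}(X,\End E)_{\mathbb{Z}}$; in particular
\[
\widetilde\nabla\, p=p\,\widetilde\nabla+\theta,\qquad \theta:=d^{\nabla}p+[\omega,p]\in\Lambda^{1}(X,\End E)_{\mathbb{Z}},
\]
\[
\widetilde\nabla^{2}=\nabla_E^{2}+d^{\nabla}\omega+\omega^{2}=:\widetilde\Omega\in\Lambda^{2}(X,\End E)_{\mathbb{Z}},
\]
the last being the curvature of the ``ambient'' connection $\widetilde\nabla$ (here $\nabla_E^{2}$ is the ordinary curvature $2$-form of $\nabla_E$). \emph{(ii) Idempotency.} Applying the graded derivation $d^{\nabla}+[\omega,\cdot\,]$ to $p=p^{2}$ gives $\theta=\theta p+p\theta$; substituting this into $p\,\theta\,p$ and using $p^{2}=p$ yields $p\,\theta\,p=2\,p\,\theta\,p$, hence $p\,\theta\,p=0$.

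Granting (i) and (ii), the computation is short: using $\widetilde\nabla p=p\widetilde\nabla+\theta$ twice and $p^{2}=p$,
\begin{multline*}
\nabla^{2}=p\widetilde\nabla p\widetilde\nabla p
=p\bigl(p\widetilde\nabla+\theta\bigr)\widetilde\nabla p
=p\widetilde\nabla^{2}p+p\,\theta\,\widetilde\nabla p \\
=p\widetilde\Omega p+p\,\theta\bigl(p\widetilde\nabla+\theta\bigr)
=p\widetilde\Omega p+(p\,\theta\,p)\widetilde\nabla+p\,\theta^{2}
=p\widetilde\Omega p+p\,\theta^{2}.
\end{multline*}
Both $p\widetilde\Omega p$ and $p\,\theta^{2}$ lie in $\Lambda^{2}(X,\End E)_{\mathbb{Z}}$, so $\nabla^{2}$ is the operator of multiplication by the $2$-form $\Omega:=p\,\widetilde\Omega\,p+p\,\theta^{2}$, as claimed; one also checks $p\Omega p=\Omega$, in accordance with $\nabla=p\nabla p$. (In the classical situation $g=\mathrm{id}$, $E$ trivial, $\nabla_E=d$, $\omega=0$, this specializes to the familiar $\nabla^{2}=p(dp)^{2}p$.)

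The only step with real content is fact (i): one must define $d^{\nabla}$ on the twisted algebra $\Lambda(X,\End E)_{\mathbb{Z}}$, verify the graded Leibniz rule $d^{\nabla}(ab)=(d^{\nabla}a)\,b+(-1)^{\deg a}a\,(d^{\nabla}b)$ — which is where the bookkeeping of the pullback connections ${g^{k}}^{*}\nabla_E$ on the various $\Hom({g^{k}}^{*}E,E)$ and the signs has to be done carefully — and confirm $[\nabla_E,a]=d^{\nabla}a$. After that, everything above is a one-line manipulation in which idempotency does all the cancelling.
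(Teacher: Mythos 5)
Your proof is correct, and it takes a somewhat different route from the paper's. The paper first embeds $E$ as a subbundle of a trivial bundle $X\times\mathbb{C}^n$, extends $\nabla$ to a noncommutative connection of the form $d+\omega$ on the ambient trivial bundle, and then computes $(p(d+\omega)p)^2u$ by hand for $u=pu$, invoking $u=pu$ and the identities obtained by differentiating $p^2=p$ to simplify. The embedding step is precisely what lets the paper sidestep the issue you flag as ``the only step with real content'': once the ambient connection is $d$, the graded-derivation property on the twisted algebra is immediate and no separate $d^{\nabla}$ needs to be set up. Your proof instead works intrinsically on $E$, introduces the exterior covariant derivative $d^{\nabla}$ on $\Lambda(X,\End E)_{\mathbb{Z}}$, verifies the Leibniz rule and the operator identity $[\nabla_E,a]=d^{\nabla}a$ (using $g^{k*}\circ\nabla_E=(g^{k*}\nabla_E)\circ g^{k*}$), and then isolates the single algebraic fact $p\theta p=0$ that drives the cancellation. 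This is a cleaner structural packaging — it gives the explicit closed form $\Omega=p\widetilde\Omega p+p\theta^2$ and makes the role of idempotency transparent — at the cost of a bit more setup. The paper's reduction is more elementary and avoids formalizing $d^{\nabla}$; your argument is more invariant and generalizes without an embedding. Either is acceptable, and both correctly yield that $\nabla^2$ is multiplication by a $2$-form in $\Lambda^2(X,\End E)_{\mathbb{Z}}$.
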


\begin{proof}Let us embed $E$ as a subbundle of the trivial bundle $X\times\mathbb{C}^n$. 
Then the direct sum of the noncommutative connection for   $p$, and some noncommutative connection
for  $1-p$ is a noncommutative connection of the form  $\nabla_{\mathbb{C}^n}=d+\omega$, where
$\omega$ is a matrix-valued noncommutative   $1$-form on $X$.

Given a section $u=pu$, a direct computation enables us to compute the curvature form
\begin{multline*}
(p\nabla p)^2u=(p\nabla_{\mathbb{C}^n}p)^2u=(p(d+\omega)p)^2u=(p(d+\omega))^2u=\\
=(pdpdp-\omega dp+\omega p\omega+dp\omega+pd\omega)u.
\end{multline*}
\end{proof}

\paragraph{Chern character.}

\begin{definition}\label{chern1}
\emph{The Chern character form} of a projection $p\in C^\infty(X,\End E)_\mathbb{Z}$ is   the Haefliger form
$$
\ch  p:=\tau_E\left(p\exp \left(-\frac \Omega{2\pi i}\right)\right)\in
 \Lambda^{ev}(X/\mathbb{Z}),
$$
where $\Omega$ is the curvature form  \eqref{eqa3}.
\end{definition}

\begin{proposition}
The form $\ch  p$ is closed and its Haefliger cohomology class does not depend on the choice of a noncommutative connection and is determined by the class of projection  $p$ in the group $K_0(C^\infty(X,\End E)_\mathbb{Z})$.
\end{proposition}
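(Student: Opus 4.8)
The plan is to prove the three assertions --- closedness of $\ch p$, independence of the noncommutative connection, and dependence only on the $K_0$-class --- by the standard transgression argument, adapted to the present noncommutative/Haefliger setting using Lemma \ref{lemA} as the sole nontrivial input. First I would establish closedness. Write $\alpha=p\exp(-\Omega/2\pi i)$. One checks directly from the definition of a noncommutative connection that $[\nabla,p]=\nabla p-p\nabla$ vanishes on the range of $p$ in the appropriate sense, or more precisely that $p\Omega p=\Omega$ (immediate from $\nabla=p\nabla_E p$ and $\nabla^2=\Omega$), so that $p$ commutes with $\Omega$ and hence with $\exp(-\Omega/2\pi i)$ after multiplication by $p$; thus $\alpha$ satisfies $p\alpha=\alpha=\alpha p$, which is the hypothesis needed to apply Lemma \ref{lemA}. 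Then $d\,\ch p=d\tau_E(\alpha)=\tau_E([\nabla,\alpha])$. Now $[\nabla,\alpha]=[\nabla,p\exp(-\Omega/2\pi i)]$, and using the Bianchi identity $[\nabla,\Omega]=0$ (which follows from $\nabla^3=\nabla\cdot\nabla^2=\nabla^2\cdot\nabla$ together with $\nabla^2$ being multiplication by $\Omega$) one gets $[\nabla,\exp(-\Omega/2\pi i)]=0$, so $[\nabla,\alpha]$ reduces to $[\nabla,p]\exp(-\Omega/2\pi i)$. A short computation with $\nabla=p\nabla_E p$ shows $[\nabla,p]$ lands in the off-diagonal part killed by $\tau_E$ after multiplication by $\exp(-\Omega/2\pi i)$; more cleanly, one observes $\tau_E([\nabla,\alpha])=0$ because $\alpha$ is a ``function'' of $\nabla^2$ times $p$ and $\tau_E$ is a graded trace, so $d\,\ch p=0$.

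Next, independence of the connection. Given two noncommutative connections $\nabla_0,\nabla_1$ for the same projection $p$, form the path $\nabla_t=(1-t)\nabla_0+t\nabla_1=\nabla_0+t\beta$ with $\beta=\nabla_1-\nabla_0$ an operator of multiplication by a $1$-form satisfying $p\beta p=\beta$. Each $\nabla_t$ is again a noncommutative connection with curvature $\Omega_t$, and $\frac{d}{dt}\Omega_t=[\nabla_t,\beta]$. The plan is to compute $\frac{d}{dt}\ch p$ along the path: differentiating $\tau_E(p\exp(-\Omega_t/2\pi i))$ and using the graded-trace property to cyclically permute, one obtains $\frac{d}{dt}\ch p=-\frac{1}{2\pi i}\tau_E([\nabla_t,\beta]\exp(-\Omega_t/2\pi i))$ up to sign and combinatorial bookkeeping with the Duhamel expansion of the exponential --- this is exactly the point where one needs $[\nabla_t,\Omega_t]=0$ so that all the terms from differentiating $\exp(-\Omega_t/2\pi i)$ collapse. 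Then Lemma \ref{lemA} applied to $A=\beta\exp(-\Omega_t/2\pi i)$ shows this equals $-\frac{1}{2\pi i}d\,\tau_E(\beta\exp(-\Omega_t/2\pi i))$, an exact Haefliger form. Integrating in $t$ gives $\ch p|_{\nabla_1}-\ch p|_{\nabla_0}=d(\text{transgression form})$, so the Haefliger cohomology class is unchanged.

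Finally, dependence only on the $K_0$-class. Two projections $p_0,p_1\in C^\infty(X,\End E)_{\mathbb{Z}}$ define the same class in $K_0$ iff, after stabilizing (adding trivial summands, which add a constant --- namely $\ch$ of the added trivial projection --- that is the same on both sides, or rather iff they become conjugate by a unitary / homotopic through projections), they are connected by a smooth path $p_t$ of projections, possibly in a larger matrix algebra. Stabilization is harmless because $\tau_{E\oplus E'}$ restricted to the $E$-block is $\tau_E$ and the Chern character is additive over direct sums. For a smooth path $p_t$ one can use the standard trick: there is a path of invertibles $u_t$ with $u_t p_0 u_t^{-1}=p_t$ (solve $\dot u_t=[\dot p_t,p_t]u_t$), and $\ch(p_t)$ is the pullback of $\ch(p_0)$ under the corresponding gauge transformation of connections; since conjugating a noncommutative connection for $p_0$ by $u_t$ yields a noncommutative connection for $p_t$, and $\tau_E$ is conjugation-invariant (being a trace), $\ch(p_t)=\ch(p_0)$ as Haefliger forms for a suitable choice of connection, hence the classes agree by the connection-independence just proved.

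The main obstacle I expect is purely bookkeeping rather than conceptual: correctly handling the Duhamel/Leibniz expansion of $d$ and $\frac{d}{dt}$ applied to $\exp(-\Omega/2\pi i)$ in the noncommutative graded setting, keeping track of Koszul signs, and verifying that the graded-trace identity $\tau_E([\,\cdot\,,\,\cdot\,])=0$ together with $[\nabla,\Omega]=0$ does make all the unwanted terms cancel so that only a total $d$ of a Haefliger form survives. The genuinely noncommutative feature --- that $\Omega$ is a matrix-of-noncommutative-forms and need not commute with $p$ componentwise --- is neutralized at the outset by the identity $p\Omega p=\Omega$, which lets every step run as in the classical proof.
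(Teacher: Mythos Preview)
Your arguments for closedness and for independence of the noncommutative connection are essentially the paper's: both rest on Lemma~\ref{lemA} and the Bianchi identity $[\nabla,\Omega]=0$, and the transgression along a path $\nabla_t=(1-t)\nabla_0+t\nabla_1$ is the same computation (the paper writes it degree by degree as $\frac{d}{dt}\tau_E(\nabla_t^{2k})=d\tau_E\bigl(k(\frac{d}{dt}\nabla_t)\nabla_t^{2k-2}\bigr)$, which is your Duhamel argument unpacked). The divergence, and the gap, is in the third step.

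For homotopy invariance under a path $p_t$ of projections you propose to construct invertibles $u_t$ with $u_tp_0u_t^{-1}=p_t$ by solving $\dot u_t=[\dot p_t,p_t]u_t$, then invoke conjugation-invariance of the trace $\tau_E$. The trouble is that the ambient algebra $C^\infty(X,\End E)_{\mathbb{Z}}$ is the \emph{algebraic} crossed product: its elements are \emph{finite} sums $\sum_k a(k)T^k$. If the components of $p_t$ are supported in $|k|\le N$, then $[\dot p_t,p_t]$ is supported in $|k|\le 2N$, and the Dyson series for $u_t$ has $n$-th term supported in $|k|\le 2nN$; generically $u_t$ has infinitely many nonzero components and does not lie in the algebra. (The alternative ``$w=p_{t_1}p_{t_0}+(1-p_{t_1})(1-p_{t_0})$ is close to $1$, hence invertible'' fails for the same reason: this algebra is not closed under the geometric-series inverse.) So the existence of $u_t$ is not bookkeeping but a genuine gap. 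The paper sidesteps the issue entirely: using the connection-independence already proved together with an embedding $E\subset X\times\mathbb{C}^N$, it reduces to the trivial bundle with the flat connection $\nabla_E=d$, where $\tau$ is a \emph{differential} graded trace ($d\tau=\tau d$); then the standard, purely algebraic Chern--Weil transgression along the path $p_t$ itself (rather than along a path of conjugating invertibles) shows $\frac{d}{dt}\ch p_t$ is $d$-exact, with no completeness hypothesis needed.
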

\begin{remark}
Here   the $K_0$-group of  $C^\infty(X,\End E)_\mathbb{Z}$ is by definition the  Grothendieck group of 
homotopy classes of matrix projections with entries in this algebra.  
\end{remark}

\begin{remark}Strictly speaking, to define the Chern character on the $K$-group,  we need
to consider arbitrary matrix projections over   $C^\infty(X,\End E)_\mathbb{Z}$, while we considered only scalar projections. However, matrix projections can be considered as elements of the algebra   $C^\infty(X,\End (E\otimes \mathbb{C}^k))_\mathbb{Z}$. Therefore, we do not consider the matrix case to avoid excessively complicated notation.  
\end{remark}
\begin{proof}

1. By Lemma  \ref{lemA}, the form $\ch p$ is closed.  Indeed, we have
$$
d\tau_E(\Omega^k)=\tau_E\left([\nabla,\Omega^k]\right)=0,\quad \text{since }[\nabla,\nabla^{2k}]=0.
$$

2. Let us show that the cohomology class of   $\ch p$ does not depend on the choice of the noncommutative connection $\nabla$. Let $\nabla_{0},\nabla_{1}$ be two noncommutative connections for  $p$. Then their difference is an operator of multiplication by a noncommutative   $1$-form: $\alpha:=\nabla_{1}-\nabla_{0}$. Consider the homotopy of noncommutative connections  
$$
\nabla_{t}=(1-t)\nabla_{0}+t\nabla_{1}.
$$
Then we have $\frac d{dt}\nabla_t=p(\nabla_1-\nabla_0)p=p\alpha p.$ Hence
\begin{multline*}
\frac d{dt}\tau_E(\nabla_t^{2k})=k\tau_E\left(\left(\frac
d{dt}\nabla_t^2\right)\nabla_t^{2k-2}\right)=k\tau_E\left(\left[\nabla_t,\frac
d{dt}\nabla_t\right]\nabla_t^{2k-2}\right)=\\
=k\tau_E\left(\left[\nabla_t,\left(\frac
d{dt}\nabla_t\right)\nabla_t^{2k-2}\right]\right)=d\tau_E\left(k\left(\frac
d{dt}\nabla_t\right)\nabla_t^{2k-2}\right).
\end{multline*}
Integrating this expression over $t\in[0,1]$, we obtain:
\begin{equation}\label{eq-warum1}
\tau_E(\nabla_1^{2k})-\tau_E(\nabla_0^{2k})=d\omega',
\end{equation}
where $\omega'$ is some differential form. Equality \eqref{eq-warum1} means that the forms $\ch p$ defined in terms of two connections $\nabla_{ 1}$ and $\nabla_{ 0}$, are cohomologous.

3. Let $p_t,t\in[0,1]$ be a smooth homotopy of projections connecting $p_0$ to $p_1$.
We want to show that the difference of the corresponding Chern forms is an exact form. By item~2 of the present proof, it suffices to consider the case, where  $p$ acts in a trivial bundle with the trivial connection  $\nabla_E=d$.  In this case,
the functional  $\tau_E$ is a differential graded trace. Hence, homotopy invariance of  the Chern form in Haefliger cohomology follows from the standard computations   (e.g., see  \cite{NaSaSt17}).
\end{proof}

By this proposition, we obtain a well-defined mapping   (Chern character)
$$
 K_0(C^\infty(X,\End E)_\mathbb{Z})\stackrel{\ch }\longrightarrow   H^{ev}(X/\mathbb{Z})\otimes \mathbb{C}.
$$

 \begin{example}\label{exaa1}
Let $E=X\times\mathbb{C}^N$ be the trivial bundle and $\nabla_E=d$. Then  the noncommutative connection is equal to $\nabla_p=p d p$, its curvature form is equal to $\Omega_p=(\nabla_p)^2=pdpdp$. Hence, the Chern character form is given by the standard formula  
\begin{equation}\label{eq-ch-flat}
\ch p = \tr [p\exp(-dpdp/2\pi i)]_0,
\end{equation}
where $\omega_0$ denotes the coefficient at $T^0=1$  and $\tr$ is the matrix trace.
\end{example}

\subsection{Twisted Chern character (Chern character with coefficients in a vector bundle)}

Given a diffeomorphism $g:X\to X$  as above, we defined Chern character on the $K$-group $K_0(C^\infty(X)\rtimes \mathbb{Z})$. Suppose now, we are also given a $g$-bundle $E\in\Vect(X)$ (i.e., there is an extention of the diffeomorphism $g:X\to X$   to a fiberwise isomorphism $E\to E$). Then  on the same  $K$-group 
we can define Chern character twisted by $E$. To define this twisted Chern character, consider the algebra homomorphism
$$
\begin{array}{ccc}
 \beta: C^\infty(X)\rtimes \mathbb{Z} & \longrightarrow & C^\infty(X,\End E)\rtimes \mathbb{Z},\vspace{2mm}\\
 \sum_k a(k)T^k & \longmapsto & \sum_k (a(k)\otimes 1_E) \widetilde{T}^k,
\end{array}
$$
where $\widetilde{T}:C^\infty(X,E)\to C^\infty(X,E)$ is the shift operator of $E$ (see (\ref{eqa2})).

\begin{definition}{\em The twisted Chern character} is the composition of mappings:
$$
\ch_E: K_0(C^\infty(X)\rtimes\mathbb{Z})\stackrel{\beta_*}\lra
 K_0(C^\infty(X,\End E)\rtimes\mathbb{Z})\stackrel{\ch}\lra
H^{ev}(X/\mathbb{Z}).
$$
\end{definition}
The following proposition is obvious.
\begin{proposition}\label{prop-first-class1}
The composition of mappings  
$$
 K_0(C^\infty(X))\to K_0(C^\infty(X)\rtimes\mathbb{Z})\stackrel{\ch_E}\to H^{ev}(X/\mathbb{Z})
$$ 
is equal to $[p]\mapsto (\ch \im p)(\ch E) $, where $\im p\in\Vect(X)$ is the vector bundle defined by   projection $p$, 
while $\ch$ is the Chern character of a vector bundle.
\end{proposition}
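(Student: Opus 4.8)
The plan is to observe that a class in $K_0(C^\infty(X))$ is represented by a projection with no $T$-components, so the entire computation of $\ch_E$ takes place in $\mathbb{Z}$-degree zero, where the noncommutative Chern character collapses to the classical Chern--Weil construction. First I would take a scalar matrix projection $p\in C^\infty(X,\Mat_N(\mathbb{C}))$ representing the given $K$-class, regard it as an element of $C^\infty(X)\rtimes\mathbb{Z}$ concentrated at $T^0$, and apply $\beta_*$. By the definition of $\beta$ the resulting projection is $p\otimes 1_E\in C^\infty(X,\End E)\rtimes\mathbb{Z}$, again concentrated at $\widetilde T^0=\mathrm{id}$. In particular the shift operator $\widetilde T$, i.e.\ the $g$-bundle structure of $E$, never enters the computation, which is why the proposition is essentially formal.

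Next I would evaluate $\ch(p\otimes 1_E)$ with a conveniently chosen noncommutative connection. Since the Haefliger class of the Chern form does not depend on this choice (by the preceding proposition), I pick an ordinary linear connection $\nabla_{E\otimes\mathbb{C}^N}$ on the vector bundle $E\otimes\mathbb{C}^N$ and set $\nabla=(p\otimes 1_E)\nabla_{E\otimes\mathbb{C}^N}(p\otimes 1_E)$. This $\nabla$ is concentrated at $T^0$, hence so is its curvature form $\Omega\in\Lambda^2(X,\End(E\otimes\mathbb{C}^N))$, and $\Omega$ is nothing but the classical curvature of the subbundle $\im(p\otimes 1_E)$ equipped with the induced connection. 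Now on forms concentrated in $\mathbb{Z}$-degree zero the graded trace $\tau_{E\otimes\mathbb{C}^N}$ of \eqref{trace1} is, by its very definition, the fibrewise matrix trace $\tr_{E\otimes\mathbb{C}^N}$ followed by the projection $\Lambda(X)\to\Lambda(X/\mathbb{Z})$. Therefore $\ch(p\otimes 1_E)$ is represented by the classical Chern--Weil form $\tr_{E\otimes\mathbb{C}^N}\bigl((p\otimes 1_E)\exp(-\Omega/2\pi i)\bigr)$, regarded as a Haefliger form.

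Finally I would identify $\im(p\otimes 1_E)\cong\im p\otimes E$ as vector bundles on $X$, so that the form just obtained is the classical Chern character form of $\im p\otimes E$; by multiplicativity of the classical Chern character its cohomology class equals $(\ch\im p)(\ch E)$, and its image in Haefliger cohomology under the natural map $H^{ev}(X)\to H^{ev}(X/\mathbb{Z})$ is the asserted answer. I do not expect a genuine obstacle here; the only points that deserve an explicit line are that $\beta_*$ and the chosen connection can both be kept concentrated at $T^0$, and the identification of the restriction of $\tau_{E\otimes\mathbb{C}^N}$ to $\mathbb{Z}$-degree-zero forms with the ordinary trace composed with the natural map to Haefliger forms — both immediate from the definitions of Section~\ref{sec-3}.
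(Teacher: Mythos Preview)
Your argument is correct and is precisely the routine verification one would carry out; the paper itself gives no proof, simply declaring the proposition ``obvious'' just before stating it. Your unpacking---that $\beta_*[p]=[p\otimes 1_E]$ stays concentrated at $T^0$, that the noncommutative Chern--Weil data then reduce to classical ones, and that $\im(p\otimes 1_E)\cong \im p\otimes E$ gives the multiplicative formula via the projection $H^{ev}(X)\to H^{ev}(X/\mathbb{Z})$---is exactly what makes the claim obvious.
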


\subsection{Chern character of an elliptic symbol}

\paragraph{Class of symbol in $K$-theory.}
Let $D$ be an elliptic operator of the form~\eqref{eq-oper1}. To its symbol $\sigma(D)$ we now assign an element in   $K$-theory. To this end, we extend the diffeomorphism   $\partial g:S^*M\to S^*M$  to a diffeomorphism  
$S^*M\times \mathbb{S}^1\to S^*M\times \mathbb{S}^1$ that acts as identity along  $\mathbb{S}^1$. 
This action defines a crossed product  denoted by  $C^\infty( S^*M\times \mathbb{S}^1)\rtimes\mathbb{Z}$. 
Consider the projection $\mathcal{P}=\{\mathcal{P}(\varphi)\}$  
$$
\mathcal{P}(\varphi)=
\left\{
\begin{array}{cl}
  \left(%
\begin{array}{cc}
  I_N\cos^2\varphi & \sigma(D)\sin\varphi\cos\varphi \\
  \sigma(D)^{-1} \sin\varphi\cos\varphi &I_N\sin^2\varphi \\
\end{array}%
\right), & \text{for  }\varphi\in[0,\pi/2],\\
 \left(%
\begin{array}{cc}
 I_N \cos^2\varphi & I_N \sin\varphi\cos\varphi \\
    I_N\sin\varphi\cos\varphi & I_N\sin^2\varphi \\
\end{array}%
\right), & \text{for }\varphi\in[ \pi/2,2\pi],
\end{array}
\right.
$$
where $\varphi$ is the coordinate on the circle. Note that the coefficients of  $\mathcal{P}$ are piecewise smooth functions of $\varphi$.

Let us define the element
\begin{equation}\label{eq-diff-2}
    [\sigma(D)]=[\mathcal{P}]\in K_0(C^\infty( S^*M\times \mathbb{S}^1)\rtimes\mathbb{Z}),
\end{equation}
where $[\mathcal{P}]$ is the equivalence class of the smoothed family of projections  $\{\mathcal{P}(\varphi)\}$ in a neighborhood of submanifolds $\varphi=0$ and $\varphi=\pi/2$.

\paragraph{Chern character.} 
Given an elliptic symbol and a   $g$-bundle $E\in \Vect(X)$, it follows from the constructions of the previous subsection that 
we have the twisted Chern character 
\begin{equation}\label{eq-ch5}
\ch_E [\sigma(D)]\in H^{ev}(S^*M/\mathbb{Z})
\end{equation}
in Haefliger cohomology. In Sec.~\ref{sec-4}, we  define one special twisting bundle useful for  the index formula. Let us now  obtain one property of the Chern character~\eqref{eq-ch5} that simplifies its computation. 

Let $t=\varphi/2\pi$ be the coordinate along on the torus   $S^*M\times\mathbb{S}^1$.
\begin{lemma}\label{lem-useful1} One has 
\begin{equation}\label{eq-s1}
 \ch_E[\sigma(D)]=\tr_E \exp\left(-\frac{\nabla_{tor}^2}{2\pi i}\right)
       \in H^*((S^*M\times\mathbb{S}^1)/\mathbb{Z}),
\end{equation}
where the operator
\begin{equation}
\nabla_{tor} =dt \frac \partial{\partial t}+t\nabla +(1-t)\sigma^{-1}\nabla \sigma,\qquad \sigma=\beta(\sigma(D)),
\label{eqa4}
\end{equation}
is defined by an arbitrary noncommutative connection  $\nabla $ in~$E$, and $\nabla_{tor}^2$ is the curvature form.
\end{lemma}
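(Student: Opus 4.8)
The plan is to compute the Chern character of the projection $\mathcal{P}$ directly from Definition \ref{chern1}, choosing a convenient noncommutative connection adapted to the explicit block form of $\mathcal{P}(\varphi)$, and then to recognize the resulting curvature as $\nabla_{tor}^2$. First I would pass, via the homomorphism $\beta$, to the algebra $C^\infty(S^*M\times\mathbb{S}^1,\End E)\rtimes\mathbb{Z}$, so that $\ch_E[\sigma(D)]=\ch \beta_*[\mathcal{P}]$, where $\beta_*[\mathcal{P}]$ is the class of the same matrix but with $\sigma=\beta(\sigma(D))$ and entries acting on $E$-valued forms. By the proposition on homotopy invariance of $\ch$, it suffices to work with a single convenient noncommutative connection for $\mathcal{P}$ and with the original (non-smoothed) family $\mathcal{P}(\varphi)$: the smoothing near $\varphi=0,\pi/2$ changes $\mathcal{P}$ by a homotopy of projections supported in a small neighborhood, hence does not change the cohomology class.

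Next I would identify the image bundle of $\mathcal{P}$. On $\varphi\in[\pi/2,2\pi]$ the projection is the constant rank-$N$ projection $\bigl(\begin{smallmatrix}\cos^2\varphi & \sin\varphi\cos\varphi\\ \sin\varphi\cos\varphi & \sin^2\varphi\end{smallmatrix}\bigr)$ onto the line spanned by $(\cos\varphi,\sin\varphi)$ in each $\mathbb{C}^2$-factor; on $\varphi\in[0,\pi/2]$ it is a twisted version with $\sigma,\sigma^{-1}$ inserted. A clean way to organize the computation is to conjugate by the invertible (off the smoothing locus) block matrix $g(\varphi)=\bigl(\begin{smallmatrix}\cos\varphi & -\sin\varphi\\ \sigma^{-1}\sin\varphi & \cos\varphi\end{smallmatrix}\bigr)$ on $[0,\pi/2]$, which carries $\mathcal{P}(\varphi)$ to the constant projection $e=\bigl(\begin{smallmatrix}1&0\\0&0\end{smallmatrix}\bigr)$; this exhibits $\im\mathcal{P}$ over the torus as the bundle $p^*E$ for the ``Bott-type'' construction in the parameter $t=\varphi/2\pi$. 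Choosing the noncommutative connection $\mathcal{P}\,(\nabla_E\oplus\nabla_E)\,\mathcal{P}$ in the trivial rank-$2N$ bundle and pushing it through the conjugation $g(\varphi)$, one obtains exactly a connection of the form $dt\,\partial_t + t\nabla + (1-t)\sigma^{-1}\nabla\sigma$ on $e\cdot(\text{trivial})=E$ — i.e.\ the operator $\nabla_{tor}$ of \eqref{eqa4} — because $\tfrac{d}{dt}$ of the rotation produces the $dt\,\partial_t$ term and the off-diagonal conjugation produces the interpolation between $\nabla$ and its gauge transform $\sigma^{-1}\nabla\sigma$. Then $\nabla_{tor}^2$ is the curvature $2$-form $\Omega$ of Proposition \ref{lemB}, and applying the graded trace $\tau_E$ and Definition \ref{chern1} gives
\begin{equation*}
\ch_E[\sigma(D)]=\tau_E\!\left(e\exp\!\left(-\frac{\Omega}{2\pi i}\right)\right)=\tr_E\exp\!\left(-\frac{\nabla_{tor}^2}{2\pi i}\right),
\end{equation*}
where the last equality uses that $e$ is the identity on the $E$-summand so $\tau_E(e\,(\cdot))=\tr_E(\cdot)$ on that summand, which is precisely \eqref{eq-s1}.

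The main obstacle I expect is bookkeeping rather than anything deep: one must check carefully that the conjugating matrix $g(\varphi)$, though only piecewise smooth and only invertible away from $\varphi=0,\pi/2$, can be made honestly smooth after the standard smoothing of $\mathcal{P}$ near those loci, and that the connection it produces is genuinely of the stated form $\nabla_{tor}$ up to a noncommutative gauge equivalence that does not affect $\ch$ (by the connection-independence in the cited proposition). One also has to verify that the crossed-product/noncommutative structure is respected throughout — i.e.\ that all the matrix manipulations are compatible with the $T^k$-twisting and with the action $\partial g$ on $S^*M$ — but this is automatic since $\sigma$ lives in the algebra $C^\infty(S^*M\times\mathbb{S}^1,\End E)\rtimes\mathbb{Z}$ and every operation above ($p\nabla p$, conjugation, $\tau_E$, $\exp$) is an operation internal to that algebra and its module of noncommutative forms. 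A secondary point is to confirm the sign and normalization: the term $dt\,\partial_t$ must appear with coefficient $1$ (not $2\pi$) once we use the coordinate $t=\varphi/2\pi$ rather than $\varphi$, which is exactly the reason the lemma is phrased in terms of $t$.
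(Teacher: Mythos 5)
Your strategy is essentially the same as the paper's: conjugate $\mathcal{P}$ to the constant projection $e=\bigl(\begin{smallmatrix}1&0\\0&0\end{smallmatrix}\bigr)$ by a rotation-type block matrix and use the graded-trace cyclicity (together with connection-independence of $\ch$) to reduce the Chern form of a connection on $\im\mathcal{P}$ to the Chern form of $\nabla_{tor}$. However, the proposal leaves a genuine gap at exactly the point where the paper's proof does its real work.

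The operator $\nabla_{tor}$ of \eqref{eqa4} is \emph{not} a connection on the torus $S^*M\times\mathbb{S}^1$: at $t=0$ its non-$dt$ part is $\sigma^{-1}\nabla\sigma$ while at $t=1$ it is $\nabla$, and these differ. So $\nabla_{tor}$ only makes sense on the cylinder $S^*M\times[0,1]$, and the assertion that $\tr_E\exp(-\nabla_{tor}^2/2\pi i)$ defines a class in $H^*((S^*M\times\mathbb{S}^1)/\mathbb{Z})$ requires an argument. The paper supplies it by building a bona fide torus connection $\nabla'=\mathcal{P}U_\varphi\,\nabla_{tor}\,U_\varphi^{-1}\mathcal{P}$ from an isomorphism $U_\varphi\colon\im\mathcal{P}(0)\to\im\mathcal{P}(\varphi)$ defined on the \emph{whole circle}, and on $[\pi/2,2\pi]$ this $U_\varphi$ is not merely a rotation but a rotation composed with the fixed off-diagonal factor $\bigl(\begin{smallmatrix}0&-\sigma\\\sigma^{-1}&0\end{smallmatrix}\bigr)$; it is precisely this extra factor that makes $\nabla'|_{\varphi=0}$ and $\nabla'|_{\varphi=2\pi}$ agree. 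Trace cyclicity then yields $\tau_E((\nabla')^{2k})=\tau_E(\nabla_{tor}^{2k})$, which is the content of \eqref{eq-s1}. Your proposal specifies a conjugating matrix only on $[0,\pi/2]$, says nothing about the remainder of the circle, and never checks the gluing at $\varphi=0$ versus $\varphi=2\pi$ — this is exactly the nontrivial step. (Also, the matrix $g(\varphi)$ as you wrote it is wrong: it should read $\bigl(\begin{smallmatrix}\cos\varphi&-\sigma\sin\varphi\\\sigma^{-1}\sin\varphi&\cos\varphi\end{smallmatrix}\bigr)$; without the $\sigma$ in the upper-right entry one does not have $\mathcal{P}(\varphi)g(\varphi)=g(\varphi)e$.)

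A secondary imprecision: pushing $\mathcal{P}(\nabla_E\oplus\nabla_E)\mathcal{P}$ forward through the conjugation does not literally produce the formula \eqref{eqa4}; it gives a connection whose $dt$-part carries an extra term from $g^{-1}dg$. You correctly observe that connection-independence of $\ch$ rescues the argument, but then the appearance of the specific operator $\nabla_{tor}$ in the statement remains unexplained by your computation. The paper's direction — \emph{starting} from $\nabla_{tor}$ and conjugating it into a connection on $\im\mathcal{P}$ — is what makes the identity $\tau_E((\nabla')^{2k})=\tau_E(\nabla_{tor}^{2k})$ immediate from cyclicity, with no appeal to connection-independence needed at that stage.
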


\begin{proof}
We have $[\sigma(D)]=[\mathcal{P}]$ (see  \eqref{eq-diff-2}).
Consider the isomorphism
$$
U_\varphi:\im \mathcal{P}(0) \lra \im \mathcal{P}(\varphi) ,\qquad \varphi\in[0,2\pi].
$$
$$
U_\varphi=\left\{
\begin{array}{cl}
\left(              \begin{array}{cc}
                     I_N\cos \varphi & \sigma (-\sin\varphi)\\
                      \sigma ^{-1}  \sin\varphi  & I_N\cos\varphi
                    \end{array}
                  \right), & \text{if } \varphi\in[0,\pi/2],\\
\left(
                    \begin{array}{cc}
                      \cos (\varphi-\pi/2)  & -\sin(\varphi-\pi/2)\\
                        \sin(\varphi-\pi/2) & \cos(\varphi-\pi/2)
                    \end{array}
                  \right)
\left(
                    \begin{array}{cc}
                      0 &- \sigma \\
                      \sigma^{-1}    & 0
                    \end{array}
                  \right),
& \text{if } \varphi\in[\pi/2,2\pi].
\end{array}
\right.
$$
We use this isomorphism and operator \eqref{eqa4} to 
define the noncommutative connection

\begin{equation}\label{eq-kuka1}
\nabla'=(\mathcal{P}(\varphi) U_\varphi )\nabla_{tor} (U_\varphi^{-1}\mathcal{P}(\varphi)),
\end{equation}
for the projection $\mathcal{P}=\{\mathcal{P}(\varphi)\}$ on the cylinder $S^*M\times [0,2\pi]$.  
We claim that this expression defines a connection on the torus  $S^*M\times\mathbb{S}^1$. 
To prove this, we need to check that the coefficients of the connection at $\varphi=0 $ and $\varphi=2\pi$ are compatible. We have at $\varphi=0$ 
$$
\nabla'|_{\varphi=0}=\left(\begin{array}{cc}  1 &0 \\  0  & 0   \end{array}  \right)
\sigma^{-1}\nabla  \sigma \left(\begin{array}{cc}  1 &0 \\  0  & 0   \end{array}  \right)=
\left(\begin{array}{cc}  \sigma^{-1}\nabla  \sigma  &0 \\  0  & 0   \end{array}  \right) ,
$$
while at  $\varphi=2\pi$ we obtain
$$
\nabla'|_{\varphi=2\pi}=\left(\begin{array}{cc}  1 &0 \\  0  & 0   \end{array}  \right)
\left(\begin{array}{cc} \sigma^{-1} &0 \\  0  & \sigma   \end{array}  \right)
 \nabla    \left(\begin{array}{cc} \sigma &0 \\  0  & \sigma^{-1}   \end{array}  \right)
\left(\begin{array}{cc}  1 &0 \\  0  & 0   \end{array}  \right) =\left(\begin{array}{cc}  \sigma^{-1}\nabla  \sigma  &0 \\  0  & 0   \end{array}  \right) .
$$
Therefore, we obtain the equality $\nabla'|_{\varphi=0}=\nabla'|_{\varphi=2\pi}$, i.e., the coefficients of the connection
are compatible and, therefore,  $\nabla'$ is a well-defined connection on the torus $S^*M\times\mathbb{S}^1$. 

The powers of the curvature form of this connection are  equal to 
$$
(\nabla')^{2N}=(\mathcal{P}(\varphi) U_\varphi) \nabla_{tor}^{2N} (U_\varphi^{-1}\mathcal{P}(\varphi)), \quad N\ge 1.
$$
Hence, we have $ \tau_E({\nabla'}^{2N})=\tau_E({\nabla}_{tor}^{2N}), $ 
i.e., we obtain the desired equality \eqref{eq-s1}.
\end{proof}

\section{The equation $\ch y=\Td x$}\label{sec-4}

Let $x$ be a complex vector bundle over some space $Z$. Consider the equation
\begin{equation}\label{eq-xyz}
\ch y=\Td x,
\end{equation}
where $\Td x$~ is the Todd class of $x$. Since the Chern character defines a rational isomorphism  $K^0(Z)\otimes \mathbb{Q}\simeq H^{ev}(Z)\otimes\mathbb{Q}$,
Eq.~\eqref{eq-xyz} has a unique solution $y \in K^0(Z)\otimes \mathbb{Q}$, which we denote for brevity by   $\psi(x)$. Moreover, the mapping  $x\mapsto \psi(x)$  defines an operation
$$
\psi: K^0(Z)\otimes \mathbb{Q}\lra K^0(Z)\otimes \mathbb{Q}
$$
in $K$-theory with rational coefficients. This operation is multiplicative:
$$
\psi(a+b)=\psi(a)\psi(b)
$$
(this follows from the multiplicative property of the Todd class) and stable:
$$
\psi(a+1)=\psi(a).
$$

According to a theorem of Atiyah \cite{Ati2} any stable operation in $K$-theory is a formal power series in Grothendieck  operations    $\gamma_j$, $j=1,2,...$ with rational coefficients. In addition, any multiplicative operation is determined by a formal power series    
$$
 f(x)=1+\sum_{k\ge 1}a_kx^k
$$  as follows:
\begin{enumerate}
\item[1)] the infinite product  $f(x_1)f(x_2)...$ is represented as a symmetric formal power series in variables
 $x_1,x_2,...$. Hence, this product is expressed as a formal power series in terms of elementary symmetric functions  $\sigma_1(x_1,x_2,...),\sigma_2(x_1,x_2,...),...$
$$
\prod_j f(x_j)=P(\sigma_1,\sigma_2,...);
$$
\item[2)] now a multiplicative operation for   $f$ is obtained if we replace the elementary symmetric functions by Grothendieck  operations 
$$
P(\gamma_1,\gamma_2,...).
$$
\end{enumerate}

For the operation $\psi$, the corresponding formal power series is computed in the following proposition.

\begin{proposition}
The multiplicative operation $\psi$ is defined by the series
$$
\psi(x)=\frac{\ln(1+x)}x(1+x)=1+\sum_{n=1}^\infty \frac{(-1)^{n+1}}{n(n+1)}x^n.
$$
\end{proposition}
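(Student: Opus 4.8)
The plan is to pin down the universal power series $f(x)=1+\sum_{k\ge1}a_kx^k$ attached to the multiplicative, stable operation $\psi$ by evaluating $\psi$ on line bundles, and then to expand this series. By the formalism recalled above (Atiyah's theorem \cite{Ati2} together with the splitting principle) such an operation is completely determined by $f$, and in the correspondence between $f$ and the operation the formal variable $x_j$ entering the product $\prod_j f(x_j)$ stands for the reduced class $[L_j]-1$ of a line bundle $L_j$: indeed $\gamma_t([L]-1)=1+([L]-1)t$, so the elementary symmetric functions of the classes $[L_j]-1$ coincide with the Grothendieck operations $\gamma_j$. Hence it is enough to compute $\psi$ on a single line bundle.

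First I would fix a line bundle $L$ with $c_1(L)=t$ and set $x=[L]-1$, so that $\ch x=e^t-1$. Since $\ch:K^0(Z)\otimes\mathbb{Q}\to H^{ev}(Z)\otimes\mathbb{Q}$ is a ring homomorphism and $f$ is a formal power series, we get $\ch\bigl(\psi(L)\bigr)=\ch\bigl(f(x)\bigr)=f(\ch x)=f(e^t-1)$. On the other hand, the defining equation $\ch\psi=\Td$ together with the classical formula $\Td L=\frac{t}{1-e^{-t}}$ for the Todd class of a line bundle gives $\ch\bigl(\psi(L)\bigr)=\frac{t}{1-e^{-t}}$. Comparing the two, we obtain the functional identity $f(e^t-1)=\frac{t}{1-e^{-t}}$.

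Next I would solve this identity by the substitution $x=e^t-1$, i.e. $t=\ln(1+x)$, so that $e^{-t}=(1+x)^{-1}$ and $1-e^{-t}=\frac{x}{1+x}$; this yields
$$
f(x)=\frac{\ln(1+x)}{x/(1+x)}=\frac{\ln(1+x)}{x}(1+x).
$$
Since $\ch$ is injective on $K^0\otimes\mathbb{Q}$ (the fact already used for the existence and uniqueness of $\psi$) and the splitting principle reduces an arbitrary bundle to a sum of line bundles, this $f$ indeed represents $\psi$. Finally I would expand: from $\ln(1+x)=\sum_{k\ge1}\frac{(-1)^{k-1}}{k}x^k$ one gets $\frac{\ln(1+x)}{x}=\sum_{j\ge0}\frac{(-1)^j}{j+1}x^j$, and multiplying by $1+x$ the coefficient of $x^n$ for $n\ge1$ becomes $\frac{(-1)^n}{n+1}+\frac{(-1)^{n-1}}{n}=\frac{(-1)^{n+1}}{n(n+1)}$, while the constant term is $1$; this is exactly the asserted series.

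I expect the only genuinely delicate point to be the normalization in the first paragraph: one must be certain that the variable of $f$ is the reduced class $[L]-1$, with Chern character $e^t-1$, and not $[L]$ itself nor the Chern root $t$ — under any other convention the substitution in the third step changes and one lands on a different (incorrect) series. Everything else is a routine manipulation of formal power series.
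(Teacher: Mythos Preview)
Your proof is correct and follows essentially the same route as the paper: both evaluate the operation on the reduced class $x=[L]-1$ of a line bundle (the paper uses the tautological bundle on $\mathbb{CP}^N$), obtain the equation $f(e^u-1)=u/(1-e^{-u})$, and solve it by the substitution $x=e^u-1$. You add a careful justification of the normalization via $\gamma_t([L]-1)=1+([L]-1)t$ and the explicit series expansion, which the paper omits, but the substance of the argument is the same.
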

\begin{proof}
To compute the coefficients of the desired series $f$, let us take $Z=\mathbb{CP}^N$
as $N\to\infty$. We have $K(\mathbb{CP}^N)=\mathbb{Z}[x]/\{x^{N+1}=0\}$, $x=[\varepsilon]-[\mathbf{1}]$,
where $\varepsilon$ is the tautological  line bundle over the projective space, and  
$\mathbf{1}$ is the  trivial line bundle. We have $\ch x= e^u-1$, where $u=[\mathbb{CP}^1]\subset H^2(\mathbb{CP}^N)=\mathbb{Z}$ is the generator.

Let us use the method of undetermined coefficients. Let
$$
 \psi(x)= \sum_{k\ge 0} c_k x^k,\qquad c_0=1.
$$
Then the equation  $\ch\psi(x)=\Td x$ is written as
$$
\sum_k c_k(e^u-1)^k=\frac{u}{1-e^{-u}}.
$$
Changing the variable by the rule $e^u-1=t$, this gives the desired function
$$
\psi(t)=(1+t)\frac{\ln(1+t)}t.
$$
\end{proof}

Note that Grothendieck  operations can be expressed in terms of operations of direct sum, tensor product and exterior powers.  Therefore,  if   $E$ is a  $g$-bundle, then 
 $\psi(E)$  (as a virtual bundle with rational coefficients) can also be considered as a   $g$-bundle. A direct computation gives the following explicit expressions for the operation  $\psi$ on spaces $Z$
of small dimension.

\begin{proposition}
The operation  $\psi$ is equal to  (here $n=\rk E$)
$$
\begin{array}{cl}
 \dim Z\le 3: &   \displaystyle\psi(E)=1+\frac {E-n}2;\vspace{2mm} \\
 \dim Z\le 5 : & \displaystyle\psi(E)= \frac{3n^2-19n+24}{24}+
                           \frac{(-3n+13)}{12}E-\frac 1 6 {E\otimes E} +\frac 7{12}\Lambda^2 E.
\end{array}
$$
\end{proposition}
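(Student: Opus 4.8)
The plan is to compute the operation $\psi$ explicitly in low degrees by expanding the universal multiplicative series $f(x)=\psi(x)=1+\frac12 x-\frac16 x^2+\cdots$ in terms of the Grothendieck $\gamma$-operations, and then translating the $\gamma$-operations into the more familiar operations $\oplus$, $\otimes$, $\Lambda^j$ using the standard identities. Recall that for a bundle $E$ of rank $n$ one writes $E=n+y$ with $y=E-n$ an element of the augmentation ideal, and $\gamma_t(y)=\gamma_t(E)/\gamma_t(n)=(1+ty)\cdots$ expands in a way that expresses $\gamma_j(y)$ through $\Lambda^i(E)$; concretely $\gamma_1(y)=y=E-n$, and $\gamma_2(y)=\Lambda^2 E-(n-1)E+\binom{n-1}{2}+\cdots$, etc. Since $\psi$ is the multiplicative operation attached to $f$, we have $\psi(y)=P(\gamma_1(y),\gamma_2(y),\dots)$ where $\prod_j f(x_j)=P(\sigma_1,\sigma_2,\dots)$, and truncating at the appropriate filtration degree gives the stated formulas.

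First I would record the beginning of the universal polynomial $P$. Writing $f(x)=1+a_1x+a_2x^2+\cdots$ with $a_1=\frac12$, $a_2=-\frac16$ (from the Proposition just proved: $a_k=\frac{(-1)^{k+1}}{k(k+1)}$), a direct symmetric-function computation gives
\begin{equation*}
\prod_j f(x_j)=1+a_1\sigma_1+\bigl(a_2\sigma_1^2+(a_1^2-2a_2)\sigma_2\bigr)+\cdots,
\end{equation*}
wait — more carefully one expands $\prod_j(1+a_1x_j+a_2x_j^2)$ and collects by total degree, using $\sum x_j=\sigma_1$, $\sum x_j^2=\sigma_1^2-2\sigma_2$, $\sum_{i<j}x_ix_j=\sigma_2$, so that the degree-$1$ part is $a_1\sigma_1$, the degree-$2$ part is $a_2(\sigma_1^2-2\sigma_2)+a_1^2\sigma_2=a_2\sigma_1^2+(a_1^2-2a_2)\sigma_2$, and so on through degree $5$. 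Substituting $a_1=\tfrac12$, $a_2=-\tfrac16$ yields $P=1+\tfrac12\sigma_1-\tfrac16\sigma_1^2+\tfrac{7}{12}\sigma_2+\cdots$. Replacing $\sigma_j$ by $\gamma_j$ gives $\psi(y)=1+\tfrac12\gamma_1(y)-\tfrac16\gamma_1(y)^2+\tfrac{7}{12}\gamma_2(y)+\cdots$.

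Next I would substitute $y=E-n$ and express everything in the operations on $E$. For $\dim Z\le 3$ only the degree-$\le 1$ part survives in $H^{ev}$ up to the relevant filtration (terms of $\gamma$-degree $\ge2$ lie in $H^{\ge4}$), and since $\gamma_1(E-n)=E-n$ we get $\psi(E)=1+\tfrac12(E-n)$, which is the first formula. For $\dim Z\le 5$ I keep terms through $\gamma$-degree $2$: here $\gamma_1(E-n)^2=(E-n)\otimes(E-n)=E\otimes E-2nE+n^2$, and $\gamma_2(E-n)$ must be rewritten via $\gamma_2(E-n)=\Lambda^2 E-(n-1)E+\binom{n-1}{2}$ (from $\gamma_t(E)=\lambda_{t/(1-t)}(E)$, expanded to second order, after dividing by $\gamma_t(n)=(1-t)^{-n}$). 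Plugging in and collecting the coefficients of $1$, $E$, $E\otimes E$, $\Lambda^2 E$ should produce exactly $\frac{3n^2-19n+24}{24}$, $\frac{-3n+13}{12}$, $-\frac16$, $\frac{7}{12}$ respectively; I would double-check the constant and linear coefficients by evaluating both sides on $E=\mathbf 1_n$ (where $\psi(E)=1$ by stability, giving one linear relation among the coefficients) and on a line bundle, or by the $\mathbb{CP}^N$ test already used in the previous proof.

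The main obstacle is purely bookkeeping: getting the conversion between $\gamma$-operations and $\Lambda^j$-operations right, including the binomial constants coming from $\gamma_t(n)=(1-t)^{-n}$, and then correctly collecting a modest number of rational coefficients without sign or factor errors. There is no conceptual difficulty — multiplicativity and stability of $\psi$ are already established, and Atiyah's theorem guarantees $\psi$ is a power series in the $\gamma_j$ — so the proof is a finite, verifiable computation, best sanity-checked by specializing to $E$ trivial and to $Z=\mathbb{CP}^N$.
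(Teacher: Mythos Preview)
Your approach is essentially the same as the paper's: expand the multiplicative series $\prod_j f(x_j)$ in the elementary symmetric functions, replace $\sigma_j$ by $\gamma_j$, convert the $\gamma$-operations on $E-n$ into exterior powers of $E$ via $\gamma_t(E-n)=(1-t)^n\lambda_{t/(1-t)}(E)$, and collect. (One small slip: the constant term in $\gamma_2(E-n)$ is $\binom{n}{2}=\frac{n(n-1)}{2}$, not $\binom{n-1}{2}$; your own sanity check on $E=\mathbf{1}_n$ would catch it.)
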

\begin{proof}
The series $\psi(x)$  defines the symmetric formal power series
$$
 \prod_j (1+x_j)\prod_j\frac{\ln(1+x_j)}{x_j}\equiv AB, \quad A=1+\sigma_1+\sigma_2+\ldots.
$$
Let us express the term  $B$ in terms of elementary symmetric functions. We have
\begin{multline*}
B=\prod\left(1-\frac{x_j}2+\frac{x_j^2}3-\frac{x_j^3}4+\ldots\right)=\\
=1-\frac 1 2 \sum x_j+\frac 1 3\sum x_j^2+
\frac 1 4 \sum_{i<j}x_i x_j-\frac 1 4 \sum x_j^3-\frac 1 6\sum_{i\ne j}x_ix_j^2-\frac 1  8\sum_{i<j<k} x_i x_j x_k+\ldots,
\end{multline*}
where dots stand for terms of orders $\ge 4$. Let $p_k=\sum_j x_j^k $  and continue the computation
\begin{multline*}
B=1-\frac {p_1}2+\frac{p_2} 3+\frac{\sigma_2}4 -\frac{p_3}{12}-\frac{p_1 p_2}6-\frac{\sigma_3}8+\ldots=\\
 = 1-\frac {\sigma_1}2+\frac{\sigma_1^2-2\sigma_2} 3+\frac{\sigma_2}4 -\frac{\sigma_1^3-3\sigma_1\sigma_2+3\sigma_3}{12}-\frac{\sigma_1 (\sigma_1^2-2\sigma_2)}6-\frac{\sigma_3}8+\ldots=\\
=1- \frac {\sigma_1}2+\frac{4\sigma_1^2-5\sigma_2}{12}+
 \frac{-6\sigma_1^3+14\sigma_1\sigma_2-9\sigma_3}{24} +\ldots
\end{multline*}
Here we used Newton's formulas:
$$
p_1=\sigma_1, p_2=\sigma_1^2-2\sigma_2, p_3=\sigma_1^3-3\sigma_1\sigma_2+3\sigma_3.
$$
This implies the following expression for   $\psi$ in terms of Grothendieck  operations:
\begin{multline}\label{eq-fishka1}
\psi=(1+\gamma_1+\gamma_2+\gamma_3+\ldots)\left(1- \frac {\gamma_1}2+\frac{4\gamma_1^2-5\gamma_2}{12}+
 \frac{-6\gamma_1^3+14\gamma_1\gamma_2-9\gamma_3}{24} +\ldots\right)=\\
=1+ \frac {\gamma_1}2+\frac{-2\gamma_1^2+7\gamma_2}{12}+
 \frac{2\gamma_1^3-8\gamma_1\gamma_2+15\gamma_3}{24} +\ldots
\end{multline}

The operations  $\gamma_j$ can be expressed in terms of exterior powers  
(see \cite{Ati2})
\begin{multline*}
\gamma_t\equiv\sum{\gamma_j}t^j =\sum_{k\ge 0}\frac{t^k}{(1-t)^k}\Lambda^k(1-t)^n=\\
=(1-t)^n\left( 1+\frac t{1-t}\Lambda^1+\frac {t^2}{(1-t)^2}\Lambda^2+\frac {t^3}{(1-t)^3}\Lambda^3+\ldots\right)=\\
=(1-t)^n(1+t\Lambda^1+t^2(\Lambda^1+\Lambda^2)+t^3(\Lambda^1+2\Lambda^2+\Lambda^3)+\ldots)=\\
= 1+t(\Lambda^1 -n)+t^2\left(\Lambda^1+\Lambda^2-n\Lambda^1+\frac{n(n-1)}2 \right)+\\
+t^3\left( \Lambda^1+2\Lambda^2+\Lambda^3-n\Lambda^1-n\Lambda^2+\frac{n(n-1)}2\Lambda^1
-\frac{n(n-1)(n-2)}6\right)+\ldots.
\end{multline*}
Substituting these expressions in \eqref{eq-fishka1}, we obtain the following formula  
\begin{multline}\label{eq-fishka2}
\psi= 1+ \frac {\Lambda^1-n}2+\frac{-2(\Lambda^1-n)^2+7(\Lambda^1+\Lambda^2-n\Lambda^1+n(n-1)/2)}{12}+\\
+ \frac{2(\Lambda^1-n)^3-8(\Lambda^1-n)(\Lambda^1+\Lambda^2-n\Lambda^1+n(n-1)/2))}{24}+\\
+\frac {15(\Lambda^1+2\Lambda^2+\Lambda^3-n\Lambda^1-n\Lambda^2+\frac{n(n-1)\Lambda^1}2-
\frac{n(n-1)(n-2)}6)}{24} +\ldots.
\end{multline}
In Borel--Hirzebruch formalism \cite{BoHi1} we have $\ch =\sum e^{x_j}$ . Hence
$$
 \ch\gamma_t =\prod (1+t(e^{x_j}-1)).
$$
It follows from this expression that for a vector bundle $E\in \Vect(Z)$ we get  $\ch\gamma_k(E)\in H^{\ge 2k }(Z)$.
Now consider the class $\ch\gamma_K(E)$, where $K=(k_1,...,k_p)$ is a multiindex, and $\gamma_K(E)=\gamma_{k_1}(E)...\gamma_{k_p}(E)$. Then this expression is identically zero whenever $2|K|=2\sum k_j>\dim Z$.   
This implies that the terms denoted by dots in Eq.~\eqref{eq-fishka2} are actually equal to zero  provided that   $\dim Z\le 7$. Using this remark, we obtain the following expressions for the operation  $\psi$:

$\dim Z\le 3$: $\psi(E)=1+(E-n)/2$.

$\dim Z\le 5$:
\begin{multline}\label{eq-fishka3}
\psi(E)= 1+ \frac {E-n}2+\frac{-2(E^2-2nE+n^2)+7(E+\Lambda^2E-nE+n(n-1)/2)}{12} =\\
= \frac{3n^2-19n+24}{24}+
                           \frac{(-3n+13)}{12}E-\frac 1 6 {E\otimes E} +\frac 7{12}\Lambda^2 E
\end{multline}
and so on.
\end{proof}

\section{Index theorem}

The complexification of the cotangent bundle will be denoted by   $T^*_\mathbb{C} M=T^*M\otimes\mathbb{C}$ .

\begin{theorem}\label{th-index1}
Let $D$ be an elliptic operator.  Then its index is equal to
\begin{equation}\label{aaa}
\ind D=\int_{S^*M\times \mathbb{S}^1}\ch_{\psi(T^*_\mathbb{C}M)}[\sigma(D)],
\end{equation}
where the operation $\psi:K^0(M)\to K^0(M)\otimes \mathbb{Q}$ was defined in Sec.~\ref{sec-4}, and
$\ch$ stands for the twisted Chern character defined in Sec.~\ref{sec-3}.
\end{theorem}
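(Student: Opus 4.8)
\medskip
\noindent\emph{Proof strategy.} The plan is to reduce the index of $D$ to that of an elliptic $\psi$DO on a \emph{closed} manifold, where the classical Atiyah--Singer index theorem applies, and then to translate the answer into the noncommutative language of Sections~\ref{sec-3}--\ref{sec-4}. First I would carry out the reductions of Sections~5 and~6. Since the coefficients $D_k$ have order zero, $\sigma(D)$ is an invertible element of matrices over $C^\infty(S^*M)\rtimes\mathbb{Z}$, i.e.\ a $K_1$-class, and the auxiliary circle used in the construction of $[\sigma(D)]=[\mathcal{P}]$ in \eqref{eq-diff-2} is precisely the suspension turning it into a $K_0$-class. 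Following \cite{Sav9,Sav10}, one first realizes $D$ as a boundary value problem $\widetilde D$ on the cylinder $M\times[0,1]$ with $\ind D=\ind\widetilde D$, the boundary conditions at the two ends being intertwined by the shift $T$; closing the cylinder by this identification (cf.\ \cite{APS3,BBW2}) produces an elliptic $\psi$DO $\widehat D$ on the closed manifold $M_g$ obtained from $M\times[0,1]$ by gluing $(x,1)$ to $(g(x),0)$ (the mapping torus of $g$), still with $\ind D=\ind\widehat D$. Its principal symbol is built out of $\sigma(D)$, and under the natural identification of the symbol space of $\widehat D$ with $T^*M_g$ the symbol class $[\sigma(\widehat D)]\in K^0(T^*M_g)$ corresponds to the class $[\mathcal{P}]$ of \eqref{eq-diff-2}.

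Next I would apply the Atiyah--Singer index theorem on $M_g$: it expresses $\ind\widehat D$ as the pairing of $\ch[\sigma(\widehat D)]\,\Td(T^*_{\mathbb{C}}M_g)$ with the fundamental class of $T^*M_g$ (with the standard orientation and $2\pi i$-normalization), and it then remains to massage the right-hand side. Since $M_g$ fibres over $\mathbb{S}^1$ with fibre $M$ and $T\mathbb{S}^1$ is trivial, one has $\Td(T^*_{\mathbb{C}}M_g)=\Td(T^*_{\mathbb{C}}(M_g/\mathbb{S}^1))$, and the vertical bundle $T^*_{\mathbb{C}}(M_g/\mathbb{S}^1)$ is the bundle on $M_g$ associated with the $g$-bundle $(T^*_{\mathbb{C}}M,\partial g)$ by the mapping-torus construction. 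By Section~\ref{sec-4} we may replace this Todd class by $\ch\,\psi(T^*_{\mathbb{C}}(M_g/\mathbb{S}^1))$, and since $\psi$ is a polynomial in the Grothendieck operations it carries $g$-bundles to virtual $g$-bundles, so $\psi(T^*_{\mathbb{C}}(M_g/\mathbb{S}^1))$ corresponds to the $g$-bundle $\psi(T^*_{\mathbb{C}}M)$. It then remains to invoke the dictionary between the commutative index data over $T^*M_g$ and the crossed-product data over $S^*M\times\mathbb{S}^1$: under it $[\sigma(\widehat D)]\leftrightarrow[\mathcal{P}]=[\sigma(D)]$; tensoring the symbol by a bundle pulled back from $M_g$ corresponds to applying the homomorphism $\beta$ attached to the corresponding $g$-bundle (the twisted Chern character of Section~\ref{sec-3}); and the ordinary Chern character followed by integration over $T^*M_g$ corresponds to the Haefliger Chern character of Definition~\ref{chern1} followed by integration of a Haefliger top form over $S^*M\times\mathbb{S}^1$ --- the latter being well defined because $\partial g\times\mathrm{id}$ preserves orientation, so exact Haefliger forms integrate to zero. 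Assembling these identifications turns the Atiyah--Singer formula into $\ind D=\int_{S^*M\times\mathbb{S}^1}\ch_{\psi(T^*_{\mathbb{C}}M)}[\sigma(D)]$, which is \eqref{aaa}. As a consistency check, for symbol classes coming from $K_0(C^\infty(S^*M))$ the twisted Chern character reduces, by Proposition~\ref{prop-first-class1}, to multiplication by $\Td(T^*_{\mathbb{C}}M)$, and \eqref{aaa} becomes the usual Atiyah--Singer formula.

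The hard part will be this translation step, which is the content of Sections~5 and~6: making precise the dictionary between $K^0(T^*M_g)$ and $K_0(C^\infty(S^*M\times\mathbb{S}^1)\rtimes\mathbb{Z})$, verifying that $[\sigma(\widehat D)]$ really corresponds to $[\mathcal{P}]$, matching the two Chern characters and the two push-forward/integration maps, and keeping careful track of orientations and of the powers of $2\pi i$ (including the dimensional sign present in some normalizations of the Atiyah--Singer formula). A secondary point is that, although the analytic reductions of the first step are essentially available in \cite{Sav9,Sav10,APS3,BBW2}, one must still check that the operator they produce carries exactly the symbol entering \eqref{eq-diff-2}, so that no information is lost in passing to the closed manifold $M_g$.
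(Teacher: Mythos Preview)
Your overall strategy is the paper's strategy: reduce to an elliptic $\psi$DO on the mapping torus $M\times_g\mathbb{S}^1$, apply Atiyah--Singer there, and translate the answer back into the crossed-product language. Two points, however, deserve attention.

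First, a genuine omission: you propose to pass directly from an arbitrary elliptic $D$ to a boundary value problem on the cylinder. The cylinder/boundary-value-problem construction of \cite{Sav9,Sav10} (and of Section~5.2 here) only applies to operators of the very specific shape $D_0TP+(1-P)$. The paper therefore first performs a nontrivial reduction (Proposition~\ref{homot}): every elliptic $D$ is stably homotopic to such a special two-term operator, and since both sides of \eqref{aaa} are stable-homotopy invariants this reduction is harmless. Without this intermediate step your boundary-value-problem reduction does not get off the ground.

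Second, your ``dictionary'' for the final translation is more abstract than what the paper actually proves, and you should be aware that no $K$-theoretic isomorphism between $K^0(T^*M_g)$ and $K_0(C^\infty(S^*M\times\mathbb{S}^1)\rtimes\mathbb{Z})$ is established or used. Instead, after rewriting the Atiyah--Singer integral over $S^*M\times_g\mathbb{S}^1$ in terms of a concrete bundle $\mathcal{V}$ (Lemmas~\ref{lem-torus1} and~\ref{lem-5}) and replacing $\Td$ by $\ch\psi$, the paper carries out a direct Chern--Weil comparison at the level of differential forms: it writes down an explicit connection $\nabla'_{tor}$ on $\mathcal{V}\otimes\psi(T^*_\mathbb{C}M)$ and, using Lemma~\ref{lem-useful1}, an explicit noncommutative connection $\nabla_{tor}$ computing $\ch_{\psi(T^*_\mathbb{C}M)}[\sigma(D)]$, and then checks by hand that the two curvature forms differ only by terms not containing $dt$, so their integrals over $S^*M\times[0,1]$ agree. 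This form-level argument replaces the ``matching of Chern characters and push-forwards'' you envisage, and is where most of the work in Section~6 lies.
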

The right-hand side of Eq.~\eqref{aaa} will be referred to as the  \emph{topological index} of $D$
and denoted by $\ind_{top} D$.

\begin{proposition}
For an elliptic $\psi$DO $D$,  the topological index is equal to the topological index of Atiyah and Singer  (see~\cite{AtSi0}).
\end{proposition}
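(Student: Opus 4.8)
The plan is to specialize each ingredient of the topological index
$$
\ind_{top}D=\int_{S^*M\times\mathbb{S}^1}\ch_{\psi(T^*_\mathbb{C}M)}[\sigma(D)]
$$
to a $\psi$DO and to show that the crossed product disappears, so that $\ind_{top}D$ collapses to the classical Atiyah--Singer expression. First, the symbol: if $D$ is a $\psi$DO then $D=D_0T^0$ and $\sigma(D)$ has a single nonzero component, at $k=0$, namely the invertible matrix function $\sigma(D_0)\in C^\infty(S^*M,GL_N(\mathbb{C}))$. Hence every entry of the projection $\mathcal{P}(\varphi)$ lies in the degree-zero subalgebra $C^\infty(S^*M\times\mathbb{S}^1)\subset C^\infty(S^*M\times\mathbb{S}^1)\rtimes\mathbb{Z}$, so $[\sigma(D)]=[\mathcal{P}]$ is the image of a class $[\mathcal{P}]_0\in K^0(S^*M\times\mathbb{S}^1)$ under the natural map induced by this inclusion. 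Using the clutching description via the loop $\varphi\mapsto U_\varphi$ from the proof of Lemma~\ref{lem-useful1}, one checks directly that, under the splitting $K^0(S^*M\times\mathbb{S}^1)\cong K^0(S^*M)\oplus K^1(S^*M)$, the class $[\mathcal{P}]_0$ corresponds to the pair consisting of the trivial rank-$N$ bundle and (up to sign) the class $[\sigma(D_0)]\in K^1(S^*M)$ of the symbol; this is the usual encoding of the symbol of an elliptic $\psi$DO with equal bundles at the two ends as a suspended odd $K$-theory class.

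Next, the Chern character. Since $[\sigma(D)]$ comes from $K^0(S^*M\times\mathbb{S}^1)$, Proposition~\ref{prop-first-class1} (applied with $X=S^*M\times\mathbb{S}^1$) shows that the twisted Chern character is the ordinary product
$$
\ch_{\psi(T^*_\mathbb{C}M)}[\sigma(D)]=\ch[\mathcal{P}]_0\cdot\pi^*\ch\psi(T^*_\mathbb{C}M)=\ch[\mathcal{P}]_0\cdot\pi^*\Td(T^*_\mathbb{C}M),
$$
in ordinary cohomology, the last equality being the defining property $\ch\psi(x)=\Td x$ of the operation $\psi$; here $\pi$ denotes the composition $S^*M\times\mathbb{S}^1\to S^*M\to M$. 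Equivalently one may argue from Lemma~\ref{lem-useful1}: for a $\psi$DO the operator $\nabla_{tor}$ of \eqref{eqa4} interpolates between a connection $\nabla$ in $E=\psi(T^*_\mathbb{C}M)$ and its gauge transform $\sigma(D_0)^{-1}\nabla\sigma(D_0)$, and $\tau_E\exp(-\nabla_{tor}^2/2\pi i)$ is the corresponding Chern--Simons transgression form.

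Now integrate over the circle. The trivial-rank-$N$ summand of $\ch[\mathcal{P}]_0$ lies in $H^0$; multiplied by $\pi^*\Td(T^*_\mathbb{C}M)$, pulled back from the $n$-manifold $M$, it has total degree $\le n<2n=\dim(S^*M\times\mathbb{S}^1)$ and therefore integrates to zero. Fiber integration of the remaining part over the $\mathbb{S}^1$-factor replaces the suspended class $[\sigma(D_0)]$ by the odd Chern character $\ch^{odd}(\sigma(D_0))\in H^{odd}(S^*M)$ and yields
$$
\ind_{top}D=\int_{S^*M}\ch^{odd}(\sigma(D_0))\wedge\pi^*\Td(T^*_\mathbb{C}M).
$$
This is precisely the Atiyah--Singer formula for an elliptic $\psi$DO with equal bundles at both ends, written over the cosphere bundle; equivalently, applying the Thom isomorphism $K^0_c(T^*M)\cong\widetilde K^0(B^*M/S^*M)$ and the compatibility of the Chern character with it, the right-hand side equals the topological index $(2\pi i)^{-n}\langle\ch[\sigma(D_0)]\cdot\Td(T^*_\mathbb{C}M),[T^*M]\rangle$ of \cite{AtSi0}.

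The two reductions above (disappearance of the crossed product and identification of the twisted Chern character with $\ch\cdot\Td$) are routine. The main obstacle is the final comparison: matching the orientation conventions for $S^*M\times\mathbb{S}^1$, $S^*M$ and $T^*M$ and pinning down the normalization constant, and --- if one wants a self-contained argument rather than a citation --- verifying the $S^*M$-form of the Atiyah--Singer formula, which amounts to the transgression computation above together with the Thom isomorphism for $T^*M$.
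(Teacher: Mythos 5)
Your argument follows the same two-step approach as the paper's: use Proposition~\ref{prop-first-class1} to factor the twisted Chern character as $\ch[\sigma(D)]\,\Td(T^*_\mathbb{C}M)$ (since for a $\psi$DO the class $[\sigma(D)]$ comes from the degree-zero subalgebra), and then identify $\int_{S^*M\times \mathbb{S}^1}\ch[\sigma(D)]\Td(T^*_\mathbb{C}M)$ with the Atiyah--Singer topological index. The paper leaves this last identification as an assertion, whereas you correctly spell it out via the Künneth splitting of $K^0(S^*M\times\mathbb{S}^1)$ and fiber integration over $\mathbb{S}^1$ to reach the odd Chern character of $\sigma(D_0)$ on $S^*M$ --- a useful expansion of detail, but the same route.
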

\begin{proof}
In our situation we have equalities  
$$
\ch_{\psi(T^*_\mathbb{C}M)}[\sigma(D)]=\ch[\sigma(D)]\ch\psi(T^*_\mathbb{C}M)=
\ch[\sigma(D)]\Td(T^*_\mathbb{C}M).
$$
Here the second equality follows from Proposition~\ref{prop-first-class1} and the fact that  $D$ is a $\psi$DO.
The second equality follows from the   definition of     $\psi$. These equalities show that the topological index in this case is equal to  
$$
\ind_{top} D=\int_{S^*M\times \mathbb{S}^1} \ch[\sigma(D)]\Td(T^*_\mathbb{C}M).
$$
The last expression is actually the Atiyah--Singer index formula for the index of a pseudodifferential operator $D$.
\end{proof}

Theorem  \ref{th-index1} will be proved in subsequent sections. Here we give the scheme of the proof.  
\begin{enumerate}
\item First in Sec.~\ref{sec-5} we give a reduction of the operator (\ref{eq-oper1}) to some special two-term operator,  whose index is equal to the index of a certain elliptic $\psi$DO on a special smooth closed manifold: the mapping torus of the diffeomorphism  $g:M\to M$.
\item Then we prove in Sec.~\ref{sec-6} that the index of this $\psi$DO on the torus (computed using the Atiyah--Singer index formula) is equal to the topological index of the  operator   (\ref{eq-oper1}) on $M$.
\end{enumerate}

\section{Reduction of a noncommutative operator to a $\psi$DO on a closed manifold}\label{sec-5}

\subsection{Reduction to a special two-term operators}\label{sec-2-term}

In this subsection we obtain a reduction (stable homotopy) of the operator (\ref{eq-oper1}) 
to an operator of the same type, but of a simpler form.

Given matrix projections  
$$
 p,q\in C^\infty(S^*M,\Mat_N(\mathbb{C})),\quad p^2=p,q^2=q,
$$
over $S^*M$, we choose some $\psi$DOs with symbols equal to $p,q$ and denote them by $P,Q$.
\begin{definition}\label{def-1}A \emph{special two-term operator} is an operator of the form
\begin{equation}\label{eq-kvese2}
D=QD_0TP+(1-Q)D_1(1-P): H^s(M,\mathbb{C}^N)\lra H^s(M,\mathbb{C}^N),
\end{equation}
where $H^s$ is a Sobolev space, while
$$
D_0:H^s(M,\mathbb{C}^N)\lra H^s(M,\mathbb{C}^N),\quad D_1:H^s(M,\mathbb{C}^N)\lra H^s(M,\mathbb{C}^N),
$$
are $\psi$DOs of order zero such that their symbols define  vector bundle isomorphisms  
\begin{equation}\label{eq-sigmaA}
  \sigma(D_0): (\partial g)^*\im p  \lra \im q, \qquad
  \sigma(D_1): \im (1-p)  \lra \im (1-q), \\
\end{equation}
over $S^*M$. These vector bundles are defined as the ranges of projections  $p,q,1-p,1-q$.
\end{definition}
A special two-term operator is elliptic in the sense of definition~\ref{def-ell-1}. 
Moreover, an almost-inverse operator can be defined by the formula  
$$
D^{-1}=PT^{-1}D_0^{-1}Q+(1-P)D_1^{-1}(1-Q).
$$

A \emph{homotopy} of elliptic operators is a family    $\{D_t\},$ $t\in [0,1]$
of elliptic operators such that the families of their coefficients are piecewise smooth functions of the parameter   $t$  and the number  of nonzero components of the family and   its almost inverse family are uniformly bounded. Two elliptic operators  are  \emph{stably homotopic}  if there exists a homotopy between their direct sums with identity operators acting in sections of some bundles.  

\begin{proposition}{(cf. \cite{PiVo1,AnLe1})}\label{homot} The following statements hold:

1. An arbitrary elliptic operator is stably homotopic to some special two-term operator. 

2. An arbitrary special two-term operator can be reduced  by a stable homotopy and direct sum with operator 
$T\oplus T\oplus ...\oplus T$ to a direct sum of an elliptic $\psi$DO and a special two-term operator of the form  
\begin{equation}\label{eq-kvese2z}
D=PD_0TP+(1-P): H^s(M,\mathbb{C}^N)\lra H^s(M,\mathbb{C}^N),
\end{equation}
i.e., in \eqref{eq-kvese2} one can suppose that  $Q=P$ and $D_1=1.$
\end{proposition}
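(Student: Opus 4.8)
The plan is to reduce everything to the structure theory of units of the crossed product, following \cite{PiVo1,AnLe1}. Two elementary facts are used throughout. First, two elliptic operators with the same symbol are homotopic: the affine path between them has constant (hence invertible) symbol and a uniformly bounded number of nonzero components. Second, for elliptic operators $A,B$ there is a stable homotopy $AB\oplus\mathrm{Id}\sim A\oplus B$, realized by $\mathrm{diag}(A,\mathrm{Id})\,R_t\,\mathrm{diag}(\mathrm{Id},B)\,R_{-t}$, $t\in[0,\pi/2]$, with $R_t$ a scalar rotation (each member being elliptic with boundedly many components). In particular, a symbol factorization $\sigma(D)=\sigma(D')\sigma(D'')$ yields a stable homotopy $D\sim D'\oplus D''$.

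\emph{Statement 1.} Let $D=\sum_kD_kT^k$ be elliptic, i.e.\ $\sigma(D)$ is a unit of $M_N(C^\infty(S^*M)\rtimes\mathbb{Z})$. By the linearization technique of \cite{PiVo1,AnLe1} — composing with a power of $T$ and stabilizing to reduce to a noncommutative polynomial $\sum_{k=0}^n\sigma(D_k)T^k$, then applying the companion matrix construction to $\mathrm{diag}(\sigma(D),\mathbf 1,\dots,\mathbf 1)$ through a chain of elementary row and column operations, each given by a path of elementary matrices $\mathbf 1+t\,e_{ij}\lambda$, $t\in[0,1]$ — one arrives at a unit $b_1T+b_0$ linear in $T$ with invertible ``leading'' coefficient $b_1$ (everything read in the noncommutative sense, the codifferential twist of \eqref{eq-compos1} carried along, scalar factors kept to the left of $T$). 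Writing $b_1T+b_0=b_1(T+c)$, the factor $b_1$ is the symbol of an elliptic $\psi$DO, while $T+c$ is, by the invertibility (hyperbolicity) analysis of \cite{AnLe1}, homotopic to a unit $q\,uT+(1-q)$ with $q$ a $\partial g$-invariant projection over $S^*M$ and $u\colon(\partial g)^*\im q\to\im q$ an isomorphism; by \eqref{eq-compos1} this last unit is exactly the symbol of a special two-term operator (Definition~\ref{def-1}) with $P=Q$, $D_1=\mathrm{Id}$ and $D_0$ a $\psi$DO realizing $u$. Combining the resulting factorization of $\sigma(D)$ with the two elementary facts (and noting that a direct sum of a $\psi$DO with a special two-term operator is again a special two-term operator) gives statement~1.

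\emph{Statement 2.} Let $D=QD_0TP+(1-Q)D_1(1-P)$ be a special two-term operator. The isomorphisms \eqref{eq-sigmaA} give $(\partial g)^*\im p\cong\im q$ and $\im(1-p)\cong\im(1-q)$, whence $[\im p]=[\im q]$ in $K^0(S^*M)$. Replacing $D$ by $D\oplus T\oplus\dots\oplus T$ ($k$ copies), which changes $p,q$ into $p\oplus\mathbf 1_k,\ q\oplus\mathbf 1_k$ and leaves $1-p,\ 1-q,\ D_1$ intact, we may take $k$ large enough that $\im p\cong\im q$ as genuine bundles, via some $v$. Then $V:=v\oplus\sigma(D_1)$ is an invertible endomorphism with $VpV^{-1}=q$; for an elliptic $\psi$DO $\widehat V$ with $\sigma(\widehat V)=V$, formula \eqref{eq-compos1} shows that $\widehat V^{-1}D$ is again a special two-term operator, now with $Q=P$. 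Since $D\sim\widehat V\oplus\widehat V^{-1}D$ by the elementary facts and $\widehat V$ is an elliptic $\psi$DO, we may assume $D=PD_0TP+(1-P)D_1(1-P)$. A direct computation with \eqref{eq-compos1} then gives $\sigma(D)=\sigma(D')\,\sigma(D'')$, where $D'=PD_0TP+(1-P)$ has the required form \eqref{eq-kvese2z} and $D''$ is the elliptic $\psi$DO with symbol $p+(1-p)\sigma(D_1)(1-p)$. By the elementary facts $D\sim D'\oplus D''$; unwinding the two reductions above exhibits the original $D\oplus T\oplus\dots\oplus T$ as stably homotopic to $(\widehat V\oplus D'')\oplus D'$, an elliptic $\psi$DO summed with an operator of the form \eqref{eq-kvese2z}.

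\emph{Main obstacle.} The delicate part is statement~1: the companion matrix linearization and, above all, the passage from a linear unit $T+c$ to a ``block'' special two-term symbol must be carried out inside the noncommutative crossed product with the codifferential $\partial g$ tracked at every step, and the latter rests on the invertibility/hyperbolicity analysis of \cite{AnLe1}. Statement~2, once the bundle isomorphisms have been stabilized to $\im p\cong\im q$, is a purely formal computation with the product \eqref{eq-compos1}.
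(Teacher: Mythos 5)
Your argument for part~2 coincides with the paper's: stabilize by $T\oplus\cdots\oplus T$ so that $[\im p]=[\im q]$ lifts to a genuine bundle isomorphism, factor $D$ through an elliptic $\psi$DO whose symbol is built from that isomorphism together with $\sigma(D_1)$, and invoke the stable homotopy of a composition to a direct sum. You do the reduction in two factorizations (first to $Q=P$, then to $D_1=\mathbf 1$), whereas the paper does it in one step, $D=(D')^{-1}\bigl(D'D_0TP+(1-P)\bigr)$ with $\sigma(D')=a\oplus\sigma(D_1)^{-1}$; the content, and the verification via \eqref{eq-compos1}, are the same.

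For part~1 the paper offers no argument of its own --- it invokes the explicit homotopy constructed in \cite{SaSt25} --- so a companion-matrix linearization in the style of \cite{PiVo1,AnLe1} is a reasonable alternative route. As written, however, it contains a step that fails. You assert that elementary operations on $\mathrm{diag}(\sigma(D),\mathbf 1,\dots,\mathbf 1)$ terminate at a linear unit $b_1T+b_0$ with $b_1$ \emph{invertible}, then split off $b_1$ as an elliptic $\psi$DO and reduce the remaining $T+c$. This normalization is not achievable in general, and it is incompatible with the form you are aiming for: a special two-term symbol $q\sigma(D_0)Tp+(1-q)\sigma(D_1)(1-p)$ has $T$-coefficient $q\sigma(D_0)(\partial g)^*p$, non-invertible whenever $p\ne\mathbf 1$, and no stable homotopy through units can repair that. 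Concretely, take $g=\mathrm{id}$ so that the crossed product is $C^\infty(S^*M\times\mathbb{S}^1)$: under the K\"unneth decomposition of $K^1(S^*M\times\mathbb{S}^1)$, the $K^0(S^*M)$-component of the class of $pT+(1-p)$ is essentially $[\im p]$; while for any $b_1T+b_0$ with $b_1$ invertible and $b_1,b_0$ constant along the circle, a Riesz-projection homotopy of $T+b_1^{-1}b_0$ shows that this component is a multiple of the trivial class $[\mathbf 1]$. Hence for $\im p$ stably nontrivial the claimed normal form cannot be reached even stably. The linearization must be performed on the pencil $b_1T+b_0$ with no invertibility assumed on either coefficient --- this is exactly what the homotopy of \cite{SaSt25} does, and what the structure theory in \cite{PiVo1,AnLe1} works with --- and the passage from that general pencil to two-term form is precisely the step you flag as ``the delicate part''; the factorization $b_1(T+c)$ is not available to start it.
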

\begin{proof}
1. Indeed, a direct computation shows that the homotopy defined in the paper   \cite{SaSt25}
gives the desired result, i.e.,  the homotopy   preserves ellipticity and we obtain a special two-term operator at the end of the homotopy.   

2. By  \eqref{eq-sigmaA}, we have the vector bundle isomorphism
$$
\im(1-p)\simeq \im(1-q).
$$
This implies that $[\im p]=[\im q]\in K(S^*M)$.  If the ranks of the projections are large enough   (this can be achieved by a direct sum of the special two-term operator and some operator of the form  $T\oplus T\oplus ...\oplus T$), then there exists a vector bundle isomorphism  $a:\im q\to \im p$. Consider an elliptic $\psi$DO  $D'$ with the symbol
$$
\sigma(D')=a\oplus (\sigma(D_1))^{-1}: \im q\oplus\im(1-q)\lra \im p\oplus \im(1-p).
$$
Then we obtain the factorization
$$
D=D_0TP+D_1(1-P)= (D')^{-1}(D' D_0TP+(1-P)) 
$$
modulo compact operators. This proves the proposition, since a composition of operators is stably homotopic to their direct sum.  
\end{proof}

\subsection{Reduction to a boundary value problem}

Let us consider the elliptic special two-term operator 
\begin{equation}\label{eq-2-term1}
 D=D_0TP+(1_N-P):C^\infty(M,\mathbb{C}^N)\lra C^\infty(M,\mathbb{C}^N).
\end{equation}
Recall that the ellipticity condition in this case means that the symbol of $D_0$ defines an isomorphism
\begin{equation}\label{ell-sim1}
\sigma(D_0):(\partial g)^*\im \sigma(P)\longrightarrow \im \sigma(P)
\end{equation}
of vector bundles over $S^*M$. Here the vector bundles are defined by the symbol of  $P$.

On the cylinder $M\times[0,1]$ with coordinates $x$ and  $t$   consider the boundary value problem 
(see \cite{Sav9,Sav10})
\begin{equation}\label{eq-bvp1}
\left\{\begin{array}{ll}
\displaystyle \left(\frac\partial{\partial t}+(2P-1_N)\sqrt{\Delta_M}\right)u=f_1,
& u\in H^s(M\times[0,1],\mathbb{C}^N)\vspace{2mm}\\
\displaystyle D_0TPu|_{t=0}-u|_{t=1}=f_2,
&f_1\in  H^{s-1}(M\times[0,1],\mathbb{C}^N),\;\;
f_2\in  H^{s-1/2}(M,\mathbb{C}^N),
\end{array}
\right.
\end{equation}
where $\Delta_M$ is the Laplace operator defined by a metric on $M$.
This boundary value problem, denoted for brevity by 
$(\mathcal{D}, {B})$, is elliptic and one has (in {\em op.cit.})
\begin{equation}\label{eq-duha1}
\ind D = \ind (\mathcal{D}, {B}).
\end{equation}

\subsection{Homotopy of the boundary condition}

Methods of the theory of boundary value problems (e.g., see \cite{Hor3,SaScS4}) enable one to simplify the boundary operator in Eq.~\eqref{eq-bvp1} using homotopies of elliptic boundary value problems.

Namely, we start with the {\em rotation homotopy}
$$
P(\varphi)=\left(
                    \begin{array}{cc}
                      (\cos^2\varphi) P & (\cos\varphi\sin\varphi){g^{-1}}^*( D_0^{-1}P)\\
                       (\cos\varphi\sin\varphi) {g^{-1}}^*( PD_0) & (\sin^2\varphi) {g^{-1}}^*(P)
                    \end{array}
                  \right),\qquad \varphi\in [0,\pi/2],
$$
connecting the almost projections  $P\oplus 0$ and  $0\oplus {g^{-1}}^*(P)$.  
For all $\varphi\in [0,\pi/2]$  the operator $P(\varphi)$ is an almost-projection, i.e., its symbol is a projection. To check this property, it is useful to represent this homotopy in the form  
\begin{equation}\label{eq-unit1}
P(\varphi)=U_\varphi
                               \left(
                    \begin{array}{cc}
                        P &0\\
                       0 &0
                    \end{array}
                  \right) U_\varphi^{-1}
\end{equation}
in terms of the family of almost-invertible operators
$$
U_\varphi=\left(
                    \begin{array}{cc}
                     ( \cos \varphi) P & (-\sin\varphi){g^{-1}}^*( D_0^{-1}P)\\
                       ( \sin\varphi) {g^{-1}}^*( PD_0) & (\cos\varphi) {g^{-1}}^*(P)
                    \end{array}
                  \right)+\left(
                    \begin{array}{cc}
                        1_N-P &0\\
                       0 & 1_N-{g^{-1}}^*(P)
                    \end{array}
                  \right).
$$
Here we have an equality  
$ U_\varphi^{-1}=U_{-\varphi}$ modulo compact operators.

Then we define the homotopy of operators
$$
D(\varphi)=\left(
                    \begin{array}{cc}
                      (\cos \varphi) D_0g^*(P)& (\sin\varphi) P\\
                      0 & 0
                    \end{array}
                  \right)=\left(
                    \begin{array}{cc}
                       D_0g^*(P)&0 \\
                      0 & 0
                    \end{array}
                  \right)g^*(U_\varphi^{-1}).
$$
Finally, we define the homotopy of boundary value problems on the cylinder 
\begin{equation}\label{eq-bvp2}
\left\{\begin{array}{l}
\displaystyle\left(\frac\partial{\partial t}+[2P(\varphi\chi(t))-1_{2N}]\sqrt{\Delta_M}\right)U=F_1,
\vspace{2mm}\\
\displaystyle D(\varphi)TU|_{t=0}-U|_{t=1}=F_2,
\end{array}
\right.
\end{equation}
and denote this homotopy by $(\mathcal{D}_\varphi, {B}_\varphi)$.
Here the unknown function and the right-hand sides belong to the spaces  
$U\in  H^s(M\times[0,1],\mathbb{C}^{2N}) ,$  $  F_1\in  H^{s-1}(M\times[0,1],\mathbb{C}^{2N}),$
$F_2\in  H^{s-1/2}(M,\mathbb{C}^{2N})$, and
$\chi(t)$ is a smooth nonincreasing function equal to $1$ if $t\le 1/3$ and  $0$ if $t\ge 2/3$.

\begin{lemma}\label{lemma-alpha1}
The homotopy \eqref{eq-bvp2} consists of elliptic boundary value problems.
\end{lemma}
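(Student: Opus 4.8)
The plan is to verify the two standard conditions for ellipticity of a boundary value problem of the Atiyah--Patodi--Singer type (cf. \cite{Sav9,Sav10,Hor3,SaScS4}): (i) interior ellipticity of the operator $\mathcal{D}_\varphi=\frac\partial{\partial t}+[2P(\varphi\chi(t))-1_{2N}]\sqrt{\Delta_M}$ on the cylinder $M\times[0,1]$, and (ii) the Shapiro--Lopatinskii condition at the two boundary components $t=0$ and $t=1$, which here amounts to the invertibility of the boundary operator $B_\varphi$ on the appropriate spectral subspaces determined by the conormal symbol of $\mathcal{D}_\varphi$. I would carry this out in that order and uniformly in $\varphi\in[0,\pi/2]$, since a homotopy of elliptic problems requires ellipticity at every parameter value.

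First I would treat interior ellipticity. The principal symbol of $\mathcal{D}_\varphi$ at a covector $(\xi,\rho)\in T^*(M\times[0,1])$ is $i\rho+[2\sigma(P)(\varphi\chi(t)\text{-twisted})-1_{2N}]|\xi|$, and since $P(\varphi\chi(t))$ is an almost-projection (its symbol is a genuine projection, by the identity \eqref{eq-unit1} realizing it as a conjugate of $\mathrm{diag}(P,0)$ by the almost-invertible $U_\varphi$), the bracketed term $2\sigma(P(\cdot))-1_{2N}$ is a self-adjoint involution; hence $i\rho+[2\sigma(P(\cdot))-1]|\xi|$ is invertible whenever $(\xi,\rho)\neq 0$. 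This is uniform in $t$ and $\varphi$. Along the way I would record that the conormal (indicial) family of $\mathcal{D}_\varphi$ at $t=0$ and $t=1$ has no spectrum on the relevant line, so that the Calderón projections onto boundary data are well defined; the positive/negative spectral subspaces of the tangential operator $[2P(\varphi\chi(t))-1_{2N}]\sqrt{\Delta_M}$ are exactly $\im\sigma(P(\cdot))$ and $\im(1-\sigma(P(\cdot)))$, so at $t=0$ the Cauchy data space is governed by $\im\sigma(P)\oplus 0$ (since $\chi(0)=1$ makes $P(\varphi\chi(0))=P(\varphi)$) and at $t=1$ by $0\oplus\im({g^{-1}}^*\sigma(P))$ after passing through the rotation (since $\chi(1)=0$).

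The substance is the Shapiro--Lopatinskii condition, which for the boundary operator $D(\varphi)TU|_{t=0}-U|_{t=1}=F_2$ reduces to checking that the induced map from the boundary data allowed by the interior equation at $t=0$ to those at $t=1$ is an isomorphism of vector bundles over $S^*M$. Concretely, I would use the factorizations already written down: $P(\varphi)=U_\varphi\,\mathrm{diag}(P,0)\,U_\varphi^{-1}$ and $D(\varphi)=\mathrm{diag}(D_0g^*(P),0)\,g^*(U_\varphi^{-1})$, together with the defining isomorphism \eqref{ell-sim1}, namely $\sigma(D_0):(\partial g)^*\im\sigma(P)\xrightarrow{\ \sim\ }\im\sigma(P)$. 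Restricting $D(\varphi)$ to the range of the projection $P(\varphi)$ that appears at $t=0$ and composing with the identification of the $t=1$ data, the $U_\varphi$'s and $g^*(U_\varphi^{-1})$'s cancel on symbol level up to the twist, leaving precisely $\sigma(D_0)$ on $\im\sigma(P)$, which is invertible; the complementary block is the identity on $\im(1-\sigma(P))$, which is trivially invertible. Doing this in terms of symbols over $S^*M$ and keeping track of the $(\partial g)^*$-twist is routine once the bookkeeping is set up. The main obstacle I anticipate is exactly this bookkeeping: organizing the symbol computation so that the cutoff $\chi(t)$, the rotation parameter $\varphi$, the shift $T$ (equivalently the pullback $g^*$), and the codifferential twist $(\partial g)^*$ all interact correctly and so that invertibility is manifestly uniform in $\varphi\in[0,\pi/2]$ and continuous (piecewise smooth) in the data. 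Once the two endpoint cases $\varphi=0$ (where one recovers \eqref{eq-bvp1}, already known elliptic by \eqref{eq-duha1}) and $\varphi=\pi/2$ are checked and the interpolation is seen to preserve the Shapiro--Lopatinskii isomorphism, the lemma follows.
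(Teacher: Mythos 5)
Your plan correctly identifies the two things to check (interior ellipticity and the Shapiro--Lopatinskii condition) and correctly identifies the key algebraic ingredients: the factorization $P(\varphi)=U_\varphi\operatorname{diag}(P,0)U_\varphi^{-1}$, the formula $D(\varphi)=\operatorname{diag}(D_0g^*(P),0)\,g^*(U_\varphi^{-1})$, and the isomorphism $\sigma(D_0):(\partial g)^*\im\sigma(P)\to\im\sigma(P)$. The interior-ellipticity check is fine. But there is a genuine gap in the Shapiro--Lopatinskii part, and it is precisely the item you defer as ``bookkeeping.''

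The condition in \eqref{eq-bvp2} is nonlocal in two distinct senses: it couples data at the two boundary components $t=0$ and $t=1$, and the operator $T=g^*$ appearing in $D(\varphi)TU|_{t=0}$ is not a pseudodifferential operator on $M$ (it is a shift operator, with no local symbol on $S^*M$). Consequently the boundary operator is not of the type for which the Shapiro--Lopatinskii condition is \emph{a priori} formulated, and the assertion that ellipticity ``reduces to checking that the induced map on boundary data over $S^*M$ is an isomorphism'' requires justification; it is not a routine symbol computation for a standard boundary value problem. The paper handles this by an explicit change of unknowns near the boundary, $V(t)=TU(t)$ and $W(t)=U(1-t)$, which converts the problem into a genuinely local system of first-order equations on a one-sided collar $\{0\le t<1/2\}$ with a local boundary condition $D(\varphi)V|_{t=0}-W|_{t=0}=F_2$; only after this reduction (and citing the nonlocal BVP framework of \cite{SaSt10}) does one form the Calder\'on bundle $L_+\subset S^*M\times\mathbb{C}^{4N}$, compute $L_+=\im[(\partial g)^*\sigma(U_\varphi P(0))]\oplus\im[1_{2N}-\sigma(P(0))]$, and read off the Shapiro--Lopatinskii isomorphism from \eqref{eq-unit1} and \eqref{ell-sim1}. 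You would need to carry out this reduction (or an equivalent conjugation of one of the interior operators by $T$) to make the symbol computation you sketch legitimate. A smaller point: citing \eqref{eq-duha1} for ellipticity at $\varphi=0$ is not quite apt, since that equation is an index identity rather than an ellipticity statement; the paper's proof is self-contained for all $\varphi$ at once.
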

\begin{proof}
(cf. \cite{Sav9}).
The boundary condition in \eqref{eq-bvp2} relates the values of $U$ at $t=0$ and  $t=1$,
i.e., it is a nonlocal condition. Nonlocal boundary value problems of this type were considered in  \cite{SaSt10}. Let us show that this problem is elliptic in the sense of the cited paper.  Indeed, let us reduce the nonlocal problem to a local problem in a neighborhood of the boundary of the cylinder.  To this end we introduce the following unknown functions   $V$ and $W$:
\begin{equation}\label{eq-17}
    V(t)=TU(t),\quad  W(t)=U(1-t).
\end{equation}

In a neighborhood of the boundary  the system \eqref{eq-bvp2} is written in the following equivalent form
\begin{equation}\label{eq-bvp-loc}
 \left\{%
\begin{array}{l}
   T\left(\displaystyle \frac{\partial}{\partial t}+
   [2P(\varphi\chi(t))-1_{2N}]\sqrt{\Delta_M}\right)T^{-1} V(t)=TF_1(t), \vspace{2mm}\\
   \left(\displaystyle -\frac{\partial}{\partial t}+[2P(0)-1_{2N}]\sqrt{\Delta_M}\right) W(t)= F_1(1-t),\vspace{2mm} \\
  D(\varphi)V|_{t=0}-W|_{t=0}=F_2. \\
\end{array}%
\right.\quad 0\le t<1/2
\end{equation}
Note that the system \eqref{eq-bvp-loc} is already local. The first two equations of the system are elliptic. Let us show that the boundary value problem    (\ref{eq-bvp-loc}) is elliptic, i.e., it satisfies the Shapiro--Lopatinskii condition  (e.g., see \cite{Hor3}). To prove this, we consider the Calderon bundle   \cite{Hor3} (see also \cite{SaSt10})
$$
 L_+\subset S^*M\times \mathbb{C}^{4N}
$$
of the main operator in~\eqref{eq-bvp-loc}. A direct computation shows that this bundle is equal to  
\begin{multline}\label{eq-calderon}
    L_+=\im \bigl[(\partial g)^*\sigma(P(\varphi))\bigr]\oplus \im\bigl[1_{2N}-\sigma(P(0))\bigr]=\\
\im \bigl[(\partial g)^* \sigma(U_\varphi P(0))\bigr]\oplus \im\bigl[1_{2N}-\sigma(P(0))\bigr].
\end{multline}
Here the second equality follows from   \eqref{eq-unit1}.

The Shapiro--Lopatinskii condition for the problem~\eqref{eq-bvp-loc} is equivalent to the requirement that the symbol of the boundary operator, i.e., the vector bundle homomorphism  
\begin{equation}\label{eq-shalo1}
\begin{array}{ccc}
  L_+ &  \lra & S^*M\times\mathbb{C}^{2N}, \vspace{1mm}\\
  (V,W) & \longmapsto & \sigma(D(\varphi))V-W  \\
\end{array}
\end{equation}
is an isomorphism. This requirement is satisfied in our case, since   \eqref{eq-calderon} implies that
$$
 W\in \im [1_{2N}-\sigma(P(0))],
$$
and the mapping
$$
\sigma(D(\varphi))=\sigma\left[\left(
                    \begin{array}{cc}
                       D_0g^*(P)&0 \\
                      0 & 0
                    \end{array}
                  \right)g^*(U_\varphi^{-1})\right]:
\im \bigl[(\partial g)^* \sigma( U_\varphi P(0)  )\bigr]\longrightarrow \im\bigl[\sigma (P(0))\bigr]
$$
is an isomorphism of vector bundles by \eqref{eq-unit1} and \eqref{ell-sim1}.

So, the Shapiro--Lopatinskii condition is satisfied and the problem~\eqref{eq-bvp-loc} is elliptic. 
Hence, \eqref{eq-bvp2} defines a Fredholm operator.
\end{proof}

It follows from Lemma~\ref{lemma-alpha1}  and Eq.~\eqref{eq-duha1} that
$$
\ind D=\ind (\mathcal{D}_0, {B}_0)=\ind(\mathcal{D}_{\pi/2}, {B}_{\pi/2}).
$$

Let us now consider the boundary value problem  
\begin{equation}\label{eq-bvp3}
\left\{\begin{array}{l}
\displaystyle\left(\frac\partial{\partial t}+[2P(\frac\pi 2\chi(t))-1_{2N}]\sqrt{\Delta_{M,h(t)}}\right)U=F_1,\vspace{2mm}\\
\displaystyle \left( \begin{array}{cc} 0&1 \\ 1& 0 \end{array} \right)
                       TU|_{t=0}-U|_{t=1}=F_2,
\end{array}
\right.
\end{equation}
where  $\Delta_{M,h(t)}$ is the Laplace operator on $M$ for  a family of metrics $h(t)$ smoothly depending on  $t$.

\begin{lemma}The problem \eqref{eq-bvp3} is elliptic and its index is equal to the index of the problem  
$(\mathcal{D}_{\pi/2}, {B}_{\pi/2})$.
\end{lemma}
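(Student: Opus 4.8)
The plan is to establish ellipticity of \eqref{eq-bvp3} by repeating the argument of Lemma~\ref{lemma-alpha1}, and then to deduce the index equality by realising \eqref{eq-bvp3} as the endpoint of a homotopy of elliptic boundary value problems issuing from $(\mathcal{D}_{\pi/2},{B}_{\pi/2})$. For ellipticity one localises the nonlocal problem near $\partial(M\times[0,1])$ by the substitution $V(t)=TU(t)$, $W(t)=U(1-t)$, exactly as in Lemma~\ref{lemma-alpha1}; the interior operator is elliptic, and since for every $t$ the operator $\sqrt{\Delta_{M,h(t)}}$ has positive scalar principal symbol $|\xi|_{h(t)}$, the $(\pm1)$-eigenbundle splitting of $2P(\cdot)-1_{2N}$ does not depend on $t$, so the Calderon bundle $L_+\subset S^*M\times\mathbb{C}^{4N}$ is exactly the one in \eqref{eq-calderon}. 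The Shapiro--Lopatinskii condition then reads: the bundle map
\[
L_+\lra S^*M\times\mathbb{C}^{2N},\qquad (V,W)\longmapsto\left(\begin{array}{cc}0&1\\1&0\end{array}\right)V-W,
\]
is an isomorphism. On $L_+$ the component $V$ lies in $\im[(\partial g)^*\sigma(P(\pi/2))]=\{0\}\oplus W_0$, where $W_0\subset\mathbb{C}^N$ has rank $\rk p$; by Lemma~\ref{lemma-alpha1}, $\sigma(D(\pi/2))=\left(\begin{smallmatrix}0&p\\0&0\end{smallmatrix}\right)$ restricts to an isomorphism from $\{0\}\oplus W_0$ onto $\im\sigma(P(0))=\im p\oplus\{0\}$, so $p$ is injective on $W_0$ and $W_0\cap\im(1-p)=0$. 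Since the block swap carries $\{0\}\oplus W_0$ isomorphically onto $W_0\oplus\{0\}$, a complement in $\mathbb{C}^{2N}$ of $\im[1_{2N}-\sigma(P(0))]=\im(1-p)\oplus\mathbb{C}^N$, the displayed map is injective; the two bundles having rank $2N$, it is an isomorphism, and \eqref{eq-bvp3} defines a Fredholm operator.

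For the index I would connect $(\mathcal{D}_{\pi/2},{B}_{\pi/2})$ to \eqref{eq-bvp3} through a homotopy of elliptic boundary value problems. Choose a smooth path $h_s(t)$, $s\in[0,1]$, of Riemannian metrics on $M$ with $h_0(t)$ equal to the metric of $\Delta_M$ and $h_1(t)=h(t)$ (possible, the space of metrics being convex), and in \eqref{eq-bvp3} replace $h(t)$ by $h_s(t)$ and the block swap $\left(\begin{smallmatrix}0&1\\1&0\end{smallmatrix}\right)$ by $\left(\begin{smallmatrix}0&(1-s)P+s\,1_N\\ s\,1_N&0\end{smallmatrix}\right)$; at $s=0$ this is precisely $(\mathcal{D}_{\pi/2},{B}_{\pi/2})$ and at $s=1$ it is \eqref{eq-bvp3}. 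Every member of the family is elliptic: the Calderon bundle is still \eqref{eq-calderon}, and its boundary symbol sends $(V,W)\in L_+$ --- with $V=(0,v)$, $v\in W_0$, and $W=(w_1,w_2)$, $w_1\in\im(1-p)$ --- to $\left((1-s)pv+sv-w_1,-w_2\right)$; if this vanishes then, applying $p$, one gets $pv=0$, hence $v=0$ by injectivity of $p|_{W_0}$, so the map is injective and therefore an isomorphism. By homotopy invariance of the index of elliptic boundary value problems (see \cite{Hor3,SaSt10}) the indices of all members of the family coincide, which is the assertion.

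The only delicate point is the Shapiro--Lopatinskii verification for the block-swap boundary condition: one must pin down the subbundle $W_0=\im[(\partial g)^*\sigma(P(\pi/2))]$ precisely enough to extract, from the isomorphism property of $\sigma(D(\pi/2))$ already secured in Lemma~\ref{lemma-alpha1}, the transversality $W_0\cap\im(1-p)=0$, and to observe that this injectivity persists along the whole path --- which is automatic, since the new lower-left block $s\,1_N$ acts only on the first $\mathbb{C}^N$-summand, on which $V$ has no component. Everything else is a routine repetition of the arguments of Lemma~\ref{lemma-alpha1} together with standard homotopy invariance of the index.
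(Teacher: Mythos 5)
Your proof is correct and follows essentially the same route as the paper: a linear homotopy interpolating both the metrics in the Laplacian and the boundary operator, with the Calder\'on bundle unaffected by the metric and the Shapiro--Lopatinskii condition checked along the way. The only cosmetic difference is that the paper observes that the boundary symbol map \eqref{eq-shalo1} restricted to $L_+$ is \emph{literally constant} along the interpolation of boundary operators (since $V=(0,v)$ with $v\in\im p$, so that $[(1-s)p+s]v=v$ for all $s$), whereas you re-verify injectivity at each $s$ --- but these are the same computation in two guises.
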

\begin{proof}
It suffices to show that the linear homotopy connecting these two boundary value problems preserves ellipticity.  

First, the  linear homotopy between the main operators of \eqref{eq-bvp3} and \eqref{eq-bvp2}   consists of elliptic operators. This follows from the fact that the operators differ  only by metrics  defining  the  Laplace operators.  

Second, for the linear homotopy between the problems   \eqref{eq-bvp3} and \eqref{eq-bvp2},
the Calderon bundle $L_+$ is constant. Moreover, for the boundary value problems in this homotopy the corresponding family of vector bundle homomorphisms   \eqref{eq-shalo1} also does not change. Hence, the linear homotopy consists of elliptic problems.   
\end{proof}

\subsection{Reduction  to a $\psi$DO on the torus}

Let $M\times_g \mathbb{S}^1 $ be the   {\em torus of the diffeomorphism} $g$. Recal that the torus of a diffeomorphism is a closed smooth manifold obtained from the cylinder   $M\times [0,1]$ by identifying its bases with a  ``twist'' defined by the diffeomorphism  $g$:
\begin{equation}\label{eq-torus}
    M\times_g \mathbb{S}^1=M\times [0,1]\Bigl.\Bigr/
       \bigl\{(x,0)\sim (g^{-1}(x),1) \bigr\}.
\end{equation}
Consider the family of metrics $h(t)$ in \eqref{eq-bvp3} such that one has  
$$
h(t)=\left\{
\begin{array}{cc}
  h, & \text{if  } t<1/3,\\
  g^*h, & \text{if  } t>2/3, \\
\end{array}%
  \right.
$$
where $h$ is some fixed metric. This family is a smooth family of metrics in the fibers of the bundle   $M\times_g\mathbb{S}^1$.

The problem \eqref{eq-bvp3} defined  by this family of metrics is denoted by   $(\mathcal{D}',\mathcal{B}')$.
The operator $\mathcal{D}'$ defines an elliptic $\psi$DO on the torus $M\times_g \mathbb{S}^1$:
\begin{equation}\label{eq-op-torus1}
\mathcal{D}'_0=\displaystyle \frac\partial{\partial t}+[2P(\frac\pi 2\chi(t))-1_{2N}]\sqrt{\Delta_{M,h(t)}}:
C^\infty(M\times_g\mathbb{S}^1,\mathcal{E})\lra C^\infty(M\times_g\mathbb{S}^1,\mathcal{E}),
\end{equation}
where $\mathcal{E}\in\Vect(M\times_g\mathbb{S}^1)$ stands for the vector bundle with the total space  
\begin{equation}\label{eq-glue1}
\mathcal{E}=(M\times[0,1]\times \mathbb{C}^{2N})/\{(x,0,v_1,v_2)\sim (g^{-1}(x),1,v_2,v_1)\}.
\end{equation}
\begin{proposition}\label{lem-tor1}
One has $\ind(\mathcal{D}',\mathcal{B}')=\ind \mathcal{D}'_0$.
\end{proposition}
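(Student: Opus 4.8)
The plan is to show that the nonlocal boundary value problem $(\mathcal{D}',\mathcal{B}')$ on the cylinder is, in a precise sense, equivalent to the $\psi$DO $\mathcal{D}'_0$ on the mapping torus, so that their indices coincide. The key observation is that the boundary condition in \eqref{eq-bvp3}, for the specific metrics $h(t)$ chosen above, is exactly the \emph{gluing relation} that defines the mapping torus $M\times_g\mathbb{S}^1$ and the bundle $\mathcal{E}$ in \eqref{eq-glue1}. First I would make this precise: the boundary operator $\left(\begin{smallmatrix}0&1\\1&0\end{smallmatrix}\right)T U|_{t=0}-U|_{t=1}=F_2$ identifies the value $U|_{t=1}$ with the value $U|_{t=0}$ twisted by $g$ and by the flip of the two $\mathbb{C}^N$-factors; this is precisely the identification $(x,0,v_1,v_2)\sim(g^{-1}(x),1,v_2,v_1)$ used in \eqref{eq-glue1}. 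Hence a pair $(U,F_1)$ with $F_2=0$ on the cylinder corresponds to a genuine section and right-hand side of $\mathcal{D}'_0$ on the torus, and the operator $\mathcal{D}'_0$ was constructed so that $\left(\frac{\partial}{\partial t}+[2P(\frac\pi2\chi(t))-1_{2N}]\sqrt{\Delta_{M,h(t)}}\right)$ descends to it under this identification (the coefficients are compatible at $t=0$ and $t=1$ precisely because $h(0)=h$, $h(2/3)=g^*h$, and $\chi$ is constant near the endpoints, so the operator near $t=1$ on the cylinder matches the $g$-pullback of the operator near $t=0$).

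The second step is to pass from the operator identification to an index identification. I would argue that the kernel and cokernel of $(\mathcal{D}',\mathcal{B}')$, viewed as the kernel/cokernel of the homogeneous problem ($F_2=0$), are canonically isomorphic to the kernel and cokernel of $\mathcal{D}'_0$ acting on $C^\infty(M\times_g\mathbb{S}^1,\mathcal{E})$. Concretely: a solution $U$ of $\mathcal{D}'_0 U = F_1$ on the torus lifts to a solution on the cylinder automatically satisfying the boundary condition, and conversely a solution of $(\mathcal{D}',\mathcal{B}')$ with $F_2 = 0$ glues to a section on the torus because the boundary condition forces the matching of the values and (using that the operator has the product structure near the endpoints) the matching of all normal derivatives. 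The same correspondence applies to the formal adjoint, giving the cokernel identification. Both problems are Fredholm — $(\mathcal{D}',\mathcal{B}')$ by the previous lemma, and $\mathcal{D}'_0$ because it is an elliptic $\psi$DO on a closed manifold — so equality of kernels and cokernels yields $\ind(\mathcal{D}',\mathcal{B}')=\ind\mathcal{D}'_0$.

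An alternative, perhaps cleaner, route is to invoke the general principle (as in \cite{Sav9,Sav10}) that an elliptic boundary value problem of the form ``$\partial_t + A(t)$ on $M\times[0,1]$ with boundary condition identifying the two ends via an isomorphism $\Phi$'' has index equal to that of the operator $\partial_t + A(t)$ on the mapping torus glued by $\Phi$, whenever $A(t)$ has the appropriate compatibility with $\Phi$ at the endpoints; the present situation is a direct instance of this with $\Phi$ the flip composed with the shift $T$. In the write-up I would state this reduction explicitly and verify the compatibility hypothesis, which is exactly the statement that $\chi$ is locally constant near $0$ and $1$ and that $h(t)$ was arranged to be $h$ near $t=0$ and $g^*h$ near $t=1$.

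The main obstacle I expect is the bookkeeping at the endpoints: one must check carefully that the product structure of the operator near $t=0$ and near $t=1$ is genuinely compatible with the twisted gluing — i.e., that conjugating $\frac{\partial}{\partial t}+[2P(0)-1_{2N}]\sqrt{\Delta_{M,h}}$ by the flip-and-shift map $\Phi$ reproduces the operator $\frac{\partial}{\partial t}+[2P(0)-1_{2N}]\sqrt{\Delta_{M,g^*h}}$ appearing near $t=1$ (note $P(\frac{\pi}{2}\chi(t))=P(0)=P\oplus 0$ near $t=1$ since $\chi=0$ there, and $2(P\oplus 0)-1_{2N}$ is swapped by the flip to $1_{2N}-2(0\oplus P)$, which must then match after accounting for the orientation of $t\mapsto 1-t$ in the gluing). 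This is a finite check but it is where the specific choices of $\chi$, of $h(t)$, and of the flip $\left(\begin{smallmatrix}0&1\\1&0\end{smallmatrix}\right)$ in the boundary condition all have to conspire correctly; everything else is a routine consequence of ellipticity and Fredholm theory.
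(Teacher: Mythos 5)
Your proposal is correct and follows essentially the same route as the paper: observe that the homogeneous boundary condition is precisely the gluing relation defining the mapping torus and the bundle $\mathcal{E}$, use surjectivity of the boundary operator to reduce the index of $(\mathcal{D}',\mathcal{B}')$ to that of the homogeneous problem, and then identify the latter with the index of $\mathcal{D}'_0$ on $M\times_g\mathbb{S}^1$. The paper states this compactly and cites \cite{Sav9} for the ``standard'' gluing step; you fill in exactly the bookkeeping (locally constant $\chi$, matching metrics $h(t)$, compatibility of the flip-and-shift with the product structure near $t=0,1$) that makes that reduction work.
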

\begin{proof}
1. Since the operator of boundary condition in $(\mathcal{D}',\mathcal{B}')$ is surjective, we see that the  index of the boudnary value problem is equal to the index of the same boundary value problem but with homogeneous  boundary condition. This condition has the form  
$$
\left( \begin{array}{cc} 0&1 \\ 1& 0 \end{array} \right)TU|_{t=0}-U|_{t=1}=0,
$$
i.e., it coincides with the condition of continuity of the function   $U(t)$, considered as a section of the bundle   $\mathcal{E}$ over the torus $M\times_g\mathbb{S}^1$.

2. Since the boundary condition is actually the continuity condition,  the remaining part of the proof is standard and we omit it   (e.g., see \cite{Sav9}).
\end{proof}

\section{Comparison of topological indices}\label{sec-6}

\subsection{Computation of the index of $\psi$DO\! on the torus using Atiyah--Singer formula}

The operator   $\mathcal{D}'_0$ is an operator of the form
$$
\mathcal{D}'_0=\frac\partial{\partial t}+{A}(t),
$$
i.e., it is defined by a family $\mathcal{A}=\{A(t)\}$, $t\in [0,1]$ of operators on the sections  $M\times \{t\}$ of the torus.
Moreover, the family consists of elliptic operators and the corresponding family of symbols  
$$
 \sigma(A(t))(x,\xi)=[2\sigma(P(t))(x,\xi)-1]|\xi|_t
$$
(here $|\xi|_t$ is the norm of a covector with respect to a family of metrics   $h(t)$)
has a real spectrum at each point   $(x,\xi)$. It follows that the positive spectral subspace of the symbol   
$\sigma(A(t))(x,\xi)$ (by definition this subspace is generated by the eigenvectors with positive eigenvalues) is just the space $\im \sigma(P(t))(x,\xi)$. Hence, the family of positive spectral subspaces defines a smooth vector bundle over the torus  $S^*M\times_g\mathbb{S}^1$. Denote this vector bundle by
\begin{equation}\label{eq-bundle1}
  \sigma_+(\mathcal{A} )\in\Vect(S^*M\times_g \mathbb{S}^1).
\end{equation}
The following lemma (cf. Theorem~7.4 in \cite{APS3})  expresses the index of operator $ \mathcal{D}'_0$
in terms of the  bundle \eqref{eq-bundle1}.
\begin{lemma}\label{lem-torus1}
One has
\begin{equation}\label{eq-ind3}
 \ind \mathcal{D}'_0=\int_{S^*M\times_g\mathbb{S}^1} \ch[ \sigma_+(\mathcal{A})]\Td(T^*_{\mathbb{C}}M\times_g \mathbb{S}^1).
\end{equation}
\end{lemma}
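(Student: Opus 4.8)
The plan is to reduce the statement to the Atiyah--Singer index theorem for the elliptic $\psi$DO $\mathcal{D}'_0$ on the closed manifold $M\times_g\mathbb{S}^1$, which is a total space of a fiber bundle over $\mathbb{S}^1$ with fiber $M$. Since $\mathcal{D}'_0 = \partial/\partial t + A(t)$ is a first-order operator of Dirac type, its principal symbol on $T^*(M\times_g\mathbb{S}^1)\setminus 0$ is $i\tau + \sigma(A(t))(x,\xi)$, where $\tau$ is the cotangent variable along $\mathbb{S}^1$. The first step is to identify the $K$-theory class $[\sigma(\mathcal{D}'_0)]\in K^0(T^*(M\times_g\mathbb{S}^1))$ of this symbol. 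Because $\sigma(A(t))(x,\xi)$ is self-adjoint with real nonzero spectrum away from the zero section of $T^*M$, the standard clutching construction shows that this symbol class is obtained from the family $\mathcal{A}=\{A(t)\}$ by the "spectral flow"/index-bundle construction: one compactifies the fibers of $T^*M$ over $S^*M\times_g\mathbb{S}^1$, and the symbol class is the difference element built from the positive spectral subspace $\sigma_+(\mathcal{A})$, which is exactly the bundle \eqref{eq-bundle1}. This is the content (and I would cite it as such) of Theorem~7.4 of \cite{APS3}: the symbol class of $\partial_t + A(t)$ corresponds under the Thom isomorphism $K^0(S^*M\times_g\mathbb{S}^1)\to K^0(T^*(M\times_g\mathbb{S}^1))$ (or rather the relative version for the ball bundle) to $[\sigma_+(\mathcal{A})]$.

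Next I would write the Atiyah--Singer formula for $\ind\mathcal{D}'_0$:
$$
\ind\mathcal{D}'_0 = \int_{T^*(M\times_g\mathbb{S}^1)} \ch[\sigma(\mathcal{D}'_0)]\,\Td\bigl(T^*_{\mathbb{C}}(M\times_g\mathbb{S}^1)\bigr),
$$
and then push the integral down from $T^*(M\times_g\mathbb{S}^1)$ to $S^*M\times_g\mathbb{S}^1$ using the identification of the symbol class with $[\sigma_+(\mathcal{A})]$ from the previous paragraph. Since the Todd class is pulled back from the base, and $\ch$ commutes with the Thom isomorphism up to the Todd class of the bundle being compactified (here the vertical cotangent bundle of $M$, whose complexification contributes the missing factor), the integral over the fibers of $T^*M$ collapses and one is left with
$$
\ind\mathcal{D}'_0 = \int_{S^*M\times_g\mathbb{S}^1} \ch[\sigma_+(\mathcal{A})]\,\Td\bigl(T^*_{\mathbb{C}}M\times_g\mathbb{S}^1\bigr),
$$
where the notation $T^*_{\mathbb{C}}M\times_g\mathbb{S}^1$ stands for the complexified cotangent bundle along the fibers $M$ of the mapping torus, pulled back to $S^*M\times_g\mathbb{S}^1$. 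This is precisely \eqref{eq-ind3}.

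The main obstacle I expect is the bookkeeping in the second step: correctly matching the Todd factor that appears when one passes from the integral over the full cotangent bundle of the closed manifold $M\times_g\mathbb{S}^1$ to the integral over the cosphere bundle, and checking that what survives is exactly $\Td(T^*_{\mathbb{C}}M\times_g\mathbb{S}^1)$ — i.e., that the $\mathbb{S}^1$-direction of the cotangent bundle is accounted for correctly (its Todd contribution is trivial since the circle is parallelizable, but one must verify the orientation and the fact that the symbol is genuinely the suspension of the family along that direction). A secondary subtlety is that $\Td$ of a bundle over $M\times_g\mathbb{S}^1$ restricts naturally to the bundle appearing in \eqref{eq-ind3}; this is immediate since $S^*M\times_g\mathbb{S}^1 \to M\times_g\mathbb{S}^1$ pulls the class back. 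I would also remark that one must be slightly careful that $A(t)$ is not literally a first-order differential operator with the standard $\sqrt{\Delta}$-type symbol on the torus but, because the metric $h(t)$ is $t$-dependent, the symbol is still elliptic and self-adjoint; this was already arranged in the construction of \eqref{eq-op-torus1}, so no new analytic input is needed — only the topological identification of the symbol class, which is the heart of the lemma.
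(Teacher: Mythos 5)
Your overall strategy matches the paper's: apply Atiyah--Singer to the elliptic $\psi$DO $\mathcal{D}'_0$ on the closed manifold $M\times_g\mathbb{S}^1$, identify the symbol class with $[\sigma_+(\mathcal{A})]$ via the construction of Theorem~7.4 of \cite{APS3}, then push the integral down to $S^*M\times_g\mathbb{S}^1$. The citation is exactly the one the paper uses.

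However, the Todd-class bookkeeping in your final step --- which you yourself flag as the main obstacle --- rests on an incorrect picture and, carried out as you describe, would not produce \eqref{eq-ind3}. You assert that the factor $\Td(T^*_{\mathbb{C}}M\times_g\mathbb{S}^1)$ is generated by the compatibility of $\ch$ with a Thom-type map whose ``bundle being compactified'' is the vertical cotangent bundle of $M$. That is not what happens. The map to use is the direct image $i_!$ for the embedding $i:SM\times_g\mathbb{S}^1\hookrightarrow T(M\times_g\mathbb{S}^1)$ (with $SM$ the sphere bundle), and the normal bundle of $i$ is a \emph{trivial} rank-two bundle: the radial direction in $TM$ plus the $\mathbb{S}^1$-direction. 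So the Riemann--Roch--Atiyah--Hirzebruch formula gives $\ch\circ i_! = i_*\circ\ch$ with \emph{no} Todd correction at all. The factor $\Td(T^*_{\mathbb{C}}M\times_g\mathbb{S}^1)$ in the answer is simply what $\Td\bigl(T^*_{\mathbb{C}}(M\times_g\mathbb{S}^1)\bigr)$ from the Atiyah--Singer integrand restricts to, because the splitting $T(M\times_g\mathbb{S}^1)=(TM\times_g\mathbb{S}^1)\oplus\mathbf{1}$ shows these two Todd classes are already equal. In other words, the factor you are trying to conjure from the Thom isomorphism was never absent: it is present from the start, and the pushforward contributes nothing. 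This is more than a wording issue; if a Todd factor of the vertical cotangent bundle genuinely emerged from the pushforward, it would enter with the wrong power and you would not recover \eqref{eq-ind3}. You should replace your Thom-isomorphism heuristic with the explicit embedding $i$ and its trivial normal bundle, as the paper does.
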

\begin{proof}
To make the paper self-contained, we give the proof of this fact.  

1. The Atiyah--Singer index formula for   $\mathcal{D}'_0$ has the form
\begin{equation}\label{eq-atsi5}
  \ind \mathcal{D}'_0=\int_{T(M\times_g\mathbb{S}^1)}\ch{[}\sigma (\mathcal{D}'_0){]}\Td(T_{\mathbb{C}}(M\times_g\mathbb{S}^1)).
\end{equation}
Here and below in the proof we identify the tangent and contagent bundles using some metric.  

2. We have the  decomposition $T(M\times_g\mathbb{S}^1)=(TM\times_g\mathbb{S}^1)\oplus\mathbf{1}$ 
into directions perpendicular and parallel to the generator of the torus. Thus, we get
$$
\Td(T_{\mathbb{C}} (M\times_g \mathbb{S}^1))=\Td(T_{\mathbb{C}}M\times_g \mathbb{S}^1).
$$

3. Denote the composition of embeddings 
$SM\times_g\mathbb{S}^1\subset TM\times_g\mathbb{S}^1\subset T(M\times_g\mathbb{S}^1) $ by
$i$. The normal bundle of this embedding is, obviously, a direct sum of two one-dimensional trivial bundles. 
Moreover, one has  
\begin{equation}\label{eq-image1}
{[}\sigma (\mathcal{D}'_0){]}=i_![ \sigma_+(\mathcal{A} )]\in K^0(T(M\times_g\mathbb{S}^1)),
\end{equation}
where
$$
i_!:K^0(SM\times_g\mathbb{S}^1)\to K^0(T(M\times_g\mathbb{S}^1))
$$
is the direct image mapping corresponding to the embedding  $i$ (see \cite{Ati2}). Applying the Riemann--Roch--Atiyah--Hirzebruch formula \cite{AtHi1} to \eqref{eq-image1}, we obtain  
$$
\ch [\sigma (\mathcal{D}'_0)]=\ch i_![ \sigma_+(\mathcal{A} )]=i_* \ch[ \sigma_+(\mathcal{A} )]
$$
(the normal bundle of the embedding   $i$ is trivial, hence the Todd class is equal to one).

4. Substituting the formulas obtained in items  2 and 3 of the proof in Eq.~\eqref{eq-atsi5}, we obtain
$$
  \ind \mathcal{D}'_0=p_*\bigl(i_*(\ch\sigma_+(\mathcal{A}) \Td(T_{\mathbb{C}}M\times_g \mathbb{S}^1) \bigr)=
p_{0*}\bigl(\ch\sigma_+(\mathcal{A})) \Td(T_{\mathbb{C}}M\times_g \mathbb{S}^1) \bigr),
$$
where $p_*$ and $p_{0*}$ stand for Gysin maps in cohomology   (integration over the fundamental cycle), induced by the projections  $p:T(M\times_g \mathbb{S}^1)\to pt$ and $p_0:SM\times_g\mathbb{S}^1\to pt$.

The proof of the lemma is complete.
\end{proof}

On the cylinder consider the vector bundle   $\im \sigma(P)\in  \Vect(S^*M\times[0,1])$ and
identify the fibers of this bundle over the components of the boundary using the mapping  $\sigma(D_0)$ (see \eqref{eq-2-term1}) as follows
\begin{equation}\label{eq-glue2}
\mathcal{V}=
\left\{
 \begin{array}{c}
   (x,\xi,t,v)\\
   v\in\im \sigma(P)(x,\xi)
 \end{array}
\right\}/
\bigl\{(x,\xi,0,v)\sim ((\partial g)^{-1}(x,\xi),1 ,\sigma(D_0)((\partial g)^{-1}(x,\xi))v)\bigr\}.
\end{equation}
This space is a vector bundle $\mathcal{V}\in \Vect(S^*M\times_g \mathbb{S}^1)$ over the torus  $S^*M\times_g \mathbb{S}^1$.
\begin{lemma}\label{lem-5}
One has an isomorphism of vector bundles over   $S^*M\times_g\mathbb{S}^1$:
\begin{equation}\label{eq-isomm1}
  \sigma_+(\mathcal{A} )\simeq  \mathcal{V}.
\end{equation}
\end{lemma}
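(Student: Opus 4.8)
The plan is to exhibit an explicit vector bundle isomorphism between $\sigma_+(\mathcal{A})$ and $\mathcal{V}$ by tracking how both bundles are glued over the two ends of the cylinder $S^*M\times[0,1]$. Both bundles are built from the \emph{same} bundle on the cylinder, namely $\im\sigma(P)$ (this is the positive spectral subspace $\im\sigma(P(t))$ at each $t$, which by the discussion preceding the lemma equals $\im\sigma(P)$ for $t<1/3$ and — after the rotation homotopy $P(\pi/2\,\chi(t))$ — becomes the ``rotated'' projection for $t>2/3$); the only thing that distinguishes them is the identification of fibers over $t=0$ with fibers over $t=1$. So the content of the lemma is purely a matching of clutching data.

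First I would record precisely the clutching function of each bundle. For $\mathcal{V}$, by definition \eqref{eq-glue2} the gluing is $v\mapsto \sigma(D_0)((\partial g)^{-1}(x,\xi))v$ from $\im\sigma(P)((\partial g)^{-1}(x,\xi))$ at $t=1$ to $\im\sigma(P)(x,\xi)$ at $t=0$. For $\sigma_+(\mathcal{A})$, I would use that the operator $\mathcal D_0'$ on the torus was obtained from the boundary value problem \eqref{eq-bvp3} whose boundary condition is the continuity condition for sections of $\mathcal E$ twisted by the flip $\left(\begin{smallmatrix}0&1\\1&0\end{smallmatrix}\right)$, but — crucially — the positive spectral subspace at $t=0$ is $\im\sigma(P(0))=\im\sigma(P)$ whereas at $t=1$ the rotation homotopy $\varphi\mapsto P(\varphi)$, in the conjugated form \eqref{eq-unit1}, $P(\pi/2)=U_{\pi/2}(P\oplus 0)U_{\pi/2}^{-1}$, sends it to a subspace on which the operator $U_{\pi/2}$ built from $D_0$ acts. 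Restricting the clutching of $\mathcal E$ (the flip) to these spectral subspaces and composing with $U_{\pi/2}$ should reproduce exactly the map $\sigma(D_0)$ up to the pullback by $\partial g$. Concretely: the continuity identification $\mathcal E$-gluing, restricted to $\sigma_+(\mathcal A)$, factors through $\sigma(U_{\pi/2}^{-1})$, and $\sigma\!\left[\left(\begin{smallmatrix}D_0 g^*(P)&0\\0&0\end{smallmatrix}\right)g^*(U_\varphi^{-1})\right]$ at $\varphi=\pi/2$ is precisely the isomorphism $\im[(\partial g)^*\sigma(U_{\pi/2}P(0))]\to\im\sigma(P(0))$ appearing in the proof of Lemma~\ref{lemma-alpha1}. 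Matching these two descriptions gives the fiberwise isomorphism.

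The second step is to check that the bundle isomorphism $\im\sigma(P)\to\im\sigma(P(\pi/2\,\chi(t)))$ over the cylinder interpolating region $1/3\le t\le 2/3$ — which exists since $P(\varphi)=U_\varphi(P\oplus0)U_\varphi^{-1}$ is a conjugation by an invertible family — is compatible with the two boundary identifications, so that the fiberwise isomorphism from step one actually descends to a global bundle isomorphism on the quotient torus $S^*M\times_g\mathbb S^1$. This is a routine diagram chase: one writes both bundles as $\im\sigma(P)\times[0,1]$ modulo their respective gluings, uses $U_\varphi$ to identify the two cylinders, and verifies the gluings correspond.

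The main obstacle I expect is purely bookkeeping: keeping straight the three different ``rotations'' in play — the rotation homotopy $P(\varphi)$ and its conjugating family $U_\varphi$ from Section~\ref{sec-5}, the flip $\left(\begin{smallmatrix}0&1\\1&0\end{smallmatrix}\right)$ defining $\mathcal E$, and the pullback $\partial g$ twisting the two ends — and making sure the composition of clutching maps for $\sigma_+(\mathcal A)$ simplifies to exactly $\sigma(D_0)\circ(\partial g)^{-1}$ with no leftover sign or pullback. The identity $\nabla'|_{\varphi=0}=\nabla'|_{\varphi=2\pi}$ computed in the proof of Lemma~\ref{lem-useful1}, and the factorization \eqref{eq-unit1}, are the algebraic facts that make this cancellation work, so I would lean on them. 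Once the clutching functions are seen to coincide, the isomorphism \eqref{eq-isomm1} is immediate.
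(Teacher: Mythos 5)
Your proposal is correct and follows essentially the same route as the paper: identify the pull-backs to the cylinder via the conjugating family $u_{\pi/2\chi(t)}^{-1}=\sigma(U_{\pi/2\chi(t)})^{-1}$ (which is well defined by \eqref{eq-unit1}), then verify that this isomorphism intertwines the two boundary identifications \eqref{eq-glue1} and \eqref{eq-glue2}. The ``routine diagram chase'' you defer is exactly the commutativity of diagram \eqref{eq-diag2}, which the paper checks by a short explicit computation.
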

\begin{proof}
The pull-backs of the bundles  $ \sigma_+(\mathcal{A} )$ and $ \mathcal{V}$ to the cylinder $S^*M\times[0,1]$
are equal to 
$$
\im p\left(\frac \pi 2 \chi(t)\right)\quad \text{and} \quad \im p(0),\qquad \text{where }p(\varphi)=\sigma(P(\varphi)).
$$
The formula
\begin{equation}\label{eq-mu1}
u_{\frac\pi 2\chi(t)}^{-1}: \im p\left(\frac\pi 2\chi(t)\right)\lra \im p(0), \qquad \text{where }u_\varphi=\sigma(U_\varphi)
\end{equation}
defines a vector bundle  isomorphism  on the cylindeer. This mapping is well defined by Eq.~\eqref{eq-unit1}.

Let us verify that  the isomorphism \eqref{eq-mu1} of vector bundles over the cylinder extends by continuity to an isomorphism of bundles on the torus.  To prove this, it suffices to show that the diagram 
\begin{equation}\label{eq-diag2}
 \xymatrix{
   \im p(\pi/2) \ar[r]^{u^{-1}_{\pi/2}} \ar[d] &  \im p(0) \ar[d] \\
     \im p(0) \ar[r]^{u^{-1}_{0}}  & \im p(0)
 }
\end{equation}
is commutative. Here the horizontal mappings are just the restrictions of the isomorphism   $u_{\frac\pi 2\chi(t)}^{-1}$  at $t=0$
(upper row)  and at $t=1$ (lower row), while the vertical mappings are just identifications of vector bundles on the boundary of the cylinde. Recall that these identification mappings are defined in \eqref{eq-glue1} and \eqref{eq-glue2}
and give bundles on the torus.

Let us prove that   \eqref{eq-diag2} is a commutative diagram. We have
$$
u_0=1_{2N},\quad
u^{-1}_{\pi/2}=\sigma
   \left(
    \begin{array}{cc}
      1_N-P & g^{-1*}(D_0^{-1}P)\\
      -g^{-1}(PD_0) & 1_N-g^{-1*}(P)
    \end{array}
   \right),
$$
$$
p(0)=\sigma\left(
    \begin{array}{cc}
       P & 0\\
     0 & 0
    \end{array}
   \right),\; p(\pi/2)=\sigma\left(
    \begin{array}{cc}
       0 & 0\\
     0 & g^{-1*}(P)
    \end{array}
   \right).
$$
Let $(x,\xi,0,v)\in \im p(\pi/2)$. This means that $(x,\xi)\in S^*M$ and  $v\in \im p((\partial g)^{-1}(x,\xi))$.
Then, passing the diagram  \eqref{eq-diag2} through the lower left corner, we obtain the elements  
\begin{equation}\label{eq-el1}
 \xymatrix{
   (x,\xi, 0,v) \ar[d] &    \\
   ((\partial g)^{-1}(x,\xi),v,0) \ar[r]  & (((\partial g)^{-1}(x,\xi),v).
 }
\end{equation}
If we now pass the diagram through the right upper corner, we obtain the elements  
\begin{equation}\label{eq-el2}
 \xymatrix{
   (x,\xi, 0,v) \ar[r] &   (x,\xi, {}^*\sigma(D_0)^{-1}((\partial g)^{-1}(x,\xi))v)  \ar[d]\\
   & ((\partial g)^{-1}(x,\xi),v).
 }
\end{equation}
Since the elements obtained in the right lower corner in   \eqref{eq-el1} and \eqref{eq-el2} are equal, the diagram  \eqref{eq-diag2} is commutative. Hence, \eqref{eq-mu1} defines an isomorphism on the cylinder, and this isomorphism defines the desired isomorphism   \eqref{eq-isomm1} on the torus.
\end{proof}

\subsection{Comparison of the topological indices of the $\psi$DO on the torus and of the original operator}

Consider the equalities
\begin{multline}\label{eq-top2}
\ind D= \ind \mathcal{D}'_0 =\int_{S^*M\times_g\mathbb{S}^1} \ch[ \sigma_+(\mathcal{A})]\Td(T^*_{\mathbb{C}}M\times_g \mathbb{S}^1)=\\
\int_{S^*M\times_g\mathbb{S}^1} \ch \mathcal{V}\Td(T^*_{\mathbb{C}}M\times_g \mathbb{S}^1)=
\int_{S^*M\times_g\mathbb{S}^1} \ch  \mathcal{V} \ch \psi(T^*_{\mathbb{C}}M\times_g \mathbb{S}^1)=\\
\int_{S^*M\times_g\mathbb{S}^1} \ch(\mathcal{V}\otimes {\psi(T^*_{\mathbb{C}}M\times_g\mathbb{S}^1 )}).
\end{multline}
Here the first equality follows from the results of Sec.~\ref{sec-5}. The second follows from Lemma~\ref{lem-torus1}
and third follows from Lemma~\ref{lem-5}. The fourth equality is just the definition of operation   $\psi$.

To complete the proof of index theorem~\ref{th-index1}, it suffices to show that the topological index  $\ind_{top} D$ of the operator~\eqref{eq-2-term1} is equal to  the right-hand side in \eqref{eq-top2}. Note that, generally speaking, the element   $\psi(T^*_{\mathbb{C}}M\times_g\mathbb{S}^1 )\in K(M\times_g\mathbb{S}^1)\otimes\mathbb{Q}$ is a virtual bundle, i.e., a linear combination of vector bundles with rational coefficients   (see Sec.~\ref{sec-4}). 
To simplify the notation, we shall assume that   $\psi(T^*_{\mathbb{C}}M\times_g\mathbb{S}^1 )$ is a vector bundle, thus omitting the corresponding sum and coefficients.

Let $\mathcal{F}=\mathcal{V}\otimes   \psi(T^*_{\mathbb{C}}M\times_g\mathbb{S}^1 ) $  for brevity.
The sections of   $\mathcal{F}$ are just sections $F=\{F(x,\xi,t)\}$ of the bundle $\im p\otimes\psi(T^*_\mathbb{C}M)$ (here $p=\sigma(P)$)
on the cylinder $S^*M\times[0,1]$   such that
\begin{equation}\label{eq-cond6}
\sigma(D_0)(\partial g)^*F|_{t=0}=F|_{t=1}.
\end{equation}
This statement is verified by a direct computation using Eq.~\eqref{eq-glue2}.

Let us compute the Chern character of~$\mathcal{F}$ using formalism of connections.
Let  $\nabla_p=p\nabla_{\psi}  p$ be a connection in the bundle $(\im p)\otimes \psi$.
Here  $\psi$ stands for the bundle $\psi(T^*_\mathbb{C}M)$ for brevity,
while $\nabla_\psi$ is a connection in this bundle. Then the formula
\begin{equation}\label{eq-connection1}
 \nabla'_{tor}=dt\frac\partial{\partial t}+t\nabla_p+(1-t)[\sigma(D_0)(\partial g)^*]^{-1}\nabla_p[\sigma(D_0)(\partial g)^*]
\end{equation}
defines the connection in $\mathcal{F}$:
$$
 \nabla'_{tor}: \Lambda(S^*M\times_g\mathbb{S}^1,\mathcal{F})\lra  \Lambda(S^*M\times_g\mathbb{S}^1,\mathcal{F}).
$$
A direct computation using \eqref{eq-cond6} shows that this connection is well defined. The Chern character form of 
 $\mathcal{F}$ is defined by the classical formula
\begin{equation}\label{eq-chern-ch6}
\ch\mathcal{F}= \tr \left(p  \exp\left(-\frac{{\nabla_{tor}'}^2}{2\pi i}\right)\right).
\end{equation}

On the other hand, consider the original operator  \eqref{eq-2-term1}. By Lemma~\ref{lem-useful1} we have for this operator  (and $E=\psi(T^*_\mathbb{C}M)$)
\begin{equation}\label{eq-qq78}
\ch_{\psi(T^*_\mathbb{C}M)}[\sigma(D)]=    \tr \left(   \exp\left(-\frac{{\nabla^2_{tor}}}{2\pi i}\right)\right),
\end{equation}
where the noncommutative connection is equal to
\begin{equation}\label{eq-qq43}
\nabla_{tor}= dt\frac\partial{\partial t}+t\nabla +(1-t)(\sigma(D )^{-1})\nabla (\sigma(D)) 
\end{equation}
and is expressed in terms of some connection   $\nabla$  in the bundle  $\mathbb{C}^N\otimes \psi$ over $S^*M$.
Let us now define $\nabla$ as
\begin{equation}\label{eq-expr1}
\nabla=p\nabla_\psi p+(1-p)\nabla_\psi (1-p)
\end{equation}
and recall that
\begin{equation}\label{eq-expr2}
 \sigma(D)=\sigma(D_0)Tp+(1-p),\quad  \sigma(D)^{-1}=(\sigma(D_0)T)^{-1}p+(1-p)
\end{equation}
(see \eqref{eq-2-term1}). Substituting the expressions  \eqref{eq-expr1} and \eqref{eq-expr2} in Eq.~\eqref{eq-qq43}, we obtain
\begin{multline*}
\nabla_{tor}= dt\frac\partial{\partial t}+tp\nabla_\psi p+
(1-p)\nabla_\psi(1-p)+(1-t)(\sigma(D_0)T)^{-1}p\nabla_\psi p (\sigma(D_0)T)=\\
= p\nabla'_{tor} p+(1-p)(dt \frac\partial{\partial t} +\nabla_\psi)(1-p).
\end{multline*}
This implies that the curvature form is equal to 
$$
(\nabla_{tor})^2= p{\nabla'_{tor}}^2 p+[(1-p)\nabla_\psi(1-p)]^2.
$$
Hence the Chern character forms for the connections   $\nabla'_{tor}$ and $\nabla_{tor}$ differ by a form that does not contain   $dt$. Hence, the integrals of these Chern character forms are equal:
$$
\int_{S^*M\times[0,1]}   \tr \left(p   \exp\left(-\frac{(\nabla_{tor})^2}{2\pi i}\right)\right)=
\int_{S^*M\times[0,1]}   \tr \left( p  \exp\left(-\frac{(\nabla'_{tor})^2}{2\pi i}\right)\right),
$$
i.e., we obtain the desired equality
$$
\int_{S^*M\times_g\mathbb{S}^1} \ch(\mathcal{V}\otimes {\psi(T^*_{\mathbb{C}}M\times_g\mathbb{S}^1 )})=
\int_{S^*M\times[0,1]}  \ch_{\psi(T^*_\mathbb{C}M)}[\sigma(D)]=\ind_{top}D.
$$

The proof of the index theorem~\ref{th-index1} is now complete.

\section{Index formula in cyclic cohomology}

In this section we give an interpretation of the index formula   \eqref{aaa} in terms of cyclic cohomology (see~\cite{Con1,Tsy1}
and for cyclic cohomology of crossed products \cite{BrNi1,GeJo1,Nest1}).

\subsection{Equivariant Chern character}

\paragraph{1. Chern character in cyclic cohomology.}  Let   $E\in\Vect(X)$ be a vector bundle
over a smooth closed oriented manifold   $X$, $\dim X=n$. We fix a connection $\nabla_E$ in $E$ and define following \cite{Gor1} the multilinear functionals
\begin{multline}\label{eq-jlo1a}
\Char^k(E,\nabla_E;a_0,a_1,...,a_k)=\\
=
\frac{(-1)^{(n-k)/2}}{((n+k)/2)!}
\sum_{i_0+i_1+\ldots+i_k=(n-k)/2} \int_X \tr_E
\bigl[\left( a_0 \theta^{i_0}\nabla(a_1)\theta^{i_1}\nabla(a_2)\ldots \nabla(a_k)\theta^{i_k} \right)_0\bigr]
\end{multline}
$k=n,n-2,n-4,\ldots$
 (cf. Jaffe--Lesnievski--Osterwalder formula~\cite{JLO1}). Here for a noncommutative form $\omega$ by  $\omega_0$
we denote the coefficient at $T^0=1$, $\theta=\nabla_E^2$ is the curvature of the connection,
while the operator $\nabla:\Lambda(X,\End E)_\mathbb{Z}\to \Lambda(X,\End E)_\mathbb{Z}$ is defined as
$$
\nabla (\omega)=\nabla\omega-(-1)^{\deg\omega}\omega\nabla
$$
or more explicitly
$$
\nabla (\sum_k\omega_k T^k)= \sum_k\bigl[
 \nabla_E\omega_k-(-1)^{\deg\omega_k}\omega_k g^{k*}(\nabla_E)\bigr]T^k,
$$
where the expression $ \nabla_E\omega_k-(-1)^{\deg\omega_k}\omega_k g^{k*}(\nabla_E)$ is an operator of multiplication 
by a $1$-form. It follows from  \cite{Gor1} that the collection of functionals $\{\Char^k(E,\nabla_E)\}$
defines a cyclic cocycle over the algebra $C^\infty(X,\End E)_\mathbb{Z}$,  and the class of this cocycle 
in periodic cyclic cohomology   
$$
\Char(E)=[\{\Char^k(E,\nabla_E)\}]\in HP^*(C^\infty(X,\End E)_\mathbb{Z})
$$
does not depend on the choice of connection $\nabla_E$.

\begin{example}\label{exaa2}
Let $E$ be a flat bundle, i.e., $\theta=0$. Then the Chern character \eqref{eq-jlo1a} has only one nonzero component  $ \Char^n(E)$ that is equal to
\begin{equation}\label{eq-flat2}
\Char^n(E,\nabla_E;a_0,...,a_n)=\frac 1{n!}\int_X\tr_E(a_0\nabla(a_1)\nabla(a_2)\ldots\nabla (a_n))_0,\quad \dim X=n.
\end{equation}
\end{example}

\paragraph{2. Relation to the Chern character in Haefliger cohomology.}

\begin{proposition}\label{prop-eq1}
One has a commutative diagram
\begin{equation}\label{eq-diaga3}
\xymatrix{
 &  K_0(C^\infty(X,\End E)_\mathbb{Z})\ar[ld]_\ch \ar[dr]^{\;\;\;C_n\langle\cdot, \Char(E)\rangle}&  \\
  H^{ev}(X/\mathbb{Z})\ar[rr]_{\int_X}& & \mathbb{C},
}
\end{equation}
where  $C_n=(2\pi i)^{-n/2}$,  $\ch$ is the Chern character from Subsec.~\ref{sec-2-1}, $\int_X$ stands for the integral, and $\langle \cdot,\cdot\rangle$ is the pairing of the $K_0$-group with cyclic cohomologies. This pairing is defined by the formula
\begin{equation}\label{eq-pairing1}
\langle[p],[\varphi]\rangle=
\sum_k\frac{(-1)^k(2k)!}{k!}\varphi_{2k}   (p-1/2,p,\ldots, p),
\end{equation}
where $[p]\in K_0(C^\infty(X,\End E)_\mathbb{Z}),[\varphi]\in HP^{ev}(C^\infty(X,\End E)_\mathbb{Z}),$ and
the cyclic cocycle $\varphi$ is extended to matrix elements in the usual way
$$
\varphi_{l}(m_0\otimes a_0,m_1\otimes a_1,\ldots, m_l\otimes a_l )=
\tr(m_0m_1\ldots m_l)\varphi_l(a_0,a_1,\ldots, a_l).
$$
\end{proposition}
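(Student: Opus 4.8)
The plan is to prove the commutativity of diagram \eqref{eq-diaga3} by reducing to the universal case and then comparing the two explicit formulas for the Chern character directly on representative forms. First I would observe that both composites $K_0(C^\infty(X,\End E)_\mathbb{Z}) \to \mathbb{C}$ are additive and homotopy-invariant (the left composite by the Proposition on homotopy invariance of $\ch$ in Haefliger cohomology, the right composite because $\Char(E)$ is a well-defined periodic cyclic cohomology class and pairing with $K_0$ is homotopy-invariant), so it suffices to check equality on the class of a single matrix projection $p$, and by the last Remark of Subsection~\ref{sec-2-1} we may absorb matrix coefficients into $E$ and assume $p$ is a scalar projection in $C^\infty(X,\End E)_\mathbb{Z}$. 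Next I would fix a connection $\nabla_E$ in $E$ and use it on \emph{both} sides: on the Haefliger side take the noncommutative connection $\nabla = p\nabla_E p$ with curvature $\Omega = \nabla^2$, so that by Definition~\ref{chern1} $\ch p = \tau_E\!\left(p\exp(-\Omega/2\pi i)\right)$; on the cyclic side the cocycle $\{\Char^k(E,\nabla_E)\}$ is built from $\theta = \nabla_E^2$ and the operator $\nabla$ of \eqref{eq-jlo1a}.

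The heart of the argument is then a computation: evaluate the pairing \eqref{eq-pairing1} of $[p]$ with $\{\Char^k(E,\nabla_E)\}$ and recognize the result, after integration over $X$, as $(2\pi i)^{-n/2}$ times $\int_X \ch p$. I would proceed degree by degree. Plugging $a_0 = p - 1/2$ and $a_1 = \cdots = a_{2k} = p$ into the JLO-type formula \eqref{eq-jlo1a}, one uses the standard projection identities $p\,\nabla(p)\,p = 0$ and $\nabla(p) = (\nabla p)$ an honest $1$-form, together with $\nabla^2(p\cdot) = $ multiplication by the curvature, to collapse the sum over the $i_0,\dots,i_k$: the familiar manipulation (as in the Chern–Weil proof that the JLO cocycle pairs with a projection to give $\ch$) converts $a_0\,\theta^{i_0}\nabla(a_1)\theta^{i_1}\cdots\nabla(a_{2k})\theta^{i_{2k}}$ evaluated at $p$ into a multiple of $p\,\Omega^{\,(n)/2}$, where $\Omega = \nabla^2 = p\,d(p)\,d(p)\,p + \dots$ combines the "internal" curvature $p\theta p$ and the "projection" curvature $p\,dp\,dp\,p$. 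Taking the $T^0$-component and the endomorphism trace turns this into exactly the Haefliger form $\ch p$, while the combinatorial prefactors $\frac{(-1)^k(2k)!}{k!}$ from \eqref{eq-pairing1}, $\frac{(-1)^{(n-k)/2}}{((n+k)/2)!}$ from \eqref{eq-jlo1a}, and the multinomial count of the solutions to $i_0+\cdots+i_k=(n-k)/2$ must be shown to assemble into $(2\pi i)^{-n/2}$ times the coefficient $\frac{1}{m!}(-1/2\pi i)^m$ appearing in $\exp(-\Omega/2\pi i)$. This bookkeeping of constants is the one genuinely error-prone point.

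The main obstacle I anticipate is precisely this reconciliation of normalization constants together with keeping track of the noncommutative (crossed-product) bookkeeping: one must be careful that $\nabla$ acting on $\Lambda(X,\End E)_\mathbb{Z}$ twists by $g^{k*}(\nabla_E)$ on the $T^k$-component, so that when all $a_i$ equal the \emph{zero-degree} projection $p$ (which lies in $C^\infty(X,\End E)_\mathbb{Z}$, i.e. has nonzero components in possibly several $T^k$) the various twisted connection terms reassemble correctly into the single curvature $\Omega$; the graded-trace property of $\tau_E$ (the Lemma following \eqref{trace1}) is what guarantees that only the $T^0$-part survives and that the answer is the well-defined Haefliger form. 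A clean way to sidestep most of the constant-chasing is to invoke the fact, recorded in \cite{Gor1}, that $\Char(E)$ is \emph{already known} to represent the Chern character of $E$ paired against the fundamental class — so one can instead verify the diagram by checking it on the image of $K_0(C^\infty(X)) \to K_0(C^\infty(X,\End E)_\mathbb{Z})$ (where Proposition~\ref{prop-first-class1} reduces everything to the classical commutative statement $\int_X \ch(\operatorname{im}p)\ch E = \langle[\operatorname{im}p\otimes E], \mathrm{fund. class}\rangle$) and then on the "new" part generated by $T$, which is the flat case of Example~\ref{exaa2} where \eqref{eq-flat2} is the only surviving term; a Mayer–Vietoris/spectral-sequence argument for the crossed product shows these two families of classes generate, completing the proof.
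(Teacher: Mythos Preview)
Your two routes are both more involved than the paper's, and the second has a genuine gap.

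The paper's device is to embed $E$ as $\im q\subset X\times\mathbb{C}^N$ and enlarge the triangle \eqref{eq-diaga3} to a diagram whose outer triangle has the \emph{trivial} bundle $X\times\mathbb{C}^N$ at the top. The left and right inner triangles commute by a one-line compatibility: a projection $p$ over $C^\infty(X,\End E)_\mathbb{Z}$ is equally a projection $p'$ over $C^\infty(X,\Mat_N(\mathbb{C}))_\mathbb{Z}$ with $qp'q=p'$, and computing $\ch$ (resp.\ $\Char$) with the connection $q\nabla_E q+(1-q)d(1-q)$ on $\mathbb{C}^N$ literally reproduces the computation with $\nabla_E$ on $E$. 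For the outer perimeter one now takes the \emph{flat} connection $d$ on $\mathbb{C}^N$: Example~\ref{exaa1} collapses the Haefliger side to the single top-degree term $\frac{1}{(n/2)!}\bigl(-\tfrac{1}{2\pi i}\bigr)^{n/2}\int_X\tr[p(dpdp)^{n/2}]_0$, while Example~\ref{exaa2} collapses the cyclic side to $\frac{(-1)^{n/2}}{(n/2)!}\int_X\tr[p(dpdp)^{n/2}]_0$; the ratio is exactly $C_n$. Three commuting triangles force the fourth. No JLO combinatorics, no spectral sequence, no generation argument.

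Your direct-computation route could in principle be pushed through, but it is precisely the calculation the paper sidesteps; in the non-flat case, reducing the JLO sum with insertions of $\theta=\nabla_E^2$ to $\tau_E(p\exp(-\Omega/2\pi i))$ is a real identity (essentially the Getzler--Quillen comparison), not a consequence of $p\,\nabla(p)\,p=0$ alone. Your alternative route is where the gap lies: you propose to check the diagram on two subsets of $K_0(C^\infty(X,\End E)_\mathbb{Z})$ and then appeal to a generation statement, but (i) you conflate ``the flat case of Example~\ref{exaa2}'' --- a hypothesis on the \emph{bundle} $E$ --- with a class of $K_0$-elements for a fixed $E$, and (ii) the Pimsner--Voiculescu sequence is an exact sequence, not a statement that the image of $K_0(C^\infty(X))$ together with some explicit ``$T$-part'' spans $K_0$ of the crossed product. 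The paper's embedding-in-trivial trick achieves exactly the reduction to the flat case that you are after, but for free: it changes the ambient bundle rather than trying to decompose the $K_0$-class.
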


\begin{proof}
1. Let us make an additional construction. Namely, we embed the triangle~\eqref{eq-diaga3} in the diagram\vspace{-5mm}
\begin{equation}\label{eq-diaga4}
\xymatrix{
  & & K_0(C^\infty(X,\Mat_N(\mathbb{C}))_\mathbb{Z})\ar[lldd]_\ch \ar[ddrr]^{C_n\langle\cdot, \Char(X\times{\mathbb{C}^N})\rangle}&  &\\
 &  & K_0(C^\infty(X,\End E)_\mathbb{Z})\ar[u]\ar[lld]^\ch \ar[drr]_{C_n\langle\cdot, \Char(E)\rangle}& & \\
  H^{ev}(X/\mathbb{Z})\ar[rrrr]_{\int_X}& & & &\mathbb{C}
}
\end{equation}
Here the vertical mapping
$ 
 K_0(C^\infty(X,\End E)_\mathbb{Z})\to K_0(C^\infty(X,\Mat_N(\mathbb{C}))_\mathbb{Z})
$ 
is induced by the embedding $E\subset X\times \mathbb{C}^N$ in the trivial bundle.

2. We claim that the left and the right triangles of the diagram~\eqref{eq-diaga4} are commutative.  
Indeed, let us prove the commutativity of the left triangle   (the commutativity of the right triangle is obtained similarly). Suppose that $E=\im q\subset X\times \mathbb{C}^N$, where $q$ is a projection in the trivial bundle. Then we have an isomorphism
$$
C^\infty(X,\End E)_\mathbb{Z}=q \bigl[C^\infty(X,\Mat_N(\mathbb{C}))_\mathbb{Z} \bigr]q.
$$
In particular, to  a projection $p$  over $C^\infty(X,\End E)_\mathbb{Z}$ we assign a projection $p'$ over   the algebra $C^\infty(X,\Mat_N(\mathbb{C}))_\mathbb{Z}$. Thus, we have
$$
\ch[p]=[\ch(p,\nabla_E)]=[\ch(p',q\nabla_E q+(1-q)d(1-q))]=\ch[p'].
$$
Here the first and last equalities follow from the definition of the Chern character, and the equality in the middle follows
from the equality of the corresponding differential forms.  

3. The perimeter of the diagram \eqref{eq-diaga4} is also a commutative triangle. Indeed, in the trivial bundle, let us choose the flat connection defined by the exterior differential   $d$. Then by Example~\ref{exaa1} for a projection  $p$ over $C^\infty(X,\Mat_N(\mathbb{C}))_\mathbb{Z}$ we obtain
$$
\int_X \ch[p]=\frac{1}{(n/2)!}\left(-\frac{1}{2\pi i}\right)^{n/2}\int_X \tr \left[p(dpdp)^{n/2}\right]_0.
$$
On the other hand, it follows from the formula obtained in the Example~\ref{exaa2} that 
$$
\langle[p],\Char(X\times{\mathbb{C}^N})\rangle=\frac{(-1)^{n/2}}{(n/2)!}\int_X \tr \left[p(dpdp)^{n/2}\right]_0.
$$
We see that the last two expressions differ only by the factor $C_n=(2\pi i)^{-n/2}$. This proves that the perimeter of the diagram  \eqref{eq-diaga4} is commutative.

4. In the diagram \eqref{eq-diaga4}, we proved the commutativity of all the triangles, except for the lower triangle. Hence, the lower triangle is commutative.   

The proof of the proposition is complete.
\end{proof}

\paragraph{3. Equivariant Chern character \cite{Gor1}.}
{\em The equivariant Chern character} of a $g$-bundle  $E$ on $X$
$$
\Ch(E)\in HP^*(C^\infty(X)\rtimes\mathbb{Z})
$$
is defined as
$$
\Ch(E):=\beta^*\Char(E) ,
$$
where  $\beta^*:HP^*(C^\infty(X,\End E)\rtimes\mathbb{Z}) \lra HP^*(C^\infty(X)\rtimes\mathbb{Z}) $
is the mapping induced by the homomorphism of algebras 
$$
\beta:C^\infty(X)\rtimes\mathbb{Z}\lra C^\infty(X,\End E)\rtimes\mathbb{Z};\quad
\sum_k\omega_k T^k\mapsto \sum_k(\omega_k\otimes 1_E)\widetilde{T}^k.
$$
There is an analogue of the commutative diagram~\eqref{eq-diaga3} for the equivariant Chern character. Namely, one has
\begin{equation}\label{eq-diaga5}
\xymatrix{
 &  K_0(C^\infty(X)\rtimes\mathbb{Z})\ar[ld]_{\ch_E} \ar[dr]^{\;\;\;C_n\langle\cdot, \Ch(E)\rangle}&  \\
  H^{ev}(X/\mathbb{Z})\ar[rr]_{\int_X}& & \mathbb{C}.
}
\end{equation}

\subsection{Index formula in cyclic cohomology}

Given a $g$-bundle $E\in\Vect(X)$ over a smooth closed oriented manifold $X$, we define the equivariant Todd class 
$$
\td(E)\in HP^*(C^\infty(X)\rtimes\mathbb{Z})
$$
as $\td(E):=\Ch(\psi(E)),$ where $\psi$ is the operation in rational $K$-theory defined in Sec.~\ref{sec-4}.

\paragraph{1. Index formula.}

\begin{theorem}
For an elliptic operator $D$ one has an index formula
\begin{equation}\label{eq-ind-forla1}
 \ind D=(2\pi i)^{-n}\langle [\sigma(D)],\td (\pi^*T^*_\mathbb{C}M)\rangle,\qquad \dim M=n,
\end{equation}
where $\pi: S^*M\times\mathbb{S}^1\lra M$ is the projection and the brackets $\langle,\rangle$
stand for the pairing of $K$-theory with cyclic cohomology (see \eqref{eq-pairing1}).
\end{theorem}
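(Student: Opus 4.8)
The plan is to obtain \eqref{eq-ind-forla1} by rewriting the Haefliger-cohomology index formula of Theorem~\ref{th-index1} through the equivariant analogue~\eqref{eq-diaga5} of Proposition~\ref{prop-eq1}. Throughout, put $X=S^*M\times\mathbb{S}^1$, equipped with the $\mathbb{Z}$-action generated by $\partial g$ on the first factor and the identity on $\mathbb{S}^1$. Since the unit cosphere bundle of any smooth manifold is orientable (being a two-sided hypersurface in the symplectic manifold $T^*M$), $X$ is a closed oriented manifold of dimension $2n$, so the normalization constant in~\eqref{eq-diaga5} is $C_{\dim X}=(2\pi i)^{-\dim X/2}=(2\pi i)^{-n}$. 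Moreover $T^*_\mathbb{C}M$ is a $g$-bundle via the complexified differential of $g$, hence $\pi^*T^*_\mathbb{C}M$ is a $g$-bundle over $X$ for the chosen action, and $[\sigma(D)]\in K_0(C^\infty(X)\rtimes\mathbb{Z})$ by~\eqref{eq-diff-2}; these are exactly the data to which~\eqref{eq-diaga5} applies.

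First I would record the naturality of the operation $\psi$ of Section~\ref{sec-4}: being a universal power series in the Grothendieck operations $\gamma_j$ — equivalently, built from $\oplus$, $\otimes$ and exterior powers, all of which commute with pullback — it satisfies $\psi(\pi^*T^*_\mathbb{C}M)=\pi^*\psi(T^*_\mathbb{C}M)$ as virtual $g$-bundles with rational coefficients. By the definition of the equivariant Todd class this gives
\[
\td(\pi^*T^*_\mathbb{C}M)=\Ch\bigl(\psi(\pi^*T^*_\mathbb{C}M)\bigr)=\Ch\bigl(\pi^*\psi(T^*_\mathbb{C}M)\bigr)\in HP^{ev}(C^\infty(X)\rtimes\mathbb{Z}).
\]
As usual all statements are extended from honest bundles to virtual bundles with rational coefficients by linearity, so it is harmless to treat $\psi(T^*_\mathbb{C}M)$ as a genuine $g$-bundle.

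The deduction is then immediate. Theorem~\ref{th-index1} gives
\[
\ind D=\int_{X}\ch_{\psi(T^*_\mathbb{C}M)}[\sigma(D)]=\int_X \ch_{\pi^*\psi(T^*_\mathbb{C}M)}[\sigma(D)],
\]
and feeding the $g$-bundle $E=\pi^*\psi(T^*_\mathbb{C}M)$ and the class $[\sigma(D)]$ into the commutative diagram~\eqref{eq-diaga5}, whose two paths agree up to the factor $C_{\dim X}$, yields
\[
\int_X \ch_{\pi^*\psi(T^*_\mathbb{C}M)}[\sigma(D)]=(2\pi i)^{-n}\bigl\langle[\sigma(D)],\Ch(\pi^*\psi(T^*_\mathbb{C}M))\bigr\rangle=(2\pi i)^{-n}\bigl\langle[\sigma(D)],\td(\pi^*T^*_\mathbb{C}M)\bigr\rangle,
\]
which is precisely~\eqref{eq-ind-forla1}.

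The content of the argument lies in the two inputs rather than in the assembly. The commutativity of~\eqref{eq-diaga5} is obtained from Proposition~\ref{prop-eq1} by factoring through $\beta_*$: one has $\ch_E=\ch\circ\beta_*$ and $\Ch(E)=\beta^*\Char(E)$, and the adjunction $\langle\beta_*x,\varphi\rangle=\langle x,\beta^*\varphi\rangle$ for the pairing~\eqref{eq-pairing1}, combined with diagram~\eqref{eq-diaga3}, gives the claim. I expect the only genuine (though modest) obstacle to be bookkeeping: checking that the $\mathbb{Z}$-equivariant data — the action on $S^*M\times\mathbb{S}^1$ and the $g$-structure on $\pi^*T^*_\mathbb{C}M$ — are literally the ones implicit in Theorem~\ref{th-index1} and in~\eqref{eq-diaga5}, and that the dimension count producing $C_{\dim X}=(2\pi i)^{-n}$ is carried out correctly, so that no spurious powers of $2\pi i$ survive.
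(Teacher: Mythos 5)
Your proposal is correct and follows essentially the same route as the paper: apply Theorem~\ref{th-index1} to get the Haefliger-cohomology index formula, then convert the integral of the twisted Chern character into a cyclic pairing via the equivariant diagram~\eqref{eq-diaga5}, and finally invoke the definition $\td(\pi^*T^*_\mathbb{C}M)=\Ch(\psi(\pi^*T^*_\mathbb{C}M))$. Your remark on the naturality of $\psi$ under pullback (reconciling $\psi(\pi^*T^*_\mathbb{C}M)$ with $\pi^*\psi(T^*_\mathbb{C}M)$), your explicit dimension count $\dim(S^*M\times\mathbb{S}^1)=2n$ giving $C_{2n}=(2\pi i)^{-n}$, and your derivation of the commutativity of~\eqref{eq-diaga5} from~\eqref{eq-diaga3} via $\beta_*/\beta^*$ adjunction are all correct clarifications of steps the paper states without elaboration.
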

\begin{proof}
The index formula \eqref{aaa} gives us
\begin{equation}\label{bbb9}
\ind D=\int_{S^*M\times\mathbb{S}^1} \ch_{\psi(\pi^*T^*_\mathbb{C}M)}[\sigma(D)].
\end{equation}
Using the commutative diagram \eqref{eq-diaga5} and the definition of the equivariant Todd class, we can rewrite 
the right-hand side in~\eqref{bbb9} in the desired form
\begin{multline*}
\int_{S^*M\times\mathbb{S}^1} \ch_{\psi(\pi^*T^*_\mathbb{C}M)}[\sigma(D)]=
(2\pi i)^{-n}\langle [\sigma(D)],\Ch(\psi(\pi^*T^*_\mathbb{C}M))\rangle=\\
= (2\pi i)^{-n}\langle [\sigma(D)],\td( \pi^*T^*_\mathbb{C}M )\rangle.
\end{multline*}
\end{proof}

\paragraph{2.  A special case.}   Suppose that the Todd class $\Td(T^*_\mathbb{C}(M\times_g\mathbb{S}^1))$ of the complexification of the cotangent bundle of the twisted torus is equal to one. Then it turns out that in this case one can replace the class   $\td (\pi^*T^*_\mathbb{C}M)$ in \eqref{eq-ind-forla1} simply by the transverse fundamental class of the manifold   $S^*M\times \mathbb{S}^1$  in the sense of \cite{Con8}. This  enables one to write the index formula in the form
\begin{equation}\label{eq-nice-index1}
\ind D=\frac{(n-1)!}{(2\pi i)^n(2n-1)!}\int_{S^*M} \tr (\sigma^{-1}d\sigma)^{2n-1}_0,\qquad \sigma=\sigma(D).
\end{equation}
The index formula \eqref{eq-nice-index1} is a corollary of the following more general statement.
\begin{proposition}\label{prop-crutial1}
Suppose that in the homology class Poincar\'e dual to the Todd class   $ \Td(T^*_\mathbb{C}M\times_g\mathbb{S}^1)$
there exists a representative\footnote{Here the homology group is treated in terms of closed de Rham currents  (see \cite{Rha1}).} of the form
$$
\omega \longmapsto z\left( \int_{\mathbb{S}^1} \omega\right),\quad
\omega\in \Lambda(S^*M\times_g\mathbb{S}^1),
$$
where $z$ is a closed $g$-invariant current on $S^*M$. Then the equivariant Todd class in the index formula   \eqref{eq-ind-forla1} can be replaced by the collection of cyclic cocycles with the components  
$$
(a_0,...,a_{2k})\longmapsto\frac{(2\pi i)^{n-k}}{(2k)!}z\left(a_0da_1 da_2...da_{2k}\right),\qquad k=0,1,...,n.
$$
\end{proposition}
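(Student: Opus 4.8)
The plan is to deduce the statement from the geometric index formula already obtained in the course of proving Theorem~\ref{th-index1}, combined with classical Poincar\'e duality on the mapping torus. Recall from \eqref{eq-top2} that
\[
\ind D=\int_{S^*M\times_g\mathbb{S}^1}\ch\mathcal{V}\cdot\Td\bigl(T^*_\mathbb{C}M\times_g\mathbb{S}^1\bigr),
\]
an ordinary integral over the closed oriented manifold $N:=S^*M\times_g\mathbb{S}^1$; here $\mathcal{V}$ is the bundle \eqref{eq-glue2} clutched by $\sigma(D_0)(\partial g)^*$, and under the identifications of Section~\ref{sec-6} the form $\ch\mathcal{V}$ corresponds to $\ch[\sigma(D)]$ while this integral corresponds to the Haefliger integral $\int_{S^*M\times\mathbb{S}^1}$. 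I would then apply Poincar\'e duality on $N$: for a closed form $\alpha$ one has $\int_N\alpha\cdot\Td=\langle\operatorname{PD}(\Td),\alpha\rangle$, summing over components of $\Td$ in complementary degrees, and by hypothesis $\operatorname{PD}(\Td)$ is represented by the current $\omega\mapsto z(\int_{\mathbb{S}^1}\omega)$. This yields
\[
\ind D=z\Bigl(\int_{\mathbb{S}^1}\ch\mathcal{V}\Bigr).
\]
This is the step that uses the hypothesis, and it is the conceptual heart of the argument.

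Next I would compute $\int_{\mathbb{S}^1}\ch\mathcal{V}$. Writing the Chern character by the noncommutative connection formula of Lemma~\ref{lem-useful1} and integrating out the circle coordinate $\varphi$ produces the standard transgression (odd Chern--Simons) form $\ch_1[\sigma(D)]\in H^{odd}(S^*M/\mathbb{Z})$, a $g$-invariant closed Haefliger form polynomial in $\sigma(D)^{-1}d\,\sigma(D)$; this is the usual fact that fibre integration over $\mathbb{S}^1$ of the even Chern character of the clutching construction \eqref{eq-diff-2} gives the odd Chern character of $\sigma(D)$. Since $z$ is closed and $g$-invariant, it annihilates exact forms and forms in $(1-g^*)\Lambda$, so $z(\ch_1[\sigma(D)])$ is well defined and depends only on the class $[\sigma(D)]$; hence $\ind D=z(\ch_1[\sigma(D)])$.

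It remains to match this with the cyclic cocycle $\mu=\{\mu_{2k}\}$ of the statement, $\mu_{2k}(a_0,\dots,a_{2k})=\frac{(2\pi i)^{n-k}}{(2k)!}z(a_0da_1\cdots da_{2k})$, the current $z$ being understood on $S^*M\times\mathbb{S}^1$ as $\omega\mapsto z(\int_{\mathbb{S}^1}\omega)$. First one checks that $\mu$ is a periodic cyclic cocycle over $C^\infty(S^*M\times\mathbb{S}^1)\rtimes\mathbb{Z}$: the $b$- and $B$-identities reduce, component by component, to closedness and $g$-invariance of $z$, as for the transverse fundamental class in \cite{Con8}. Then, inserting $p=\mathcal{P}$ into the pairing formula \eqref{eq-pairing1}, $\langle[\mathcal{P}],[\mu]\rangle=\sum_k\frac{(-1)^k(2k)!}{k!}\mu_{2k}(\mathcal{P}-1/2,\mathcal{P},\dots,\mathcal{P})$, one carries out the usual suspension computation: only the part of $\mathcal{P}$ with $\varphi\in[0,\pi/2]$ contributes, the $\varphi$-integration collapses the even pairing on $S^*M\times\mathbb{S}^1$ to the odd pairing on $S^*M$, and after reconciling the constants $\frac{(2\pi i)^{n-k}}{(2k)!}$, $\frac{(-1)^k(2k)!}{k!}$, and the combinatorial factors coming from the expansion of $\exp(-\Omega/2\pi i)$ in the Chern character, the alternating sum telescopes to $(2\pi i)^n z(\ch_1[\sigma(D)])$. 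Combining with the previous paragraph gives $(2\pi i)^{-n}\langle[\sigma(D)],[\mu]\rangle=\ind D$; since by \eqref{eq-ind-forla1} the same identity holds with $\td(\pi^*T^*_\mathbb{C}M)$ in place of $[\mu]$, the equivariant Todd class may be replaced by $[\mu]$ in the index formula.

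The main obstacle I expect is twofold. Conceptually, one must make the Poincar\'e-duality step rigorous in the transverse/Haefliger setting — that the Haefliger integral over $S^*M\times\mathbb{S}^1$ of $\alpha\cup\Td$ is literally the evaluation of $\operatorname{PD}(\Td)$ on $\alpha$, via the identification with the closed manifold $N$ already used in Section~\ref{sec-6}, including matching the powers of $2\pi i$ that distinguish the Haefliger integral from $\int_N$. Computationally, the constant bookkeeping in the cyclic pairing is error-prone, reconciling the factorials and powers of $2\pi i$ in \eqref{eq-pairing1}, in the definition of $\mu_{2k}$, and in the Chern-character conventions; the special case \eqref{eq-nice-index1} — where $\Td=1$, $z$ is integration over $S^*M$, and everything collapses to $\frac{(n-1)!}{(2\pi i)^n(2n-1)!}\int_{S^*M}\tr(\sigma^{-1}d\sigma)^{2n-1}_0$ — is a useful consistency check.
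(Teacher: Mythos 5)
Your proposal follows the paper's own (very terse) proof: replace the final step of \eqref{eq-top2} by the Poincar\'e duality evaluation $\int_{S^*M\times_g\mathbb{S}^1}\ch\mathcal{V}\cdot\Td = z\bigl(\int_{\mathbb{S}^1}\ch\mathcal{V}\bigr)$, then run the same comparison argument as in the proof of \eqref{eq-ind-forla1} to rewrite the right-hand side as a cyclic pairing. You flesh out the suspension and constant-matching that the paper leaves implicit, but the logical route is the same.
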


\begin{proof}
The proof of this proposition is similar to the proof of index formula   \eqref{eq-ind-forla1}. The main difference is that instead of equality \eqref{eq-top2}  one uses equalities of the form
\begin{equation}\label{eq-top2z}
\ind D= \ind \mathcal{D}'_0 = 
\int_{S^*M\times_g\mathbb{S}^1} (\ch \mathcal{V})\Td(T^*_{\mathbb{C}}M\times_g \mathbb{S}^1)=
z\left(\int_{\mathbb{S}^1} \ch  \mathcal{V}\right).
\end{equation}
\end{proof}

\begin{example}Suppose that $ \Td(T^*_\mathbb{C}M\times_g\mathbb{S}^1)=1$. Then we can take the current $z$ of degree $2n-1$ defined by integration over $S^*M$.   Then Proposition~\ref{prop-crutial1} gives the  index formula \eqref{eq-nice-index1}  (after a standard integration over $\mathbb{S}^1$). This remark applies, for instance, in the case  of elliptic operators for a  diffeomorphism of the sphere in the connected component of the identity (see \cite{CoMo2,Mos2}).  
\end{example}

Proposition~\ref{prop-crutial1} can be applied if $g$ is an isometry. In this case we define  the current   $z$  by the formula
$$
z(\omega)=\int_{S^*M} \omega \wedge \Td(T^*_\mathbb{C}M),\quad \omega\in \Lambda(S^*M),
$$
where $ \Td(T^*_\mathbb{C}M)$ is the differential form representing the Todd class using a $g$-invariant metric. In this case, we obtain the index formula first proved in~\cite{NaSaSt17}.

\section{Examples. Remarks}

\subsection{Example. Operators on the torus $\mathbb{T}^3$}

The index formula \eqref{eq-ind-forla1}, despite its compact and elegant form, often leads to serious computational difficulties, when one really needs to compute the index of a specific operator.  To solve this problem, it is sometimes useful to simplify the formula so that the simplified formula could really be used to compute the desired number.

In this subsection, we exibit a procedure of this form for a relatively simple operator related to
the Dirac operator. The answer we obtain is quite suitable to obtain explicit numerical expression for the index of the problem.  

1.~Consider the torus $\mathbb{T}^3=\mathbb{R}^3/2\pi\mathbb{Z}^3$
with coordinates  $x=(x_1,x_2,x_3)$ and the diffeomorphism\footnote{This diffeomorphism 
is the ``Arnold's cat map'' \cite{Arn1} acting along  $x_1,x_2$ and the identity map along $x_3$.}
$$
g: \mathbb{T}^3\lra \mathbb{T}^3, \quad
g\left( \begin{matrix}   x_1\\    x_2\\x_3 \end{matrix} \right)=\left(
 \begin{matrix}
   2 & 1 & 0\\
   1 & 1 & 0\\
   0 & 0 & 1
 \end{matrix}
\right)
\left( \begin{matrix}   x_1\\    x_2\\x_3 \end{matrix} \right).
$$
Consider the Dirac operator on $\mathbb{T}^3$ 
\begin{equation}\label{eq-dir1}
 \sum_{j=1}^3 c_j \left(-i\frac\partial{\partial x_j}\right):
C^\infty(\mathbb{T}^3,\mathbb{C}^2)\lra C^\infty(\mathbb{T}^3,\mathbb{C}^2),
\end{equation}
where 
$$
 c_1=\left(
 \begin{matrix}
   0 & 1  \\
  1 & 0 \\
  \end{matrix}
\right),\;
c_2=\left(
 \begin{matrix}
   0 & -i  \\
  i & 0 \\
  \end{matrix}
\right),\;
c_3=\left(
 \begin{matrix}
   1 & 0  \\
  0 & -1 \\
  \end{matrix}
\right)
$$ 
stand for Pauli matrices. The Dirac operator is elliptic and self-adjoint in the space $L^2$. 
Therefore, it has a discrete real spectrum, while the eigenvalues have finite multiplicities. Consider the positive
spectral projection for the Dirac operator, i.e., the orthogonal projection on the subspace generated by eigenfunctions of the Dirac operator s with positive eigenvalues. This projection is denoted by $P$ and is  a $\psi$DO (see \cite{See5})  of order zero.

\begin{theorem}\label{th-55}
Let $f\in C^\infty(\mathbb{T}^3,\Mat_N(\mathbb{C}))$ be a function  ranging in invertible matrices. Then the operator
\begin{multline}\label{eq-nashop4}
D=(f\otimes 1) (1\otimes P)T(1\otimes P)+1\otimes (1-P):\\
H^s(\mathbb{T}^3,\mathbb{C}^{N}\otimes \mathbb{C}^2)\lra
H^s(\mathbb{T}^3,\mathbb{C}^{N}\otimes \mathbb{C}^2),
\end{multline}
where  $T=g^*$ is the shift operator for $g$, is Fredholm for all   $s$ and its index is equal to 
\begin{equation}\label{eq-index7}
\ind D= \frac 1 {(2\pi i)^2 3!}\int_{\mathbb{T}^3} \tr (f^{-1}df)^{3}.
\end{equation}
\end{theorem}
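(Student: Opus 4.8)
The plan is to specialize the general index formula \eqref{eq-ind-forla1} (equivalently \eqref{aaa}) to this concrete situation and show that all the ``exotic'' ingredients collapse. The operator \eqref{eq-nashop4} is already a special two-term operator of the form \eqref{eq-kvese2z}: it is $D=(f\otimes 1)(1\otimes P)T(1\otimes P)+(1-1\otimes P)$, so $D_0$ is multiplication by $f\otimes 1$ and the relevant projection is $1\otimes P$, where $P$ is the positive spectral projection of the flat Dirac operator on $\mathbb{T}^3$. Ellipticity (Fredholmness for all $s$) is immediate from Theorem~\ref{th-finit1}, since $\sigma(D_0)=f\otimes 1$ is invertible and hence the symbol isomorphism \eqref{ell-sim1} holds.

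The key geometric point is that the twisted torus $\mathbb{T}^3\times_g\mathbb{S}^1$ is parallelizable: the diffeomorphism $g$ is linear (an element of $SL(3,\mathbb{Z})$), so $T^*\mathbb{T}^3$ is the trivial $g$-equivariant bundle and therefore $T^*_{\mathbb C}(\mathbb{T}^3\times_g\mathbb{S}^1)$ is trivial, giving $\Td(T^*_{\mathbb C}(\mathbb{T}^3\times_g\mathbb{S}^1))=1$. Hence we are precisely in the situation of the Example following Proposition~\ref{prop-crutial1}: we may take $z$ to be the current of integration over $S^*\mathbb{T}^3$, and the index formula reduces to \eqref{eq-nice-index1}, namely $\ind D=\dfrac{(n-1)!}{(2\pi i)^n(2n-1)!}\int_{S^*\mathbb{T}^3}\tr(\sigma^{-1}d\sigma)^{2n-1}_0$ with $n=3$ and $\sigma=\sigma(D)$. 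So the task is to evaluate this integral for $\sigma(D)=\sigma(f\otimes 1)(1\otimes P)T(1\otimes P)+(1-1\otimes P)$, i.e.\ to integrate the Haefliger Chern character of $[\sigma(D)]$ over $S^*\mathbb{T}^3\times\mathbb{S}^1$.

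The computational heart is therefore to show that
$$
\frac{2!}{(2\pi i)^3 5!}\int_{S^*\mathbb{T}^3}\tr\bigl(\sigma^{-1}d\sigma\bigr)^5_0
=\frac{1}{(2\pi i)^2\,3!}\int_{\mathbb{T}^3}\tr(f^{-1}df)^3 .
$$
I would do this via the twisted-Chern-character description of Lemma~\ref{lem-useful1}: with $E$ trivial, $\ch[\sigma(D)]=\tr\exp(-\nabla_{tor}^2/2\pi i)$ for the connection $\nabla_{tor}=dt\,\partial_t+t\nabla+(1-t)\sigma^{-1}\nabla\sigma$ on $S^*\mathbb{T}^3\times\mathbb{S}^1$. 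Using $\sigma(D)=\sigma(D_0)(1\otimes P)T(1\otimes P)+(1-1\otimes P)$ and choosing $\nabla$ compatible with the splitting $\im(1\otimes P)\oplus\ker(1\otimes P)$, the curvature decomposes (exactly as in the computation around \eqref{eq-connection1}--\eqref{eq-expr2} in Sec.~\ref{sec-6}); only the $\im(1\otimes P)$-block contributes, and the problem is pushed down to $\mathbb{T}^3$. There the symbol $\sigma(P)(x,\xi)$ of the positive spectral projection of the Dirac operator depends only on $\xi/|\xi|\in S^2$ and is the standard Bott/Hopf projection, so its Chern character integrates to $1$ over the $S^2$ fibre of $S^*\mathbb{T}^3\to\mathbb{T}^3$; what survives is the class of $f$, producing $\int_{\mathbb{T}^3}\ch[f]=\frac{1}{(2\pi i)^3 3!}\int_{\mathbb{T}^3}\tr(f^{-1}df)^3$ up to the stated normalization.

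The main obstacle I expect is bookkeeping of the normalization constants and the fibre integration over $S^2$: one must check carefully that the $c_j$-Clifford data makes $\sigma(P)$ exactly the degree-one generator on the fibres (so that $\int_{S^2\text{-fibre}}\ch(\im\sigma(P))=1$ rather than some other integer), and that the constants $(n-1)!/(2\pi i)^n(2n-1)!$ with $n=3$, together with the Fubini reduction $\int_{S^*\mathbb{T}^3}=\int_{\mathbb{T}^3}\int_{S^2}$ and the $t$-integration over $\mathbb{S}^1$, combine to give precisely $1/(2\pi i)^2 3!$. Once the projection-splitting reduces the five-form on $S^*\mathbb{T}^3$ to the Maurer--Cartan three-form $\tr(f^{-1}df)^3$ on $\mathbb{T}^3$ times a normalized $S^2$-integral, the identity \eqref{eq-index7} follows. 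Everything else (ellipticity, the vanishing of the Todd class on the parallelizable twisted torus, applicability of Proposition~\ref{prop-crutial1}) is routine given the results of Sections~\ref{sec-4}--\ref{sec-6}.
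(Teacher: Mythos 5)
Your outline correctly identifies the geometric simplifications ($\Td(T^*_\mathbb{C}(\mathbb{T}^3\times_g\mathbb{S}^1))=1$ by parallelizability, reduction to formula \eqref{eq-nice-index1} via Proposition~\ref{prop-crutial1}, Bott integral over the $\mathbb{S}^2$-fibre giving $1$), but there are two genuine gaps.

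First, ellipticity is not ``immediate'' from the invertibility of $f\otimes 1$. For the two-term operator $D=P_0 D_0TP_0+(1-P_0)$ with $P_0=1\otimes P$ and $D_0=f\otimes 1$, condition \eqref{ell-sim1} is that the restricted-and-projected map
$$
\sigma(P_0)\,\sigma(D_0)\big|_{(\partial g)^*\im\sigma(P_0)}: (\partial g)^*\im\sigma(P_0)\lra\im\sigma(P_0)
$$
be an isomorphism. Since $\sigma(D_0)=f\otimes 1$ acts as the identity on the spinor factor $\mathbb{C}^2$, and the two line bundles $\im p(g^{-1}\xi/|g^{-1}\xi|)$ and $\im p(\xi)$ are genuinely different (the Dirac projection $P$ is not $g$-invariant), the nontrivial content is that the composition $p(\xi):\im p(g^{-1}\xi/|g^{-1}\xi|)\to\im p(\xi)$ is injective. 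The paper proves this via the Clifford identity \eqref{eq-clifford}: if the map had a kernel, the vectors $\xi$ and $g^{-1}\xi$ would have to be antipodal, impossible because the eigenvalues of the cat map are real and positive. The invertibility of $f$ is orthogonal to this issue, so you cannot invoke Theorem~\ref{th-finit1} without the Clifford argument.

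Second, your push-down to $\mathbb{T}^3$ is incomplete. After reducing to the bundle $\mathcal{V}$ over $S^*\mathbb{T}^3\times_g\mathbb{S}^1$, the gluing isomorphism is $(f\otimes 1)\circ(\partial g)^*$ composed with the spinor projection, and it intertwines the base $\mathbb{T}^3$ and fibre $\mathbb{S}^2$ directions. Writing $\ch\mathcal{V}=\ch\mathcal{V}_1\cdot\ch\mathcal{V}_2$ with $\mathcal{V}_1$ the $\im p$-piece (glued by $\partial g$) and $\mathcal{V}_2$ the $\mathbb{C}^N$-piece (glued by $f$), fibre integration over $\mathbb{S}^2$ does give $\pi_!(c_1(\mathcal{V}_1))=1$, but one must also show that the higher components of $\ch\mathcal{V}_1$ contribute nothing; ``only $\ch[f]$ survives'' is asserted, not proved. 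The paper avoids this by factoring $\sigma(D)=\sigma_0\sigma_1$ with $\sigma_0=pfp+1-p$ (a pure $\psi$DO symbol, handled by the classical Atiyah--Singer formula) and $\sigma_1=pTp+1-p$, and then proving $\ind_{top}\sigma_1=0$ by a degree count (Proposition~\ref{prop-6a}): $\sigma_1$ is independent of $x$, so $\sigma_1^{-1}d\sigma_1$ contains no $dx_j$'s, and a $5$-form built from at most two independent $\xi$-differentials on the $2$-sphere fibre must vanish. You should either reproduce this factorization-plus-degree argument, or actually carry out the fibre integration with all the higher terms. Note also that the paper gives a second, completely elementary proof of the theorem by showing (via Fourier series) that $D_1=PTP+1-P$ is \emph{invertible}, so that $\ind D=\ind D_0$ is given directly by Atiyah--Singer; this route bypasses the entire index-formula machinery and is worth knowing.
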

Let us write the operator \eqref{eq-nashop4} simply as
$$
 D=fPTP+1-P
$$
omitting the tensor products.

2.~Let us prove that $D$ is elliptic. To this end, we first compute the symbol of $P$. The symbol of the Dirac operator \eqref{eq-dir1} is equal to
$$
c(\xi)=c_1\xi_1+c_2\xi_2+c_3\xi_3\in \Mat_2(\mathbb{C}),
$$
where $\xi=(\xi_1,\xi_2,\xi_3)$ stand for variables dual  to  $x$.
In what follows, it is useful to write the following Clifford identity (e.g., see  \cite{LaMi1}):
\begin{equation}\label{eq-clifford}
 c(\xi)c(\xi')v+c(\xi')c(\xi)v=2(\xi,\xi')v, \quad \xi,\xi'\in\mathbb{R}^3,v\in\mathbb{C}^2,
\end{equation}
where in the right-hand side of (\ref{eq-clifford}) we have inner product of vectors.  
In particular,  Eq.~\eqref{eq-clifford} implies that the matrix
$$
p(\xi)=\frac{ {1}+c(\xi)}2,\quad |\xi|=1
$$
is a rank one projection. Moreover, this projection is just the positive spectral projection of the symbol  $c(\xi)$ of the Dirac operator. We extend this function  to a degree zero homogeneous function in  $\xi$. Then the results of the paper 
\cite{See5} give the equality
$$
\sigma(P)=p.
$$
We are now ready to prove that $D$ is elliptic. Consider the mapping
\begin{equation}\label{eq-uu1}
u(\xi)=p(\xi):\im (\partial g)^*p(\xi)\lra \im p(\xi).
\end{equation}
We claim that the mapping (\ref{eq-uu1}) is invertible (cf. \cite{Sav9}).  Indeed, let us consider the converse, i.e., suppose that for some   $\xi$ we have a nonzero vector
\begin{equation}
\label{aaa6}
v\in\im (\partial g)^*p(\xi)=\im p(g^{-1}\xi)  \quad \text{such that } 
p(\xi)v=0.
\end{equation}
In terms of Clifford multiplication, condition (\ref{aaa6})  is written  as: 
$$
c(\xi)v=-v,\quad  c\left(\frac{g^{-1}\xi}{|g^{-1}\xi|}\right) v=v.
$$
Substituting these two formulas in \eqref{eq-clifford}, we get
\begin{equation}
\label{aaaaa}
-2v=2\left(\xi,\frac{g^{-1}\xi}{|g^{-1}\xi|}\right )v\quad \text{or  } 
\cos(\xi,g^{-1}\xi)=-1,
\end{equation}
i.e., the vectors   $\xi$ and $g^{-1}\xi$ form an angle equal to $\pi$. But this can not be true, since the matrix  $g^{-1}$ has no negative eigenvalues.  
This contradiction shows that the mapping   \eqref{eq-uu1} is an isomorphism.

Denote by $U^{-1}$ a $\psi$DO  on $\mathbb{T}^3$ such that the restriction of its symbol to the subspace   $\im p(\xi)$
coincides with $u(\xi)^{-1}$. We claim that the operator  
$$
B= T^{-1}f^{-1}U^{-1}P+1-P
$$
is an almost inverse of   $D$ (i.e., inverse up to operators of negative order). Indeed, for example, let us compute the composition of symbols:
\begin{multline}\label{ugly1}
\sigma(D)\sigma(B)=(fpTp+1-p)(T^{-1}u^{-1}pf^{-1}+1-p)=\\
=fpTpT^{-1}u^{-1}pf^{-1}+(1-p)+(1-p)T^{-1}u^{-1}pf^{-1}=\\
=fp[(\partial g)^*p] u^{-1}p f^{-1}+(1-p)+(1-p)T^{-1} T p T^{-1}u^{-1}p f^{-1}=\\
=fpf^{-1}+(1-p)+(1-p)pT^{-1}u^{-1}p f^{-1}=p+1-p=1.
\end{multline}
The equality  $\sigma(B)\sigma(D)=1$ is obtained similarly.

Thus,  $D$ is elliptic and  Fredholm by Theorem~\ref{th-finit1}.

3.~By the index theorem~\ref{th-index1} the analytic index of $D$  is equal to the topological index of its symbol. 
Let us compute the topological index of $\sigma(D)$. The symbol $\sigma(D)$ has the factorization  
\begin{equation}\label{eq-deco6}
\sigma(D)=\sigma_0 \sigma_1,\quad \sigma_0=pfp+1-p,\quad \sigma_1=pTp+(1-p),
\end{equation}
into two elliptic symbols, where  $\sigma_0$ does not contain shift $T$.
Let us compute the topological indices of these symbols. The index of    $\sigma_0$ coincides with the 
Atiyah--Singer topological index and is equal to   (see \cite{BaDo1,BaDo2})
$$
\ind_{top} \sigma_0=\int_{S^*\mathbb{T}^3}\ch [f]\ch(\im p) \Td(T^*_\mathbb{C}\mathbb{T}^3)   ,
$$
where $[f]\in K^1(\mathbb{T}^3)$ is the class of   $f$ in the odd $K$-group. Further, we get
$$
\int_{S^*\mathbb{T}^3}\!\ch [f]\ch(\im p) \Td(T^*_\mathbb{C}\mathbb{T}^3)\!=\!
\int_{S^*\mathbb{T}^3}\!\ch [f]\ch(\im p) =\!\int_{\mathbb{T}^3}\!\ch [f]\int_{\mathbb{S}^2} \ch (\im p)  =C\!\int_{\mathbb{T}^3}\! \tr (f^{-1}df)^{3},
$$
where $C=((2\pi i)^2 3!)^{-1}$. Here we first noted that the tangent bundle of the torus is trivial and replaced the Todd class by one. Then, we used the decomposition   $S^*\mathbb{T}^3=\mathbb{T}^3\times \mathbb{S}^2$ with the coordinates $x,\xi$ on the factors. Moreover, since $f$ depends only on $x$, and $p$ depends only on $\xi$, the integral over $\mathbb{T}^3\times \mathbb{S}^2$ is just the product of an integral over $\mathbb{T}^3$ and an integral over $\mathbb{S}^2$.  In the next to the last equality, the Chern character in the first factor is represented by a differential form and we noted that   $\im p$ is the Bott bundle on  $\mathbb{S}^2$ (see \cite{Ati2} and \cite{Gil1}) and one has 
$$
\int_{\mathbb{S}^2} \ch \im p=1.
$$

So, we obtain
\begin{equation}\label{eq-d00}
\ind_{top} \sigma_0=\frac{1}{(2\pi i)^2 3!}\int_{\mathbb{T}^3} \tr (f^{-1}df)^{3}.
\end{equation}
\begin{proposition}\label{prop-6a}
One has $\ind_{top} \sigma_1=0$.
\end{proposition}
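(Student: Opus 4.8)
The idea is to read $\sigma_1=pTp+(1-p)$ as the symbol of a special two-term operator of the form \eqref{eq-2-term1}, apply the computation of Section~\ref{sec-6}, and then use that $T^*\mathbb{T}^3$ is parallelizable. Indeed, $\sigma_1$ is the symbol of the elliptic operator $PTP+(1-P)$ on $\mathbb{T}^3$ (elliptic because $u(\xi)=p(\xi):\im(\partial g)^*p(\xi)\to\im p(\xi)$ is an isomorphism, as already shown above), which is of the form \eqref{eq-2-term1} with $\sigma(D_0)=p$ restricting to the clutching isomorphism $u$ on $\im(\partial g)^*p$. Hence the chain of equalities \eqref{eq-top2} gives
$$
\ind_{top}\sigma_1=\int_{S^*\mathbb{T}^3\times_g\mathbb{S}^1}\ch\bigl(\mathcal{V}\otimes\psi(T^*_\mathbb{C}\mathbb{T}^3\times_g\mathbb{S}^1)\bigr),
$$
where $\mathcal{V}\in\Vect(S^*\mathbb{T}^3\times_g\mathbb{S}^1)$ is the bundle \eqref{eq-glue2} attached to $\sigma_1$ (with clutching isomorphism $u$).

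First I would dispose of the twisting bundle. Since $T^*\mathbb{T}^3$ is the trivial bundle and $g$ is linear, its codifferential $\partial g=(dg^t)^{-1}$ is a constant matrix; therefore the bundle $T^*_\mathbb{C}\mathbb{T}^3\times_g\mathbb{S}^1$ over the twisted torus (glued by this constant matrix) is flat, so its rational Chern classes vanish and $\ch\psi(T^*_\mathbb{C}\mathbb{T}^3\times_g\mathbb{S}^1)=\Td(T^*_\mathbb{C}\mathbb{T}^3\times_g\mathbb{S}^1)=1$ in $H^*(\,\cdot\,;\mathbb{Q})$ (and the same after pull-back to $S^*\mathbb{T}^3\times_g\mathbb{S}^1$). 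Consequently
$$
\ind_{top}\sigma_1=\int_{S^*\mathbb{T}^3\times_g\mathbb{S}^1}\ch\mathcal{V}.
$$

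The key observation is that $\mathcal{V}$ is pulled back from a $3$-manifold. Writing $S^*\mathbb{T}^3=\mathbb{T}^3\times\mathbb{S}^2$ with coordinates $(x,\xi)$, the codifferential acts by $\partial g(x,\xi)=(gx,h\xi)$ for a fixed diffeomorphism $h$ of $\mathbb{S}^2$ (the normalized linear action of $(dg^t)^{-1}$); forgetting the $\mathbb{T}^3$-coordinate then produces a well-defined smooth map $q\colon S^*\mathbb{T}^3\times_g\mathbb{S}^1\to\mathbb{S}^2\times_h\mathbb{S}^1$ onto the closed $3$-manifold $\mathbb{S}^2\times_h\mathbb{S}^1$. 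Now $\im p$ is the pull-back to $\mathbb{T}^3\times\mathbb{S}^2$ of the Bott bundle on $\mathbb{S}^2$, and the clutching isomorphism $u$ in \eqref{eq-glue2} depends only on $\xi$; hence $\mathcal{V}=q^*\widetilde{\mathcal{V}}$ for a (rank-one) bundle $\widetilde{\mathcal{V}}$ over $\mathbb{S}^2\times_h\mathbb{S}^1$. Then $\ch\mathcal{V}=q^*\ch\widetilde{\mathcal{V}}$, and since $H^k(\mathbb{S}^2\times_h\mathbb{S}^1;\mathbb{Q})=0$ for $k\ge 4$ we get $\ch\widetilde{\mathcal{V}}=\rk\widetilde{\mathcal{V}}+c_1(\widetilde{\mathcal{V}})$ with $c_1(\widetilde{\mathcal{V}})^2=0$. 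Thus $\ch\mathcal{V}$ has no component of degree $6=\dim(S^*\mathbb{T}^3\times_g\mathbb{S}^1)$, so the integral above vanishes and $\ind_{top}\sigma_1=0$.

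The one slightly delicate point, and the step I expect to require real care, is the pull-back claim: one must unwind the definitions of $p$, $\partial g$ and $u$ and check that the gluing data in \eqref{eq-glue2} for the operator $PTP+(1-P)$ genuinely depends only on the cotangent direction $\xi\in\mathbb{S}^2$, not on the base point $x\in\mathbb{T}^3$, so that $\mathcal{V}$ descends along $q$. Everything else is routine, using only the machinery of Sections~\ref{sec-3}--\ref{sec-6}. (Alternatively, one could avoid the mapping torus and argue directly from Lemma~\ref{lem-useful1}: the Haefliger Chern character $\ch_{\psi(T^*_\mathbb{C}\mathbb{T}^3)}[\sigma_1]$ is, rationally, pulled back along the projection that forgets the $\mathbb{T}^3$-factor, hence has no top-degree component and integrates to zero over $S^*\mathbb{T}^3\times\mathbb{S}^1$.)
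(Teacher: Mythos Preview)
Your proof is correct, and it reaches the conclusion by a genuinely different route from the paper's. Both arguments ultimately rest on the same observation: because $g$ is linear, all the symbolic data ($p$, the clutching map $u$, and the action of $\partial g$ on the fiber direction) depend only on $\xi\in\mathbb{S}^2$ and not on $x\in\mathbb{T}^3$. The paper exploits this directly at the level of differential forms on $S^*\mathbb{T}^3$: since $\Td(T^*_\mathbb{C}\mathbb{T}^3\times_g\mathbb{S}^1)=1$, the simplified formula~\eqref{eq-nice-index1} applies, giving
\[
\ind_{top}\sigma_1=\frac{2!}{(2\pi i)^3\,5!}\int_{S^*\mathbb{T}^3}\tr(\sigma_1^{-1}d\sigma_1)^5_0,
\]
and then notes that $\sigma_1$, $\sigma_1^{-1}$, and hence the $5$-form $(\sigma_1^{-1}d\sigma_1)^5$ contain no differentials $dx_j$; a $5$-form on $\mathbb{T}^3\times\mathbb{S}^2$ built only from $d\xi$'s vanishes identically since $\dim\mathbb{S}^2=2$. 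Your argument instead passes to the mapping torus, repackages the same $x$-independence as a factorization $\mathcal{V}=q^*\widetilde{\mathcal{V}}$ through the $3$-manifold $\mathbb{S}^2\times_h\mathbb{S}^1$, and kills the top Chern component by a pure dimension count. The paper's version is shorter and stays entirely on $S^*M$; yours is more geometric and would adapt more readily to, say, any projection pulled back from the cosphere fiber. Your parenthetical alternative via Lemma~\ref{lem-useful1} is essentially the paper's argument rephrased in pullback language.
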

\begin{proof}
By Eq.~\eqref{eq-nice-index1}, the topological index of    $\sigma_1$ is equal to
\begin{equation}\label{eq-nicce1}
\ind_{top} \sigma_1=\frac{2!}{(2\pi i)^3 5!}\int_{S^*M} \tr (\sigma^{-1}_1d\sigma_1)^5_0.
\end{equation}
Let us compute this integral. The symbol $ \sigma_1\in C^\infty(S^*\mathbb{T}^3,\Mat_2(\mathbb{C}))\rtimes \mathbb{Z}$  is constant in $x$. Thus, its differential $d \sigma_1\in\Lambda^1(S^*\mathbb{T}^3,\Mat_2(\mathbb{C}))\rtimes \mathbb{Z}$ does not contain differentials $dx_j$. The product $ \sigma_1^{-1}d \sigma_1$ also has no differentials $dx_j$. This uses the fact that $g$ is a linear diffeomorphism. The same reasoning shows that the form
$$
(\sigma^{-1}_1d\sigma_1)^5
$$
(see \eqref{eq-nicce1}) also does not contain differentials $dx_j$. On the other hand, the degree of this form is equal to five. Therefore, this form is identically zero. Thus,   \eqref{eq-nicce1} implies that the topological index of  $\sigma_1$ is zero.
\end{proof}

The formula \eqref{eq-index7} now follows from Eq.~\eqref{eq-d00} and Proposition~\ref{prop-6a}. This completes the proof of Theorem~\ref{th-55}.

4.~Let us give a direct proof of Theorem~\ref{th-55}.

Namely, let us first prove that   $D$ is elliptic. One has an equality  (cf.  \eqref{eq-deco6}) modulo compact operators
$$
D=D_0D_1,\quad D_0=fP+(1-P),\quad D_1=PTP+(1-P).
$$
Here  $D_0$ is an elliptic  $\psi DO$  and its index (computed by the Atiyah--Singer formula)
is equal to the right-hand side in~\eqref{eq-d00} (see the above computation).
Thus, to prove Theorem~\ref{th-55}, it suffices to show that $D_1$ is a Fredholm operator of index zero.  
\begin{proposition}
The operator $D_1=PTP+(1-P)$ is invertible.
\end{proposition}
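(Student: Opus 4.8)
The plan is to pass to Fourier series on $\mathbb{T}^{3}$, reduce the invertibility of $D_1$ to the vanishing of $\ker D_1$ and $\ker D_1^{*}$, and deduce both from the fibrewise non-degeneracy already established in \eqref{aaa6}--\eqref{aaaaa} together with the positivity of the spectrum of the cat map.

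First I would set up the Fourier picture. The $\mathbb{C}^{N}$-factor is a passive multiplicity, $D_1=1_{\mathbb{C}^{N}}\otimes D_1^{(1)}$, so it suffices to treat $N=1$; then $D_1$ is the order-zero elliptic operator found above (its symbol $\sigma_1=pTp+1-p$ is invertible by \eqref{aaa6}--\eqref{aaaaa}), hence Fredholm on every $H^{s}$ by Theorem~\ref{th-finit1}. Since the Dirac operator \eqref{eq-dir1} has constant coefficients it is diagonalised by Fourier series: on the mode $e^{ik\cdot x}$, $k\in\mathbb{Z}^{3}\setminus\{0\}$, it acts by the matrix $c(k)$, whose strictly positive eigenspace is $\im p(k)$ with $p(k)=\frac12(1+c(k)/|k|)$, while the mode $k=0$ lies in its kernel. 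Hence $P$ is \emph{exactly} the Fourier multiplier with block $p(k)$ for $k\ne0$ and $0$ for $k=0$. Moreover $T=g^{*}$ is unitary on $L^{2}$ (the integer matrix $A$ defining $g$ has determinant $1$) and permutes Fourier modes, $\widehat{Tu}(k)=\hat u(Bk)$ with $B=(A^{T})^{-1}$, which is precisely the codifferential $\partial g$ occurring in \eqref{aaa6}; likewise $D_1^{*}=PT^{-1}P+(1-P)$ with $\widehat{T^{-1}u}(k)=\hat u(B^{-1}k)$.

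Next I would compute the kernels. Applying $P$ and $1-P$ to $D_1u=0$ gives at once $\ker D_1=\{u\in\im P:\ Tu\in\ker P\}$ and, symmetrically, $\ker D_1^{*}=\{u\in\im P:\ T^{-1}u\in\ker P\}$. Translated to Fourier coefficients, $u\in\ker D_1$ means $\hat u(0)=0$ and, for every $k\ne0$, $\hat u(Bk)\in\im p(Bk)\cap\ker p(k)$ --- but this is nothing other than the subspace shown to be trivial in \eqref{aaa6}--\eqref{aaaaa}, applied at $\xi=k$: by the Clifford identity \eqref{eq-clifford}, exactly as there, a nonzero vector in it would force the angle between $Bk$ and $k$ to be $\pi$, i.e.\ $k$ to be a real eigenvector of $B$ with negative eigenvalue. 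Since both $B=(A^{T})^{-1}$ and $B^{-1}=A^{T}$ have eigenvalues $\frac{3\pm\sqrt5}{2}$ and $1$, all positive, no such $k$ exists. Hence $\hat u(Bk)=0$ for all $k\ne0$, and since $B$ permutes $\mathbb{Z}^{3}$ this forces $\hat u\equiv0$, so $\ker D_1=0$; the identical argument with $B^{-1}$ in place of $B$ gives $\ker D_1^{*}=0$.

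Finally I would conclude: $D_1$ is Fredholm on $L^{2}$ with $\ker D_1=0$, and $\overline{\operatorname{Ran}D_1}=(\ker D_1^{*})^{\perp}=L^{2}$ with closed range, so $D_1$ is bijective on $L^{2}$; by elliptic regularity (Theorem~\ref{th-finit1}) it is then invertible on $H^{s}$ for every $s$. In particular $\ind D_1=0$, which is what the direct proof of Theorem~\ref{th-55} requires. The only genuinely substantive step is the third one --- the observation that on the flat torus the global injectivity question reduces, mode by mode, to the fibrewise statement \eqref{aaa6}--\eqref{aaaaa}, whose sole obstruction is a negative eigenvalue of the linearisation $A$; since the cat map has strictly positive spectrum this obstruction is absent, and everything else is Fourier bookkeeping.
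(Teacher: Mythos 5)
Your proof is correct and rests on the same core observation as the paper's own argument: in the Fourier picture on $\mathbb{T}^{3}$ the operator $P$ acts exactly mode-by-mode through $p(k)$, the shift $T$ is an exact unitary permutation of modes, and the whole question reduces to the fibrewise bijectivity $\im p(g^{-1}k)\cap\ker p(k)=\{0\}$, which follows from the Clifford identity and positivity of the spectrum of $g$. The paper instead uses this fibrewise bijectivity to write down an exact two-sided inverse $B=T^{-1}U^{-1}P+(1-P)$ directly, which avoids your appeal to the Fredholm property of Theorem~\ref{th-finit1}; both conclusions are minor variations on the same mode-by-mode reduction.
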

\begin{proof}
Let us represent functions on the torus as Fourier series $\sum_k a_k e^{i(k,x)}$,
where $k=(k_1,k_2,k_3)\in \mathbb{Z}^3$,  and  $(k,x)=k_1x_1+k_2x_2+k_3x_3$.
In this notation, we have
\begin{equation}\label{eq-exact1}
P\left(\sum_k a_k e^{i(k,x)}\right)=\sum_{k\ne 0} a_k p(k) e^{i(k,x)},
\end{equation}
where $p(k)$ is the value of the function $p(\xi)$ at $\xi=k$. In this representation, the shift operator is equal to
$$
T\left(\sum_k a_k e^{i(k,x)}\right)=\sum_{k} a_k  e^{i(gk,x)},
$$
since $g$ has a symmetric matrix.  Let $U^{-1}$ be   the operator defined by a formula of the form  \eqref{eq-exact1} using the symbol  $u^{-1}(\xi)$. In this case the mappings 
$$
P:\im TPT^{-1}\to \im P
$$
and
$$
U^{-1}:\im P\to \im TPT^{-1}
$$
are inverse to each other. It follows that the operator $B=T^{-1}U^{-1}P+1-P$ is the inverse of $D_1$. 
The proofs of these statements are similar to the computation   \eqref{ugly1}.
\end{proof}

\subsection{Remark. Special two-term operators as operators in subspaces}

Let us give here a method   of computing  the index of special two-term operators using elliptic theory in
subspaces defined by pseudodifferential projections  (see \cite{SaSt1,SaSt2}).  We write a special two-term operator~\eqref{eq-kvese2} as
\begin{equation}\label{eq-8}
D =QD_0TP+(1-Q)D_1(1-P): C^\infty(M,\mathbb{C}^N)\lra  C^\infty(M,\mathbb{C}^N).
\end{equation}
Without loss of generality, we can assume that $P$ and $Q$ are projections $P^2=P$, $Q^2=Q$.
In this case, the operator $D$ is a direct sum 
$$
D=QD_0TP\oplus (1-Q)D_1(1-P)
$$
of operators acting in subspaces  defined by the projections $P,Q,1-P,1-Q$. Using this decomposition, we can compute the index of 
  $D$. Indeed, we get
\begin{multline}\label{eq-deco11}
\ind D= \ind[QD_0TP:\im P\to \im Q]+\ind[(1-Q)D_1(1-P):\im (1-P)\to\im (1-Q)]=\vspace{1mm}\\
=  \ind[D_0 :\im g^* P\to \im Q]+\ind[D_1:\im (1-P)\to\im (1-Q)].
\end{multline}
Here in the last equality we used the fact that  $T$ defines an isomorphism of the ranges of projections  $P$ and $g^*P=TPT^{-1}$.

An application of  the index formulas obtained in the papers \cite{SaSt1,SaSt2} to the operators in \eqref{eq-deco11} gives an index formula for $D$. To formulate the result, consider the involution $\alpha:T^*M\to T^*M$, $\alpha(x,\xi)=(x,-\xi)$ 
and for a $\psi $DO $A$ let $\alpha^*A$ denote any $\psi$DO with the symbol $\alpha^*\sigma(A)$.
\begin{proposition}
Let a special two-term operator \eqref{eq-8} be elliptic, the manifold $M$ be odd-dimensional, and the projections $P,Q$ be even, i.e., they satisfy the condition
$$
\alpha^*\sigma(P) =\sigma(P ),\quad \alpha^* \sigma(Q) =\sigma(Q).
$$
Then one has an equality
\begin{equation}\label{eq-inda7}
\ind D=\frac 1 2  \ind \left[  D_0( \alpha^* (D_0)^{-1}) Q +
 D_1  (\alpha^* (D_1 )^{-1})(1 -Q)\right],
\end{equation}
where the operator in the square brackets is an elliptic $\psi$DO on   $M$.
\end{proposition}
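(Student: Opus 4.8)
The plan is to combine the decomposition \eqref{eq-deco11} of $D$ into operators acting in subspaces with the index theory of elliptic operators in subspaces developed in \cite{SaSt1,SaSt2}. By \eqref{eq-deco11} it suffices to compute the indices of the two elliptic operators in subspaces
\[
[D_0:\im g^*P\to\im Q],\qquad [D_1:\im(1-P)\to\im(1-Q)]
\]
on the odd-dimensional closed manifold $M$. First I would check that the evenness hypothesis propagates: since $P$ and $Q$ are even and $\partial g$ acts fiberwise-linearly on $T^*M$, hence commutes with the antipodal involution $\alpha$, the projections $g^*P$, $1-P$ and $1-Q$ are again even; this is exactly what is needed for the $\eta$-type index defect in the formula of \cite{SaSt1,SaSt2} to vanish.

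Next I would recall the relevant formula for operators in subspaces. For even pseudodifferential projections $R_1,R_2$ on the odd-dimensional $M$ and an elliptic operator $[A:\im R_1\to\im R_2]$ (meaning $\sigma(A):\im\sigma(R_1)\to\im\sigma(R_2)$ is an isomorphism over $S^*M$), one forms the $\psi$DO $\widehat A$ on $M$ whose symbol equals $\sigma(A)\,\alpha^*\sigma(A)^{-1}$ on $\im\sigma(R_2)$ and the identity on $\im(1-\sigma(R_2))$; the evenness $\alpha^*\sigma(R_i)=\sigma(R_i)$ guarantees that $\alpha^*\sigma(A)^{-1}$ carries $\im\sigma(R_2)$ onto $\im\sigma(R_1)$, so this symbol is an everywhere invertible automorphism, $\widehat A$ is elliptic, and $\ind A=\frac12\ind\widehat A$. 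Applying this to the two summands gives $\ind D_0=\frac12\ind\widehat{D_0}$ and $\ind D_1=\frac12\ind\widehat{D_1}$, where $\widehat{D_0}$ has symbol $\sigma(D_0)\alpha^*\sigma(D_0)^{-1}$ on $\im\sigma(Q)$ and the identity on $\im(1-\sigma(Q))$, while $\widehat{D_1}$ has symbol $\sigma(D_1)\alpha^*\sigma(D_1)^{-1}$ on $\im(1-\sigma(Q))$ and the identity on $\im\sigma(Q)$.

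Finally I would assemble the two halves. Both $\widehat{D_0}$ and $\widehat{D_1}$ are block-diagonal with respect to the splitting $\mathbb{C}^N=\im\sigma(Q)\oplus\im(1-\sigma(Q))$, and each is the identity on the block on which the other is nontrivial; hence by additivity of the index under composition of Fredholm operators one has $\ind\widehat{D_0}+\ind\widehat{D_1}=\ind(\widehat{D_0}\widehat{D_1})$, and the symbol of $\widehat{D_0}\widehat{D_1}$ is
\[
\sigma(D_0)\,\alpha^*\sigma(D_0)^{-1}\,\sigma(Q)+\sigma(D_1)\,\alpha^*\sigma(D_1)^{-1}\,(1-\sigma(Q)),
\]
which is precisely the (everywhere invertible) symbol of the $\psi$DO appearing in the square brackets of \eqref{eq-inda7}; in particular that operator is elliptic. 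Since the index of a $\psi$DO depends only on its symbol, $\ind(\widehat{D_0}\widehat{D_1})$ equals the index of the bracketed operator, so that, combining with \eqref{eq-deco11},
\[
\ind D=\ind D_0+\ind D_1=\frac12\bigl(\ind\widehat{D_0}+\ind\widehat{D_1}\bigr)=\frac12\ind\bigl[D_0(\alpha^*(D_0)^{-1})Q+D_1(\alpha^*(D_1)^{-1})(1-Q)\bigr].
\]

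The main obstacle is the correct invocation of the subspace index formula of \cite{SaSt1,SaSt2}: one has to verify that the precise hypotheses under which its $\eta$-invariant defect term vanishes — odd dimension together with evenness of all the projections involved, including $g^*P$ — are satisfied here, and one has to fix the interpretation of $(D_0)^{-1}$ and $(D_1)^{-1}$ as the almost-inverses of the respective operators in subspaces so that $D_0\alpha^*(D_0)^{-1}$ and $D_1\alpha^*(D_1)^{-1}$ really carry the symbols used above; once this is pinned down, the remaining symbol bookkeeping is routine.
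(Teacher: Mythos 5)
Your overall route is the same as the paper's: decompose $D$ via \eqref{eq-deco11} into two elliptic operators in subspaces, apply the index formula of \cite{SaSt1} to each summand, and reassemble. But there is a genuine gap in the middle step. You assert that for an elliptic $[A:\im R_1\to\im R_2]$ with $R_1,R_2$ even on odd-dimensional $M$ one has the clean equality $\ind A=\tfrac12\ind\widehat A$, and you attribute the vanishing of the "$\eta$-type defect" to the evenness of the projections. That is not what the theorem of \cite{SaSt1} says. The actual formula reads
\[
\ind\bigl(A:\im R_1\to\im R_2\bigr)=\tfrac12\,\ind\widehat A\;+\;d(R_1)-d(R_2),
\]
where $d$ is the homotopy invariant of \emph{even} pseudodifferential projections defined in that paper; evenness and odd-dimensionality are what make $d$ defined at all, not what make the correction $d(R_1)-d(R_2)$ vanish. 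For the two summands this gives extra terms $d(g^*P)-d(Q)$ and $d(1-P)-d(1-Q)$, and neither vanishes for a general pair $(P,Q)$.

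What saves the final result — and what your proposal omits — is that these corrections cancel only after summing, using two nontrivial properties of the $d$-functional established in \cite{SaSt1}: the dichotomy $d(R)+d(1-R)=0$ and the diffeomorphism invariance $d(g^*P)=d(P)$. With these, $(d(g^*P)+d(1-P))-(d(Q)+d(1-Q))=(d(P)+d(1-P))-(d(Q)+d(1-Q))=0$, and the rest of your assembly (commuting block-diagonal symbols, additivity of the index under composition) goes through exactly as you wrote it. So the conclusion is right, and your symbol bookkeeping at the end is correct, but the intermediate claim $\ind A=\tfrac12\ind\widehat A$ is false as stated and must be replaced by the defect formula together with the cancellation argument.
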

\begin{proof}
1. Application of the index formula from the paper~\cite{SaSt1} to the operators $D_0 :\im g^* P\to \im Q$
and $D_1:\im (1-P)\to\im (1-Q)$ gives us
\begin{equation}\label{eq-sub1}
\ind (D_0 :\im g^* P\to \im Q)=\frac 1 2 \ind \left[  D_0( \alpha^* (D_0)^{-1}) Q +(1-Q)\right]+d(g^*P)-d(Q),
\end{equation}
\begin{multline}\label{eq-sub2}
\ind (D_1 :\im (1-P)\to \im (1-Q))=\\
=\frac 1 2 \ind \left[ Q+ D_1( \alpha^* (D_1)^{-1})  (1-Q)\right]+d(1-P)-d(1-Q),
\end{multline}
where $d$ is the  homotopy invariant of even pseudodifferential projections constructed in~\cite{SaSt1}.
Adding the last two expressions~\eqref{eq-sub1} and \eqref{eq-sub2}, we obtain the following expression for the index of $D$:
\begin{multline}\label{eq-medi1}
\ind D=\frac 1 2 \ind \left[  D_0( \alpha^* (D_0)^{-1}) Q +
 D_1  (\alpha^* (D_1))^{-1})(1 -Q)\right]+\\
+(d(g^*P)+d(1-P))-(d(Q)+d(1-Q)).
\end{multline}
The cited paper contains the following properties of the functional $d$: 
$$
 d(Q)+d(1-Q)=0\quad \text{and}\quad d(g^*P)=d(P).
$$  
Hence, the last two terms in Eq.~\eqref{eq-medi1}  are equal to zero and we obtain the desired index formula   \eqref{eq-inda7}.
\end{proof}
There is an analog of this proposition for so-called odd projections on even-dimensional manifolds   (see \cite{SaSt2}).

\subsection{Remark. A generalization of the notion of ellipticity}

In \cite{AnLe1,AnLe2}, a different condition of ellipticity of operators~\eqref{eq-oper1} is used. This condition does not require that the number of nonzero components of the inverse symbol is finite. In this situation, the symbol   is naturally   an element of the $C^*$-crossed product  $C(S^*M)\rtimes\mathbb{Z}$ (see~\cite{Zell1}) of the algebra of continuous symbols on  $S^*M$ by the action of the diffeomorphism $g$, and the ellipticity is just the invertibility in this $C^*$-crossed product. On the other hand, it was shown in the papers  \cite{SaSt25,Sav9} that an elliptic operator in this sense is stably homotopic to an operator elliptic in the sense of Definition~\ref{def-ell-1}. This implies that to obtain an index formula for this class of operators, it suffices to extend the cyclic cocycle   $\td\in HP^*(C^\infty(S^*M\times\mathbb{S}^1)\rtimes \mathbb{Z})$, see~\eqref{eq-ind-forla1} to some local algebra $\mathcal{A}$ such that
$$
 C^\infty(S^*M)\rtimes \mathbb{Z}\subset\mathcal{A}\subset C(S^*M)\rtimes \mathbb{Z}
$$ 
or, in more invariant form, to define a class $\overline{\td}\in HP^*(\mathcal{A})$ that is the pull-back of the class $\td\in HP^*( C^\infty(S^*M\times\mathbb{S}^1)\rtimes \mathbb{Z})$ under the embedding $ C^\infty(S^*M\times\mathbb{S}^1)\rtimes \mathbb{Z}\subset  \mathcal{A}$.

Such extentions are known for many interesting  classes of diffeomorphisms, for example see \cite{Bost1,Con8,Schwe1,SaSt25}.
Therefore,  we obtain an index formula of the type   \eqref{eq-ind-forla1} for operators elliptic in the sense of \cite{AnLe1}  for these classes of diffeomorphisms.


\addcontentsline{toc}{section}{References}

\hfill {\em Hannover-Moscow}

\end{document}